\pgfplotsset{compat=newest}
\DeclareMathAlphabet{\dutchcal}{U}{dutchcal}{m}{n}
\newcolumntype{L}[1]{>{\raggedright\let\newline\\\arraybackslash\hspace{0pt}}m{#1}}
\newcolumntype{C}[1]{>{\centering\let\newline\\\arraybackslash\hspace{0pt}}m{#1}}
\newcolumntype{R}[1]{>{\raggedleft\let\newline\\\arraybackslash\hspace{0pt}}m{#1}}
\newcommand*\angles[1]{\langle #1 \rangle}
\newtheoremstyle{theoremstyle}
{10pt}      %
{5pt}       %
{\itshape}  %
{}          %
{\bfseries} %
{}         %
{ }      %
{}          %
\newtheoremstyle{algorithmstyle}
{10pt}      %
{5pt}       %
{}  %
{}          %
{\bfseries} %
{}         %
{ }      %
{}          %
\newtheoremstyle{examplestyle}
{10pt}      %
{5pt}       %
{}          %
{}          %
{\bfseries} %
{}         %
{ }      %
{}          %
\newsavebox{\tempbox}
\newcommand{\textbox}[1]%
{\savebox{\tempbox}{#1}%
 \ifdim\wd\tempbox<7cm\relax
   \makebox[7cm]{\usebox{\tempbox}}%
 \else
   \parbox{7cm}{\raggedright #1}%
 \fi}
\newtheorem*{rep@theorem}{\rep@title}
\newcommand{\newreptheorem}[2]{%
\newenvironment{rep#1}[1]{%
 \def\rep@title{#2 \ref{##1}}%
 \begin{rep@theorem}}%
 {\end{rep@theorem}}}
\newcommand{\subalign}[1]{%
  \vcenter{%
    \Let@ \restore@math@cr \default@tag
    \baselineskip\fontdimen10 \scriptfont\tw@
    \advance\baselineskip\fontdimen12 \scriptfont\tw@
    \lineskip\thr@@\fontdimen8 \scriptfont\thr@@
    \lineskiplimit\lineskip
    \ialign{\hfil$\m@th\scriptstyle##$&$\m@th\scriptstyle{}##$\hfil\crcr
      #1\crcr
    }%
  }%
}
\xpatchcmd{\@todo}{\setkeys{todonotes}{#1}}{\setkeys{todonotes}{inline,#1}}{}{}
\theoremstyle{theoremstyle}
\newtheorem{theorem}{Theorem}[section]
\newtheorem{lemma}[theorem]{Lemma}
\newtheorem{proposition}[theorem]{Proposition}
\theoremstyle{examplestyle}
\newtheorem{example}[theorem]{Example}
\newtheorem{definition}[theorem]{Definition}
\newtheorem*{notation*}{Notation}
\newtheorem{remark}[theorem]{Remark}
\newcommand{\CC}{\mathbb{C}}
\newcommand{\Q}{\mathbb{Q}}
\newcommand{\Z}{\mathbb{Z}}
\newcommand{\suchthat}{\;\ifnum\currentgrouptype=16 \middle\fi|\;}
\newcommand{\an}{{\mathrm{an}}}
\newcommand{\sep}{\text{sep}}
\newcommand{\bs}{\backslash}
\DeclareMathOperator{\Spec}{Spec}
\definecolor{ududff}{rgb}{0.30196078431372547,0.30196078431372547,1}
\definecolor{qqqqff}{rgb}{0,0,1}
\definecolor{qqccqq}{rgb}{0,0.8,0}
\definecolor{ffqqtt}{rgb}{1,0,0.2}
\definecolor{wwzzff}{rgb}{0.4,0.6,1}
\definecolor{ffxfqq}{rgb}{1,0.4980392156862745,0}
\definecolor{ffqqqq}{rgb}{1,0,0}
\definecolor{ududff}{rgb}{0.30196078431372547,0.30196078431372547,1}
\definecolor{zzttqq}{rgb}{0.6,0.2,0}
\definecolor{aureolin}{rgb}{0.99, 0.93, 0.0}
\definecolor{ffffff}{rgb}{1,1,1}%
\newcommand\restr[2]{{\left.\kern-\nulldelimiterspace #1 \right|_{#2}}}
\newcommand{\sh}{\mathrm{sh}}
\newcommand{\h}{\mathrm{h}}
\newcommand{\val}{\mathrm{val}}
\mathchardef\mhyphen="2D
\begin{document}

\title[An algorithm for power series and %
dual intersection graphs]{A symbolic algorithm for calculating \\
power series expansions and  \\
dual intersection graphs of semistable models}%

\author{Paul Alexander Helminck}
\address{Department of Mathematics, Tohoku University, Japan.}

\subjclass[2020]{11G25,14G22, 13P05, 14Q25}

\date{\today}

\keywords{Dual intersection graphs, semistable models, posets, power series}

\begin{abstract}
In this paper we develop a symbolic algorithm to calculate multivariate power series expansions of univariate polynomials over general base rings. We use this to give a complete power series algorithm to calculate the dual intersection graph of a semistable model of a curve over a non-archimedean field. 

We first study the problem of recovering the relative poset structure of a finite covering $X'\to X$ of 
normal, relatively unibranch, Noetherian connected schemes. We show that we can reconstruct the poset structure of $X'$ in terms of group-theoretic data over the base $X$. This group-theoretic data consists of glued double cosets, and we show how these can be interpreted in terms of glued power series approximations. We then show how our algorithms calculate these glued power series approximations, so that we can work with the branches of normalizations $X'\to X$ without calculating integral closures. These algorithms have been implemented in OSCAR. %
We give a detailed study of the key steps in these algorithms for coverings of semistable models, with various examples to illustrate the non-trivial gluing phenomena. We conclude by interpreting these techniques in the context of analytic spaces, with an eye towards future applications in $p$-adic integration theory.

\end{abstract}

\maketitle

\vspace{-0.9cm}
\section{Introduction}

Algebraic varieties are often given as a finite branched covering of a simpler one.  
In this paper, we study the problem of symbolically calculating the branches of such a finite covering.
For instance, in the classical case of an algebraic curve $X$ over $\CC$, the branches of a covering $X'\to X$ can be represented by %
one-dimensional power series expansions 
\begin{equation*}
z(t)=\sum_{i=r}^{\infty}c_{i}t^{i/n},
\end{equation*}        
where $t$ is some local uniformizer on $X$, $n$ is the local ramification index, $r\in\mathbb{Z}$, and $c_{i}\in\mathbb{C}$. The corresponding algorithm that calculates these power series expansions is commonly known as the \emph{Newton-Puiseux algorithm}.   %

The main goal in this paper is to find a suitable analogue of the Newton-Puiseux algorithm for higher-dimensional schemes defined over general base rings. For instance, consider the polynomial 
\begin{equation}\label{eq:MainHypersurface}
f=z^3 + 2z^2 - zu^4v^2 - zv + z - v.
\end{equation}
We can view this as a univariate polynomial in $z$ over the base ring $A=\mathbb{Q}[u,v]$. Its zero set $V(f)$ defines a surface in $\mathbb{A}^{3}=\Spec(\mathbb{Q}[u,v,z])$, and the projection map onto $(u,v)$ %
defines a $3:1$ covering map $V(f)\to X=\mathbb{A}^{2}$. The corresponding hypersurface can be found in \cref{fig:Surface}. 

\begin{figure}[h]
\scalebox{0.15}{
\includegraphics{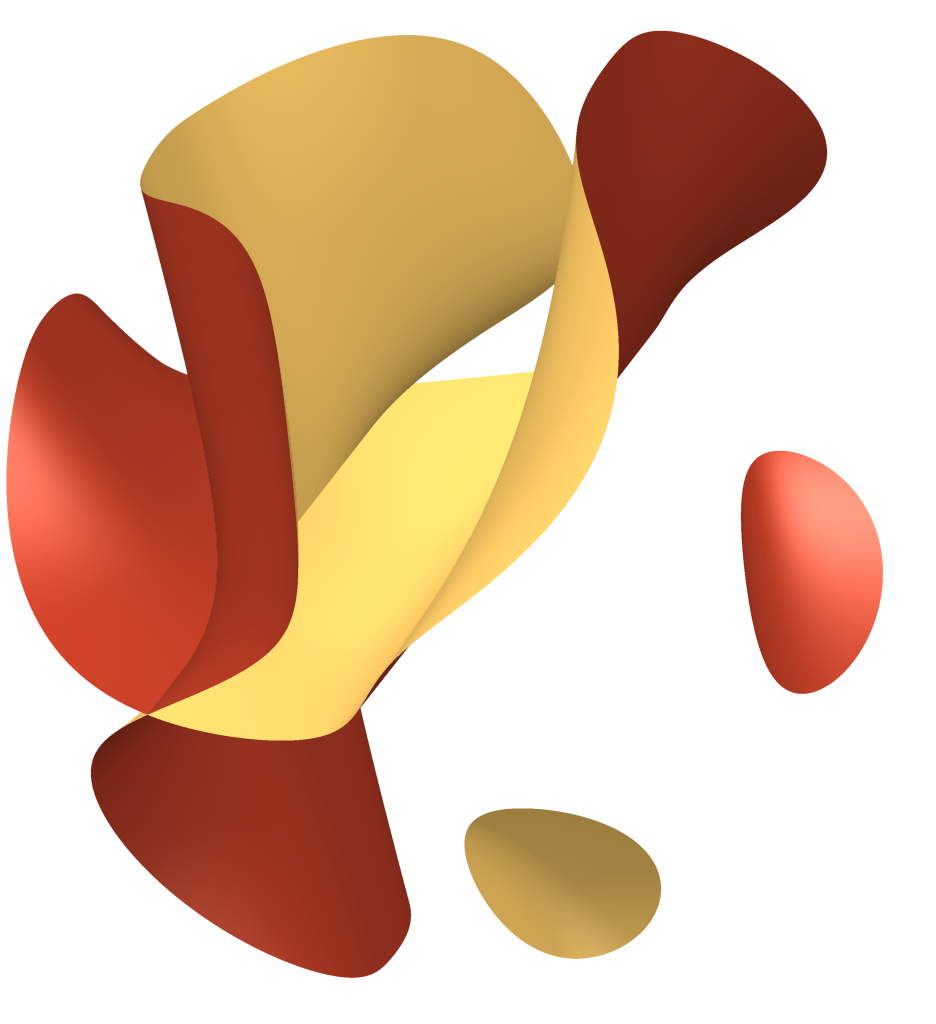}
}
\caption{\label{fig:Surface}The hypersurface defined by the polynomial $f(u,v,z)$ in \cref{eq:MainHypersurface}. The projection map $(u,v,z)\mapsto(u,v)$ induces a $3:1$ covering of the affine plane. }
\end{figure}
In this case, we can again find an infinite power series representation near $(0,0)$ for the three branches of this covering: %
\begin{equation*}
z=\sum_{(i,j)\in\mathbb{N}^{2}}c_{i,j}u^{i}v^{j}
\end{equation*}
Here the $c_{i,j}$ are defined over an algebraic closure of $\mathbb{Q}$. The generalized Newton-Puiseux algorithm in this paper allows us to compute this representation symbolically. See \cref{exa:PlaneQuartic} and the accompanying \texttt{Example4.12.jl} for the computations.  %

 \vspace{0.25cm}

\subsection{Motivation} 
Our motivation for finding these power series representations comes from number theory, as we explain now.    
Let $X$ be a smooth, geometrically connected projective curve over a local field $K$ of characteristic zero with valuation ring $\mathcal{O}_{K}$. By the semistable reduction theorem \cite{DM69}, $X$ admits a semistable model $\mathcal{X}/\mathcal{O}_{K}$ %
after a finite extension of $K$. The dual intersection graph $\Sigma(\mathcal{X})$ of the special fiber of this model gives a combinatorial representation of the way the irreducible components of $\mathcal{X}_{s}$ intersect. For a set of dual intersection graphs coming from curves of genus $3$, see \cref{fig:Covering}. The dual intersection graph of a semistable model contains various important arithmetic invariants (see \cite{DDMM2023} for the case of hyperelliptic curves), and for applications to Diophantine equations it is important to be able to explicitly calculate with these models $\mathcal{X}$ and their associated graphs $\Sigma(\mathcal{X})$.  Recent examples of these computations can be found in %
\cite{KK2022} and \cite{BRHS2024}, where they played important roles in carrying out the Chabauty-Coleman method at primes of bad reduction.  %

One of the main motivations for constructing generalized Newton-Puiseux algorithms for higher-dimensional schemes in this paper is to obtain a complete algorithm  
for calculating dual intersection graphs. 
To that end, we first choose a finite branched cover 
\begin{equation*}
\phi: X\to \mathbb{P}^{1}.
\end{equation*}
Algebraically, this map corresponds to a choice of a non-constant function $f\in K(X)$ by the equivalence between the category of finite function field extensions and the category of finite morphisms of normal curves. 
If $X$ is for instance given as the smooth completion of a plane curve $C=V(F(x,y))$, then one often takes $f$ to be $x$ or $y$. To deduce a semistable model from this covering, we can use the 
simultaneous semistable reduction theorem \cite[Theorem 4.5]{Liu2006}\cite[Theorem A]{ABBR2015}. This says that  
we can find a semistable model $\mathcal{Y}$ of $\mathbb{P}^{1}$ and a finite extension $K'$ of $K$ such that the normalized base change of $\mathcal{Y}$ in $K'(X)$ is semistable.
If the covering $\phi$ for instance is sufficiently tame, then it suffices to take a semistable model $\mathcal{Y}$ of the marked curve $(\mathbb{P}^{1},D)$, where $D$ is the branch locus, see \cref{pro:SimultaneousSemistable} for a precise statement. In general, finding this model $\mathcal{Y}$ and the corresponding extension $K\subset K'$ is quite difficult due to the presence of wild ramification, see \cite{Ossen24}, \cite{Ossen25}, \cite{AW12}, \cite{OW16}, \cite{CTT16} and \cite{BT20}. 

Let $\phi_{\mathcal{O}_{K}}:\mathcal{X}\to \mathcal{Y}$ be a morphism of semistable models arising from one of the procedures above. Write $S_{\mathcal{Y}}\subset \mathcal{Y}$ for the union of the set of generic points of the special fiber $\mathcal{Y}_{s}$ and the set of  intersection points of the corresponding components. We view these as points of $\mathcal{Y}$ through the closed immersion $\mathcal{Y}_{s}\to \mathcal{Y}$. To find the dual intersection graph $\Sigma(\mathcal{X})$ of $\mathcal{X}$, it then suffices to know the poset structure of the inverse image $\phi^{-1}_{\mathcal{O}_{K}}(S_{\mathcal{Y}})=:S_{\mathcal{X}}$. As a first approximation, we calculate the fibers over the generic points $\eta$ of $\mathcal{Y}_{s}$.
Since the corresponding local rings are discrete valuation rings, we can compute these fibers with an adaptation of the classical Newton-Puiseux algorithm. Next, we consider a pair $(\eta,z)$, where $\eta$ is as above and $z$ is an intersection point of $\mathcal{Y}_{s}$, corresponding to an edge of the dual intersection graph $\Sigma(\mathcal{Y})$. This pair gives rise to a rank-$2$ valuation on $K(Y)$. Using an adaptation of the MacLane algorithm \cite{MacLane36}\cite{Ruth2015}, we can then reconstruct extensions of this pair $(\eta,z)$\footnote{This has been implemented in SageMath in the form of the package MCLF.}.     

\vspace{0.1cm}

We now consider the following natural problem: %
\vspace{0.15cm}
\begin{center}
Is the extension data for rank-$2$ valuations sufficient to recover the graph $\Sigma(\mathcal{X})$?
\end{center}
\vspace{0.05cm}
In many cases, the answer is yes. For instance, for superelliptic curves this is true by \cite[Lemma 3.9]{H2022}. In general however, the answer is no. Namely, consider the three coverings of metric graphs in \cref{fig:Covering}. By the lifting results in \cite{ABBR2015}, all of these coverings are realizable. Moreover, their local structures are the same, so that they give the same rank-$2$ data. %
Note that one cannot circumvent this problem by subdividing the edge using a blow-up, as this gives multiple instances of the same problem. What is missing here %
is a way to glue the branches of different neighboring rank-$2$ valuations. One of the main goals of this paper is to solve these types of gluing problems, both from a theoretical and algorithmic point of view. 

Our main theoretical result is a group-theoretic reconstruction algorithm for the relative poset structure of a finite covering $\phi: X'\to X$ of normal connected Noetherian schemes. That is, given a subset $S\subset X$, we show how the poset $\phi^{-1}(S)$ can be expressed purely in terms of group theory on $X$. We further discuss the details of this result in \cref{sec:TheoreticalOverview}. 

To calculate with this group-theoretic data in practice, we develop a general symbolic algorithm to calculate multivariate power series expansions of univariate polynomials $f(z)$. This can be seen as a multivariate generalization of the classical Newton-Puiseux algorithm (see the proof of \cite[Theorem 2.1.5]{MS15}) that calculates the branches of a plane curve. We discuss the precise form of this algorithm in \cref{sec:AlgorithmicAspects}. We have implemented this algorithm in OSCAR.

  \begin{figure}[ht]
 \begin{minipage}{0.3\textwidth}
\scalebox{0.33}{
\begin{tikzpicture}[line cap=round,line join=round,>=triangle 45,x=1cm,y=1cm]
\clip(2,0) rectangle (13,11);
\draw [line width=1.5pt] (6,2)-- (4,4);
\draw [line width=1.5pt] (6,2)-- (3,1);
\draw [line width=1.5pt] (6,2)-- (9,2);
\draw [line width=1.5pt] (9,2)-- (12,2);
\draw [line width=1.5pt] (6,10)-- (9,8);
\draw [line width=1.5pt] (6,8)-- (9,8);
\draw [line width=1.5pt] (9,8)-- (12,8);
\draw [line width=1.5pt] (9,8)-- (12,6);
\draw [line width=1.5pt] (12,6)-- (9,6);
\draw [line width=1.5pt] (9,6)-- (6,6);
\draw [line width=1.5pt] (6,10)-- (3,9);
\draw [line width=1.5pt] (2.98,6.84)-- (6,8);
\draw [line width=1.5pt] (6,6)-- (2.98,6.84);
\draw [line width=1.5pt] (6,10)-- (4,10);
\draw [line width=1.5pt,loosely dashed] (6,8)-- (4,10);
\draw [line width=1.5pt,loosely dashed] (6,6)-- (4,8);
\draw (8.75,9.25) node[anchor=north west] {\Large{$1$}};
\draw [->,line width=1.5pt] (7.48,4.98) -- (7.48,3);
\begin{scriptsize}
\draw [fill=qqqqff] (6,2) circle (5.0pt); %
\draw [fill=qqccqq] (4,4) circle (5.0pt);
\draw [fill=ffqqtt] (3,1) circle (5.0pt);  %
\draw [fill=qqqqff] (9,2) circle (5.0pt);  %
\draw [fill=qqqqff] (12,2) circle (5.0pt); %
\draw [fill=qqqqff] (6,10) circle (5.0pt); %
\draw [fill=qqqqff] (9,8) circle (5.0pt); %
\draw [fill=qqqqff] (6,8) circle (5.0pt); %
\draw [fill=qqqqff] (12,8) circle (5.0pt); %
\draw [fill=qqqqff] (12,6) circle (5.0pt); %
\draw [fill=qqqqff] (9,6) circle (5.0pt);
\draw [fill=qqqqff] (6,6) circle (5.0pt); %
\draw [fill=ffqqtt] (3,9) circle (5.0pt);  %
\draw [fill=ffqqtt] (3,6.84) circle (5.0pt); %
\draw [fill=qqccqq] (4,10) circle (5.0pt); %
\draw [fill=qqccqq] (4,8) circle (5.0pt);
\end{scriptsize}
\end{tikzpicture}
}

\end{minipage}
\begin{minipage}{0.3\textwidth}
\scalebox{0.33}{
\begin{tikzpicture}[line cap=round,line join=round,>=triangle 45,x=1cm,y=1cm]
\clip(2,0) rectangle (13,11);
\draw [line width=1.5pt] (6,2)-- (4,4);
\draw [line width=1.5pt] (6,2)-- (3,1);
\draw [line width=1.5pt] (6,2)-- (9,2);
\draw [line width=1.5pt] (9,2)-- (12,2);
\draw [line width=1.5pt] (6,10)-- (9,8);
\draw [line width=1.5pt] (6,8)-- (9,8);
\draw [line width=1.5pt] (12,6)-- (9,6);
\draw [line width=1.5pt] (9,6)-- (6,6);
\draw [line width=1.5pt] (6,10)-- (3,9);
\draw [line width=1.5pt] (2.98,6.84)-- (6,8);
\draw [line width=1.5pt] (6,6)-- (2.98,6.84);
\draw [line width=1.5pt] (6,10)-- (4,10);
\draw [line width=1.5pt,loosely dashed] (6,8)-- (4,10);
\draw [line width=1.5pt,loosely dashed] (6,6)-- (4,8);
\draw (8.83077103751822,9.611308443048706) node[anchor=north west] {\Large{$1$}};
\draw [->,line width=1.5pt] (7.48,4.98) -- (7.48,3);
\draw [shift={(10.5,6.5)},line width=1.5pt]  plot[domain=0.7853981633974483:2.356194490192345,variable=\t]({1*2.121320343559643*cos(\t r)+0*2.121320343559643*sin(\t r)},{0*2.121320343559643*cos(\t r)+1*2.121320343559643*sin(\t r)});
\draw [shift={(10.5,9.5)},line width=1.5pt]  plot[domain=3.9269908169872414:5.497787143782138,variable=\t]({1*2.121320343559643*cos(\t r)+0*2.121320343559643*sin(\t r)},{0*2.121320343559643*cos(\t r)+1*2.121320343559643*sin(\t r)});
\begin{scriptsize}
\draw [fill=qqqqff] (6,2) circle (5.0pt); %
\draw [fill=qqccqq] (4,4) circle (5.0pt);
\draw [fill=ffqqtt] (3,1) circle (5.0pt);  %
\draw [fill=qqqqff] (9,2) circle (5.0pt);  %
\draw [fill=qqqqff] (12,2) circle (5.0pt); %
\draw [fill=qqqqff] (6,10) circle (5.0pt); %
\draw [fill=qqqqff] (9,8) circle (5.0pt); %
\draw [fill=qqqqff] (6,8) circle (5.0pt); %
\draw [fill=qqqqff] (12,8) circle (5.0pt); %
\draw [fill=qqqqff] (12,6) circle (5.0pt); %
\draw [fill=qqqqff] (9,6) circle (5.0pt);
\draw [fill=qqqqff] (6,6) circle (5.0pt); %
\draw [fill=ffqqtt] (3,9) circle (5.0pt);  %
\draw [fill=ffqqtt] (3,6.84) circle (5.0pt); %
\draw [fill=qqccqq] (4,10) circle (5.0pt); %
\draw [fill=qqccqq] (4,8) circle (5.0pt);
\end{scriptsize}
\end{tikzpicture}
}

\end{minipage}
\begin{minipage}{0.3\textwidth}
\scalebox{0.33}{
\begin{tikzpicture}[line cap=round,line join=round,>=triangle 45,x=1cm,y=1cm]
\clip(2,0) rectangle (13,11);
\draw [line width=1.5pt] (6,2)-- (4,4);
\draw [line width=1.5pt] (6,2)-- (3,1);
\draw [line width=1.5pt] (6,2)-- (9,2);
\draw [line width=1.5pt] (9,2)-- (12,2);
\draw [line width=1.5pt] (6,10)-- (9,8);
\draw [line width=1.5pt] (6,8)-- (9,8);
\draw [line width=1.5pt] (12,6)-- (9,6);
\draw [line width=1.5pt] (9,6)-- (6,6);
\draw [line width=1.5pt] (6,10)-- (3,9);
\draw [line width=1.5pt] (6,6)-- (2.98,6.84);
\draw [line width=1.5pt] (6,10)-- (4,10);
\draw [line width=1.5pt,loosely dashed] (6,6)-- (4,8);
\draw (8.83077103751822,9.611308443048706) node[anchor=north west] {\Large{$1$}};
\draw [->,line width=1.5pt] (7.48,4.98) -- (7.48,3);
\draw [shift={(10.5,6.5)},line width=1.5pt]  plot[domain=0.7853981633974483:2.356194490192345,variable=\t]({1*2.121320343559643*cos(\t r)+0*2.121320343559643*sin(\t r)},{0*2.121320343559643*cos(\t r)+1*2.121320343559643*sin(\t r)});
\draw [shift={(10.5,9.5)},line width=1.5pt]  plot[domain=3.9269908169872414:5.497787143782138,variable=\t]({1*2.121320343559643*cos(\t r)+0*2.121320343559643*sin(\t r)},{0*2.121320343559643*cos(\t r)+1*2.121320343559643*sin(\t r)});
\draw [line width=1.5pt] (2.98,6.84)-- (6,8);
\draw [line width=1.5pt] (4,8)-- (6,8);
\begin{scriptsize}
\draw [fill=qqqqff] (6,2) circle (5.0pt); %
\draw [fill=qqccqq] (4,4) circle (5.0pt);
\draw [fill=ffqqtt] (3,1) circle (5.0pt);  %
\draw [fill=qqqqff] (9,2) circle (5.0pt);  %
\draw [fill=qqqqff] (12,2) circle (5.0pt); %
\draw [fill=qqqqff] (6,10) circle (5.0pt); %
\draw [fill=qqqqff] (9,8) circle (5.0pt); %
\draw [fill=qqqqff] (6,8) circle (5.0pt); %
\draw [fill=qqqqff] (12,8) circle (5.0pt); %
\draw [fill=qqqqff] (12,6) circle (5.0pt); %
\draw [fill=qqqqff] (9,6) circle (5.0pt);
\draw [fill=qqqqff] (6,6) circle (5.0pt); %
\draw [fill=ffqqtt] (3,9) circle (5.0pt);  %
\draw [fill=ffqqtt] (3,6.84) circle (5.0pt); %
\draw [fill=qqccqq] (4,10) circle (5.0pt); %
\draw [fill=qqccqq] (4,8) circle (5.0pt);
\end{scriptsize}
\end{tikzpicture}
}
\end{minipage}
\caption{\label{fig:Covering}
Three coverings of dual intersection graphs coming from plane quartics.
Locally, these coverings are the same, but globally, they are not. See 
 \cref{exa:PlaneQuartic} for an explicit plane quartic over $\Q_{p}$ giving rise to the covering on the left. The colors on the left of each covering illustrate the different branches near the trivalent vertex.  }%
\end{figure}

 \subsection{Theoretical overview of the paper}\label{sec:TheoreticalOverview}

We start the paper in \cref{sec:ReconstructionSection} by laying the necessary theoretical foundations for our algorithms. We give a short overview of the ideas here. %
Fix a normal connected Noetherian base scheme $X$ and a finite subset $S\subset X$. We will assume that $X$ is sufficiently non-singular over $S$: for every pair of elements $x,y\in{S}$ with $x\in V(y)=\overline{\{y\}}$, we require $V(y)$ to be unibranch at $x$ \cite[\href{https://stacks.math.columbia.edu/tag/0BPZ}{Definition 0BPZ}]{stacks-project}. In this case, we say that $X$ is \emph{relatively unibranch} with respect to $S$. For semistable models, this requirement boils down to the model being \emph{strongly semistable}, which can always be achieved after a blow-up and a field extension. %

Let $\phi: X'\to X$ be a finite separable dominant morphism of connected Noetherian schemes of degree $n=[K(X'):K(X)]$, where $K(X)$ and $K(X')$ are the function fields of $X$ and $X'$ respectively. We view $X'$ as generically consisting of $n$ branches over $X$ that possibly come together at various points of $X$. We would now like to glue these branches and reconstruct $X'$ on $X$. We will first explain the local picture in \cref{sec:LocalPart}, and then move on to the global picture in \cref{sec:GlobalPart}. To describe our reconstruction algorithm, we will make use of the following notation: %
\begin{itemize}
\item $L^{\sep}=K(X)^{\sep}$ is a fixed separable closure of $L=K(X)$. 
\item $G=\text{Gal}(L^{\sep}/L)$ is the associated absolute Galois group. 
\item $X^{\sep}\to X$ is the normalization of $X$ in $K(X)^{\sep}$. 
\item $x^{\sep}\in X^{\sep}$ is a point lying over $x$. 
\item $D_{x}:=D_{x^{\sep}/x}=\{\sigma\in{G}:\sigma(x^{\sep})=x^{\sep}\}$. 
\item $H\subset G$ is the subgroup corresponding to an embedding $L'=K(X')\to L^{\sep}$. 
\item $f(z)\in K(X)[z]$ is an irreducible polynomial such that $L[z]/(f(z))\simeq K(X')=L'$. 
\item The $\alpha_{1},...,\alpha_{n}$ are the roots of $f(z)$ in $K(X)^{\sep}$. 
\end{itemize}
\vspace{0.1cm}
Throughout this section we fix an isomorphism $L[z]/(f(z))\to L'\subset L^{\sep}$ sending $z$ to $\alpha_{1}=:\alpha$. We will also write $X'=X_{H}$ and $\phi_{H}:X_{H}\to X$ to emphasize the role of the subgroup. %
We will refer to an element of the fiber $\phi^{-1}_{H}(x)$ as a \emph{branch} of $\phi_{H}: X_{H}\to X$ over $x$. \subsubsection{The local part}\label{sec:LocalPart}
 The main idea at a single point $x\in{S}$ is summarized in the following diagram:
\begin{figure}[h]
\begin{displaymath}
\begin{tikzcd}
\boxed{1}:& \boxed{\text{Local branches of }X'\to X \text{ at }x}\arrow[d,Leftrightarrow]
\\
\boxed{2}:& \boxed{\text{Elements of the double coset space } D_{x}\backslash G/H} \arrow[d,Leftrightarrow]\\
\boxed{3}:& \boxed{D_{x}\mhyphen\text{orbits of the roots }\alpha_{i} \text{ of }f(z)} \arrow[d,Leftrightarrow]\\
\boxed{4}:&\boxed{D_{x}\mhyphen\text{orbits of sufficiently precise approximations }\gamma_{i} \text{ of the roots }\alpha_{i}.}
\end{tikzcd}
\end{displaymath}

\caption{\label{fig:Correspondences} The correspondences between branches, double cosets and power series that we will use throughout this paper. }
\end{figure}

\vspace{0.18cm}

The correspondences in \cref{fig:Correspondences} can be described as follows: %

\begin{itemize}
\item[] $\boxed{2}\Rightarrow \boxed{1}$ There is a bijection $D_{x}\backslash G/H\to H\backslash G/D_{x}$ sending $D_{x}\sigma{H}\mapsto H\sigma^{-1}D_{x}$. We then send $H\sigma^{-1}D_{x}$ to %
image of $\sigma^{-1}(x^{\sep})$ in $X'=X_{H}$ under the map $X^{\sep}\to X_{H}$ arising from the fixed embedding $K(X')=K(X_{H})\to K(X)^{\sep}$. 
\vspace{0.2cm}
\item[] $\boxed{2}\Rightarrow \boxed{3}$ A double coset $D_{x}\sigma H$ is sent to the $D_{x}$-orbit of $\sigma(\alpha)$. Note that $\alpha$ is the image of $z\in K(X)[z]/(f(z))$ under the fixed embedding $K(X)[z]/(f(z))\to K(X)^{\sep}$.   %
\vspace{0.2cm}
\item[] $\boxed{3}\Rightarrow \boxed{4}$ We can assume that the roots $\alpha_{i}$ are integral. After truncating the roots up to a separating height (see Sections \ref{sec:FromTopoiToPowerSeries} and  \ref{sec:PowerSeries}), we then obtain the desired approximations $\gamma_{i}$. 
\end{itemize}
The fact that these give bijections essentially follows from commutative algebra, the orbit-stabilizer theorem and Galois theory. We emphasize here that the \emph{objects of interest} to us are the branches in $\boxed{1}$, and the \emph{objects we can effectively compute with} are the power series expansions in $\boxed{4}$. %
In the second part of the paper, we will show how to calculate these power series expansions, so that we can work with the branches of normalizations $X'\to X$ without computing integral closures.   

\subsubsection{The global part}\label{sec:GlobalPart}

To recover the global poset structure of $\phi_{H}:X_{H}\to X$, we would like to glue instances of the data in \cref{fig:Correspondences} for various points $x\in{S}$. To that end, let $x,y\in{S}$ as before, and suppose that $x$ is a specialization of $y$, so that $x\in V(y)$.  %
If we view $S$ as a graph through its poset structure, then $e=xy$ is an edge in this graph. 
\begin{example}\label{exa:Poset1}
If $X=\Spec(A)$ for $A=K[x_{1},x_{2}]$ and $K$ a field, then we can consider the set $S=\{(x_{1},x_{2}), (x_{1}),(x_{2})\}\subset X$. The graph induced by $S$ consists of two line segments, corresponding to the inclusions $(x_{1},x_{2})\supset (x_{1})$ and $(x_{1},x_{2})\supset (x_{2})$.  

\end{example}  %
If we fix a lifting $e^{\sep}=x^{\sep}y^{\sep}$ of $e$ to $X^{\sep}$ so that $x^{\sep}\in V(y^{\sep})$, then we can introduce the modified decomposition group 
\begin{equation*}
D_{e}=D_{x}\cap D_{y}.
\end{equation*}    
We can then consider the $D_{e}$-orbits of the objects in \cref{fig:Correspondences}. That is, we consider $\boxed{2}$ the $D_{e}$-orbits of $G/H$, $\boxed{3}$ the $D_{e}$-orbits of the roots of $f(z)$, $\boxed{4}$ the $D_{e}$-orbits of compatible $x$-adic and $y$-adic approximations of the roots of $f(z)$\footnote{The exact meaning of compatible approximations will be given in \cref{def:CompatibleSeparatingSets}}.   %
The $D_{e}$-orbits can automatically be compared to their $D_{x}$ and $D_{y}$-orbits, since we have inclusions $D_{e}\subset D_{x},D_{y}$. We show that the diagram of orbits that follows from this allows us to reconstruct the different edges over $e$ for any covering $X_{H}\to X$, see \cref{pro:RecoveringTrees} for the exact statement. More generally, fix a \emph{spanning tree} of the Hasse diagram of $S$, together with a lift of this tree to $X^{\sep}$. We can then reconstruct the poset structure over this spanning tree using the idea above. For every edge outside the spanning tree, we fix a different lifting together with a suitable comparison map. By combining these, we obtain a poset $|S_{H}|$ and a bijection $|S_{H}|\to \phi^{-1}_{H}(S)$. In other words, we can globalize the construction from \cref{sec:LocalPart} and reconstruct the poset structure of $X_{H}$ group-theoretically on $X$. We summarize this in the following theorem.   %
\begin{reptheorem}{thm:MainThmv2}
Let $X$ be a normal connected Noetherian scheme and let $S\subset X$ be a finite subset such that $X$ is relatively unibranch over $S$. 
For every closed subgroup $H$ of $G$ with map of normalizations  %
$\phi_{H}:X_{H}\to X$, there is an isomorphism $|S_{H}|\to \phi_{H}^{-1}(S)$ of partially ordered sets. This association is functorial in $H$. 
\end{reptheorem}

          \begin{remark}
         The reason we need the condition of being relatively unibranch is that one cannot easily classify extensions of poset structures in terms of Galois theory in this case, see \cref{exa:NotUnibranch}. That is, the action of the Galois group on extensions of edges (let alone larger substructures, see \cref{exa:CounterexampleExtensions}) is in general not transitive. It was pointed out to the author by Akio Tamagawa that this situation can be remedied by going to the Zariski-Riemann space of $X$. There, one does have a transitivity result for extensions by \cite[Chapter VI, Paragraph 7, Corollary 3]{ZS76}, which one can then in principle use to generalize the results of this paper. An incarnation of the fact that this problem goes away in this case can also be found in \cite{Helminck2024}, where the Berkovich spaces associated to modular curves are studied. There one also does not need the additional assumption of being relatively unibranch.        
          \end{remark}
          
          \begin{remark}\label{rem:Monodromy}
          We note here that there are two important factors that can cause monodromy in reconstructing the poset structure of $X_{H}$. First, if we change the conjugacy class of an individual subgroup $D_{x}$, then the fiber $\phi^{-1}_{H}(x)$ does not change. %
          However, the global structure of the poset can change, and this is exactly what causes the different possibilities in \cref{fig:Covering}, see \cref{exa:PlaneQuartic} for an explicit example with equations. We call this \emph{algebraic monodromy}. Secondly, if a poset has non-trivial topology, then this can give rise to monodromy as in classical algebraic topology. An explicit example of this can be found in \cref{exa:ExplicitTransferMap}. We call this \emph{topological monodromy}. For our reconstruction algorithm of dual intersection graphs of semistable models using coverings to $\mathbb{P}^{1}$, the latter type of monodromy will not play a role, since the base posets are trees.     %
          \end{remark}

\subsection{The algorithmic aspects}\label{sec:AlgorithmicAspects}

With the correspondences from \cref{fig:Correspondences} in place, we now see that glued power series expansions, together with the induced Galois actions, theoretically allow us to reconstruct the poset structure of a covering of schemes. In other words, glued power series expansions form effective proxies for the branches of the covering $X'\to X$. %
To calculate these power series in practice, we introduce iterated symbolic multivariate Newton-Puiseux algorithms, which generalize the original Newton-Puiseux algorithms found in \cite{Puiseux1850} and \cite{Duval1989}. %

To explain the algorithm, let $A=K[x_{1},...,x_{n}]$ and let $f(z)\in A[z]$. Our main assumption on $f(z)$ is now as follows:
\begin{center}
\underline{Main assumption}\\
\vspace{0.1cm}
\emph{We assume that $f(z)$ splits completely over the \emph{strict Henselization} of $A$ with respect to $\mathfrak{m}=(x_{1},...,x_{n})$.}
\end{center}
We explain the philosphy behind working with the strict Henselization here. In ordinary algebraic geometry, one often finds the need to invert certain functions in $A$ at a point $\mathfrak{m}\in\Spec(A)$. That is, one wishes to work with solutions of equations of the form  
\begin{equation*}
w\cdot g=1,
\end{equation*}
where $g\in{A}$ satisfies $g(\mathfrak{m})\neq{0}$, or equivalently $g\notin\mathfrak{m}$. This can be done by moving to the \emph{localization} $A_{\mathfrak{m}}$ of $A$ with respect to $\mathfrak{m}$. Note that computationally, one can simply work in the finite type algebra %
$A[1/g]\simeq A[w]/(w\cdot g-1)$ whenever one needs to invert $g$. The strictly Henselian case is completely analogous, but one works with more general equations. Namely, one works with solutions of equations of the form 
\begin{equation*}
h=\sum_{i=0}^{r}c_{i}w^{i}=0,
\end{equation*}
where $h$ now satisfies the \emph{implicit function theorem} from calculus. That is, the Jacobian matrix of $h$ has full rank, or in other words: %
the discriminant $\Delta(h)$ satisfies $\Delta(h)(\mathfrak{m})\neq{0}\Leftrightarrow \Delta(h)\notin\mathfrak{m}$. The ring that we obtain in this way is the \'{e}tale localization or \emph{strict Henselization} $A^{sh}_{\mathfrak{m}}$ of $A$ at $\mathfrak{m}$. As with the localization, one can work with finite type algebras of the form $A[w]/(h)$ here instead of the full strict Henselization.     
\begin{remark}
In the literature, one often works with the completion of $A$ at $\mathfrak{m}$ rather than the strict Henselization. This completion however does not allow one to move between different primes $\mathfrak{m}$ and $\mathfrak{p}$ for $\mathfrak{m}\supset \mathfrak{p}$, see \cref{rem:PowerSeriesVSCompletions}.  %
For strict Henselizations, we however do obtain natural morphisms $A^{\sh}_{\mathfrak{m}}\to A^{\sh}_{\mathfrak{p}}$. Note that this is similar to the classical case where we have localization homomorphisms $A_{\mathfrak{m}}\to A_{\mathfrak{p}}$.   
\end{remark}

We now return to our main assumption. Even if a polynomial $f(z)$ does not split over the strict Henselization of our given algebra, we can often modify it so that it does split. %
We will see how to do this %
for coverings of strictly semistable models in \cref{lem:KummerExtensions}. Here we simply allow a suitable Kummer extension of $A$ with respect to the coordinate axes, which again puts us in the scenario mentioned at the beginning of this section. We explain the connection between this modification and the classical Newton-Puiseux theorem in the following remark.      

\begin{remark}
Let $K=\mathbb{C}((t))$ be the field of Laurent power series over $\CC$, and let $f(z)\in K[z]$ be a non-constant polynomial. The Newton-Puiseux theorem (see \cite[Theorem 2.1.5]{MS15}) then tells us that $f(z)$ splits over the finite extension %
$K(t^{1/n})$ for some $n$. %
We can calculate this $n$ in practice %
explicitly using Newton polygons. 

In terms of strict Henselizations, we can interpret this term as follows. Suppose that we started with a polynomial $f(z)$ defined over the ordinary polynomial ring $A=\mathbb{Q}[t]$. A modified version of the Newton-Puiseux theorem then says that $f(z)$ splits completely over the field of fractions of the strict Henselization of $A_{n}=A[t^{1/n}]$ with respect to the ideal $(t^{1/n})$, for $n$ large enough. This essentially follows from the discrete version of Abhyankar's lemma, see \cite[\href{https://stacks.math.columbia.edu/tag/0BRM}{Lemma 0BRM}]{stacks-project}. %
All in all, working with the strict Henselization in this case can be seen as a \emph{symbolic} approach to the classical Newton-Puiseux theorem. %

We now discuss the analogue of these rings and their extensions for semistable models. %
There, one has coordinate rings of the form $A=R[u,v]/(uv-\pi^{m})$, where $\pi$ is a uniformizer of a valued field $K$. In other words, the local equations are of the form $uv=\pi^{m}$. %
One can then similarly construct a \emph{Newton-Puiseux}-type algebra by taking $n$-th roots of the different standard coordinate functions $u$ and $v$ for a sufficiently large $n$. Let $f(z)\in{A[z]}$ again be a non-constant polynomial. Under certain technical assumptions on the branch locus of the covering induced by a polynomial $f(z)$ (which are always met in our scenario), we then again find that $f(z)$ splits completely over the strict Henselization of this algebra, see \cref{lem:KummerExtensions}.    %
In general, this will work whenever our polynomial $f(z)$ satisfies the criteria of Abhyankar's lemma, see \cite[Proposition 5.2]{SGA1}. Put differently, this means that $f(z)$ splits in a \emph{log-geometric \'{e}tale localization}, see \cite{Kato89} and \cite{I2002}. We will not dwell on this matter too much here, but we note that these rings provide the natural \emph{generalized Newton-Puiseux rings} in which our computations take place. In practice however, working in these rings simply means that we take roots of the coordinate functions whenever a certain non-integral slope is encountered in the Newton polygon of $f(z)$. %
\end{remark} %
 Let us return to our algebra $A=K[x_{1},...,x_{n}]$ with maximal ideal $\mathfrak{m}=(x_{1},...,x_{n})$. Let %
 $f(z)$ be a polynomial that splits completely over the strict Henselization of $A$ at $\mathfrak{m}$. %
 We assume without loss of generality that $f(z)$ is monic. 
For every root $\alpha_{i}$ of $f(z)$, we can then write 
\begin{equation*}
\alpha_{i}=\sum_{m\in\mathbb{N}^{n}}c_{m}x^{m},
\end{equation*}
where $c_{m}\in K^{\sep}$, $x^{m}=x_{1}^{i_{1}}\cdot x_{2}^{i_{2}}\cdots x_{n}^{i_{n}}$ and $m=(i_{1},...,i_{n})$. We call this the $(x_{1},...,x_{n})$-adic power series expansions of the roots $\alpha_{i}$ of $f$. Our generalized Newton-Puiseux algorithm calculates the coefficients $c_{m}$ of the roots of $\alpha_{i}$. %
This algorithm forms the base case for various other generalizations, where $A$ is a more arithmetically flavored ring such as $\mathcal{O}_{K,\mathfrak{p}}[x_{1},...,x_{n}]$ and $\mathcal{O}_{K,\mathfrak{p}}$ is as before the localization of a number ring. 

The main ideas behind the discrete Newton-Puiseux algorithm %
are as follows:

\begin{center}
\boxed{\text{Core algorithm behind an individual Newton-Puiseux iteration}}
\end{center}
\begin{enumerate}
\item We calculate the reduction modulo $(x_{n})$ of $f(z)$. We denote this by $\overline{f}$. 
\item If $\overline{f}$ has an irreducible factor of degree larger than one, then we %
add a root $\gamma$ of this equation to our algebra $A$ by setting $A:=A[w]/(g(w))$. Here $g$ is an irreducible polynomial with root $\gamma$\footnote{This new algebra $A$ will be a domain again by the normality of the original $A$, since strict Henselizations of normal domains are normal domains. }. 
\item We then set $f_{1}=f(z+\gamma)$ and calculate the Newton polygon of $f$ with respect to $x_{n}$\footnote{In order for the algorithm to have better integrality properties, we will slightly modify this transformation, see \cref{sec:EtaleRoutine} and \cref{alg:EtaleRoutine}.}. 
\item For every segment of this polygon with slope $r\in\mathbb{N}$, we calculate $f_{2}=1/x_{n}^{c}\cdot f_{1}(x_{n}^{r}z)$, where $c$ is the $x_{n}$-content of $f_{1}(x_{n}^{r}z)$. 
\item Return to the first step. 
\end{enumerate}
Note that this can be seen as a generalized version of the ordinary Newton-Puiseux algorithm. %
In the new version, we calculate the $(x_{n})$-adic power series expansions of the roots of $f(z)$. That is, if we write $\alpha_{i}=\sum_{j=1}^{\infty}d_{i,j}x_{n}^{j}$, then it calculates the $d_{i,j}$. The $d_{i,j}$ turn out to be defined over the strict Henselization of $B=K[x_{1},...,x_{n-1}]$ with respect to $\mathfrak{p}=(x_{1},...,x_{n-1})$, see \cref{lem:IntegralityPowerSeries}. We can now reapply the algorithm above with $B$ and $x_{n-1}$ rather than $A$ and $x_{n}$. As before, this gives the $x_{n-1}$-adic expansions of the $d_{i,j}$, and the coefficients of this expansion are defined over the strict Henselization of $K[x_{1},...,x_{n-2}]$ with respect to $(x_{1},...,x_{n-2})$. By continuing in this way, we eventually end up with a polynomial in $K[x_{1},x_{2}]$, at which point the algorithm reduces to the classical Newton-Puiseux algorithm. By collecting the data from all these computations, we thus see that we are calculating a coefficient $c_{m}$ for every monomial $x^{m}$, where $m\in\mathbb{N}^{n}$.   %
We show in \cref{thm:MainThmNumber2} that this %
algorithm correctly computes the $\mathfrak{m}$-adic power series expansions of the roots of $f$ for $\mathfrak{m}=(x_{1},...,x_{n})$.    
\begin{reptheorem}{thm:MainThmNumber2}
Let $f(z)$ be a non-constant polynomial over $A=K[x_{1},...,x_{n}]$ that splits completely over the strict Henselization of $A$ with respect to $\mathfrak{m}=(x_{1},...,x_{n})$. 
By iteratively applying \cref{alg:DiscreteNPAlgorithm} to the minimal polynomials of the $x_{k}$-adic coefficients for $1\leq{k}\leq{n}$, we obtain the %
$\mathfrak{m}$-adic power series expansions of the roots $\alpha_{i}$ of $f(z)$ up to any height. 
\end{reptheorem}

We emphasize one important fact about this algorithm here. We say that an extension of algebras $A\to B$ is monogenic if $B\simeq A[z]/(f(z))$ for some univariate polynomial. In general, the extensions of algebras coming from the normalization morphism $X'\to X$ associated to a finite extension of function fields $K(X)\to K(X')$ is not monogenic, not even locally. This is for instance one of the main reasons that calculating with rings of integers of number fields is very costly in practice. One of the benefits of our algorithm is that it only relies on monogenic data, and it does not require the calculation of the full normalization of a scheme in a finite extension. %
\begin{remark}
We note that for our algorithms regarding dual intersection graphs, we need the intermediate power series data from \cref{thm:MainThmNumber2} and \cref{alg:DiscreteNPAlgorithm} as well, since these show how the intermediate primes split. 
\end{remark}

We illustrate the algorithm arising from \cref{thm:MainThmNumber2} in \cref{sec:CoveringsSemistableModels} by giving a full algorithm to calculate the map of dual intersection graphs associated to a sufficiently tame morphism of semistable models of curves over a discretely valued field $K$%
\footnote{As mentioned before, the techniques in \cite{Ruth2015} and MCLF using the Maclane-algorithm seem to only fully calculate the map of dual intersection graphs in cases where the covering is not too split. For instance, for the curve in \cref{exa:MainExample2}, the output of MCLF is enough to reconstruct the graph, but for the one in \cref{exa:PlaneQuartic}, it is not. }. This algorithm does not work with the equations of a model of the curve whose dual intersection graph we wish to find, but rather implicitly works with {{proxies}} that allow us to bypass these computationally expensive normalizations. To illustrate this, consider a plane curve $C$ given by an equation $f(x,y)=0$ with smooth completion $X$. We take the map $X\to \mathbb{P}^{1}$ induced by projection onto the $x$-axis. Correspondingly, we view %
$f(x,y)$ as a univariate polynomial in $y$ over the function field $K(x)$. %
The proxies mentioned above are then glued power series expansions for the roots $y_{1},...,y_{n}$ of this polynomial over the points of a semistable model $\mathcal{Y}$ of $\mathbb{P}^{1}$. Here $n$ is the $y$-degree of $f(x,y)$. The gluing maps between these locally defined power series expansions can be non-trivial, as  \cref{exa:PlaneQuartic} shows.    %
We note here that in the case where $f(x,y)$ defines a hyperelliptic or superelliptic curve, our algorithm becomes very similar to the ones described in \cite{BH2020}, \cite{H2022}, \cite{DDMM2023} and \cite{Dokchitser2021}. %

In \cref{sec:Examples}, 
we show in several examples %
how our algorithm works in practice,  %
and we give a sketch in \cref{sec:InterpretationAnalyticSpaces} of how it can be used to calculate with local equations of annuli in the sense of rigid analytic or Berkovich spaces, so that these techniques can be used in 
the future to carry out the Chabauty-Coleman method at places of bad reduction. We invite the reader to compare this with the material \cite{KK2022} and \cite{Kaya2022}, where the case of hyperelliptic curves is treated.

\begin{remark}
Before starting the paper, we would like to make one closing remark. Coverings $X'\to{X}$ of the type in this paper can in principle always be represented in terms of univariate polynomials $f(z)\in{K(X)}$ by the primitive element theorem. Our algorithms are stated in terms of these since many coverings in practice are given in this form. %
In certain circumstances however, coverings arise more naturally as the zero set of multiple \emph{multivariate} polynomials. For instance, one can consider parametrized polynomial systems of the form
\begin{equation*}
  f=a_{1} x^3 +a_{2}x+ a_{3} y^2 \quad\text{and}\quad b_{1} x^3+b_{2} y^2+b_{3}=g
\end{equation*}
over $X=\Spec(A)$, where $A=K[a_{1},a_{2},a_{3},b_{1},b_{2},b_{3}]$ and $K$ is a field. This is known as a \emph{Bernstein generic family}, see \cite{Bernstein1975}. The scheme $X_{0}=A[x,y]/(f,g)$ is then generically finite of degree $6$ over $X$, and this number is the mixed volume of the Newton polytopes of $f$ and $g$. This is the content of the \emph{BKK-Theorem}, see \cite[Theorem A]{Bernstein1975}. By taking normalizations, we then find a natural finite covering $X'\to X$, which is the same as the natural covering $X_{0}\to X$ on a Zariski dense open subset. %
The correspondences in \cref{fig:Correspondences} now still work, but one has to work with $D_{x}$-orbits of power series expansions of both $x$ \emph{and} $y$. Here we view $x$ and $y$ as elements of the function field $K(X_{0})$.  

For these types of systems, one has the same type of monodromy as described in \cref{rem:Monodromy}. We now compare this to the monodromy found in the method of \emph{homotopy continuation}, see \cite{DHJLLS19} and \cite{BBCHLS23}. There, one starts with a one-parameter solution $(x_{i}(t))$ of a system of polynomial equations as above, which can be viewed as a set of elements $x_{i}(t)\in \mathbb{C}((t))(t^{1/n})=K$ for some $n\geq{1}$. We will also view these as lying in a convergent power series subring of $K$. In terms of the set-up above, this gives a $K$-valued point of $X$, corresponding to a prime $\mathfrak{p}\subset A$. One now is interested in propagating the solutions from $t=0$, to $t=1$. We view these two points as prime ideals $\mathfrak{m}_{0}=(t)$ and $\mathfrak{m}_{1}=(t-1)$ that contain $\mathfrak{p}$ (we assume here that these two limit points also lie in $X$). Note that the corresponding poset is exactly as in \cref{exa:Poset1}. What happens quite often in homotopy continuation is that the branches switch, see \cite[Figure 1]{BBCHLS23} for an intuitive illustration. We view the phenomenon in \cref{fig:Covering} as an analogue of this. In a sense, the methods in this paper can be seen as a symbolic incarnation of homotopy continuation over general schemes. %

\end{remark}

     \subsection{Notation and terminology}\label{sec:Notation}
Fields will be denoted by $K$ or $L$. An algebraic closure of a field $K$ is denoted by $\overline{K}$ and a separable closure is denoted by $K^{\sep}$. The field of $p$-adic numbers will be denoted by $\mathbb{Q}_{p}$, and the field of Puiseux series over a field $K$ will be denoted by $K\{\{t\}\}$. Let $A$ be a unique factorization domain (UFD) and let $f(z)\in{A[z]}$. The content of $f$ is the greatest common divisor of its coefficients. This is well defined up to a unit. We denote it by $\mathrm{cont}(f)$. If $\mathrm{cont}(f)$ is a unit, then we say that $f$ is \emph{content-free}. Suppose that $g\in {A}$ is irreducible. The $g$-content of $f(z)$ is $g^{r}$, where $r$ is the minimum of the valuations of the coefficients of $f(z)$. We denote it by $\mathrm{cont}_{g}(f)$. Suppose that $f(z)\in{A[z]}$, where $A$ is a domain with field of fractions $K(A)$. We say that $f(z)$ splits completely over $K(A)$ if $f(z)=u\prod_{i=1}^{r} (z-\alpha_{i})$ for $u,\alpha_{i}\in{K(A)}$. The $\alpha_{i}$ are the \emph{roots} of $f(z)$. We say that $f(z)$ splits completely over $A$ if $\alpha_{i}\in{A}$ for all $i=1,...,r$.        %

If $G$ is a group acting on a set $S$, then we write $G\bs S$ for the corresponding orbit space, which we view as a subset of the power set $\mathcal{P}(S)$ of $S$.  For $s\in{S}$, we denote its $G$-orbit by $G(s)$. For a subgroup $H\subset{G}$, we use the notation $G/H$ for the left coset spaces of $H$ in $G$. For two subgroups $H$ and $N$ of a group $G$ and $g$ an element of $G$, we write 
$$HgN=\{g'\in{G}:\,\exists h\in{H},n\in{N} \text{ such that }g'=hgn\}$$
 for the corresponding double coset. The space of double cosets is denoted by $H\bs G/N$. It can be identified with the orbit space $H\bs (G/N)$ by mapping $HgN$ to the $H$-orbit of $gN$. We have a bijection $H\bs G/N\to N\bs G/H$ given by $HgN\mapsto Ng^{-1}H$.         

If $X$ and $X'$ are normal, connected Noetherian schemes\footnote{Note that normal locally Noetherian schemes are connected if and only if they are integral by \cite[\href{https://stacks.math.columbia.edu/tag/033N}{Lemma 033N}]{stacks-project}. } and $\phi: X'\to{X}$ is a finite dominant morphism, then we call this morphism a \emph{covering} of $X$. Note that we do not require this covering to be \'{e}tale. The function field of a normal, connected Noetherian scheme is denoted by $K(X)$ or $L$ if the scheme $X$ is clear from context. We will assume that coverings %
$X'\to{X}$ are separable, in the sense that they are \'{e}tale at the generic point of $X'$.  %
Equivalently, the extensions of function fields are all separable. Note that to give a covering of $X$, it suffices to give a separable irreducible polynomial $f(z)\in{K(X)}$, since the morphism $X'\to{X}$ can be recovered by taking the normalization of $X$ in the field extension $K(X)\to K(X)[z]/(f(z))$. %
If $\Sigma$ is a (multi)graph, then we denote its vertex set by $V(\Sigma)$ and its edge set by $E(\Sigma)$. %

\begin{remark}
We have implemented part of these algorithms in this paper in OSCAR, especially when $K$ is a field. For schemes over the valuation rings $R$ of $p$-adic fields, these implementations require a work-around in terms of splitting the reduction map $R\to k$, where $k$ is the residue field. For now, we have included an example of a $p$-adic curve for $p$ large that comes from a curve over $\mathbb{Q}(t)$, see \cref{rem:WorkAround}. %
A general $p$-adic implementation of the techniques in this paper is the subject of future work. %
\end{remark}

\subsection{Acknowledgements}

This paper has gone through many iterations over the years, and the author is indebted to many people for their remarks, comments and examples, including: Yue Ren, Jeffrey Giansiracusa, Kazuhiko Yamaki, Akio Tamagawa, Benjamin Collas, Bernd Sturmfels, Hannah Markwig, Michael Stillman, Roberto Gualdi, Yassine El Maazouz, Enis Kaya, Alexander Betts, Sachi Hashimoto, Ole Ossen and Adam Morgan. 

The author was supported by UK Research and
Innovation under the Future Leaders Fellowship programme MR/S034463/2  %
as a Postdoctoral Fellow at Durham University, and the JSPS Postdoctoral Fellowship with ID No. 23769 and KAKENHI 23KF0187 as a Postdoctoral Fellow at Tohoku University and the University of Tsukuba. A version of the implemented algorithms can be freely accessed on the author's website at \href{https://paulhelminck.wordpress.com/code}{https://paulhelminck.wordpress.com/code}. These include code for the two main examples of this paper: \cref{exa:MainExample1} and \cref{exa:PlaneQuartic}. It is written in OSCAR, see \cite{OSCAR} and \cite{OSCAR-book}. 

\tableofcontents

\section{Reconstructing poset structures}\label{sec:ReconstructionSection}

In this section we study the problem of reconstructing the relative poset structure of a covering of normal connected Noetherian schemes in terms of group-theoretic data. We first establish our notation for posets in \cref{sec:PreliminariesPosets}. We then define a group-theoretically enhanced poset in \cref{sec:Topoi}, together with its corresponding glued category. %
From \cref{sec:CovSchemes1} onward, we show how to apply this framework to the problem of reconstructing relative poset structures of coverings of normal connected Noetherian schemes. We will build up our reconstruction algorithm step-by-step, starting with posets consisting of a single point, then extending the result to edges and trees, and ending with general finite posets.  %

\subsection{Preliminaries on posets}\label{sec:PreliminariesPosets}
We first fix our notation for various set-theoretic quantities. 
\begin{definition}
Let $X$ be a topological space that is $T_{0}$, so that %
for every pair $(x,y)\in{X^{2}}$ of distinct points, there is an open neighborhood of either $x$ or $y$ that does not contain the other.  
Let $x_{1},x_{2}\in{X}$. %
The relation 
\begin{center}
$x_{1}\leq x_{2}$ $\Leftrightarrow$ $x_{2}\in \overline{\{x_{1}\}}$
\end{center} then defines a partial order on $X$. In particular, if $X$ is a scheme, then this endows $X$ with the structure of a poset. 
\end{definition}

\begin{remark}
We can view a $T_{0}$-space $X$ as an acyclic directed graph through its poset structure. %
Similarly, we can view $X$ as a $1$-category whose objects are the elements of $X$ and whose arrows $x\to y$ are given by relations $x\leq {y}$. We will do this without further mention. 
\end{remark}
\begin{definition}
Let $S\subset{X}$ be a subset of a $T_{0}$-space $X$ and endow $S$ with its induced partial order. We denote its associated \emph{Hasse diagram} by $S_{HD}$, which we consider as a directed acyclic graph. %
Here $e=xy\in{E(S_{HD})}$ for $x\neq{y}$ if and only if $x$ covers $y$, in the sense that there is no $z\in{S}$ with $x>z>y$. The barycentric subdivision $S_{HD,B}$ of $S_{HD}$ is the directed acyclic graph obtained by replacing every edge $e=xy\in E(S_{HD})$ by a vertex. The directed edges are given by $e_{x}=(x,e)$ and $e_{y}=(y,e)$. 
\end{definition}
\begin{remark} Note that %
 we have effectively removed edges from the directed acyclic graph of $S$ that can be deduced from reflexivity and transitivity in the Hasse diagram. In all cases in this paper, we can recover the poset structure of $S$ from the graph structure of its Hasse diagram (either because $S$ is finite, or because it is the inverse image of a finite set under an integral morphism). 
 \end{remark}

\subsection{Group-theoretically enhanced posets}\label{sec:Topoi}

In this section we define an enhanced version of a poset by endowing its elements with extra group-theoretic data. This extra data essentially consists of a profinite group $G$ attached to an element of the poset, together %
with comparison maps for adjacent elements. %
The local group-theoretic data introduced in \cref{fig:Correspondences} will be objects inside the category of topological spaces $X$ with an action of $G$ on $X$. We denote this by $G$-{Spaces}. For finite coverings, these spaces $X$ will simply be discrete spaces. To glue this group-theoretic data, we introduce a glued category (which is usually called the $2$-limit), which we will use to reconstruct poset structures.    
 The $2$-categorical phenomenona will only occur if the order complexes of the posets are sufficiently complicated. We will give an explicit algebraic example of this in terms of power series data in \cref{exa:ExplicitTransferMap}.      %

We start with the group-theoretic data over a single vertex.   %

\begin{definition}\label{def:SimpleTopoi}
Let $G$ be a profinite group. %
The objects of the category $G$-Spaces are topological spaces $X$ with a continuous action of $G$ on $X$. %
Morphisms in $G\mhyphen\text{Spaces}$ are $G$-equivariant continuous maps $X_{1}\to X_{2}$. %
We write $\mathcal{C}$ for the $2$-category of categories of the form $G$-Spaces for some profinite group $G$. %
The $1$-morphisms in this category are the functors arising from continuous homomorphisms of groups $G_{1}\to G_{2}$. The $2$-morphisms are natural transformations of functors. These will not play a large role in this paper.  %
The $1$-morphisms arising from continuous morphisms of profinite groups in this paper will either come from %
inclusions or conjugation isomorphisms.
\end{definition}

\begin{remark}
The $G$-spaces of interest to us in this paper are as follows. In the first case, we have %
$G=D_{x}$ and $X=V(f(z))=\{\alpha_{1},...,\alpha_{n}\}$, where $x$, $D_{x}$, $V(f(z))$ and the $\alpha_{i}$ are as in \cref{sec:TheoreticalOverview}. In this case we endow $X$ with the discrete topology. 

For our second example (again with the notation as in \cref{sec:TheoreticalOverview}), %
consider the action of $D_{x}$ on the ambient absolute Galois group $G_{\sep}:=\mathrm{Gal}(K(X)^{\sep}/K(X))$ by left-multiplication. We then take $G=D_{x}$ and $X=G_{\sep}$. Similarly, if $H$ is a closed subgroup of $G_{\sep}$, then we obtain a continuous action of $G=D_{x}$ on the set of left cosets $X=G_{\sep}/H$, where the latter has the quotient topology. Note that $X$ usually does not have the discrete topology (for instance for $H=(1)$). This explains our need for working with the category of $G$-Spaces rather than $G$-Sets. In our algorithms, we will however always work with finite sets. %
\end{remark}

To glue these sets with a group action, we introduce a group-theoretic enhancement of a poset.

\begin{definition}\label{def:GroupTheoreticEnhancement}
Let $I$ be a finite directed acyclic graph and let $\mathcal{G}$ be the category of profinite topological groups.  %
A group-theoretic enhancement of $I$ is a contravariant functor
\begin{equation*}
\mathcal{F}: I\to \mathcal{G}
\end{equation*}
In other words, for every vertex $i\in{I}$ we have a profinite group $\mathcal{F}(i)=G_{i}$, and for every directed edge $i\to j$ %
we have a continuous group homomorphism
\begin{equation*}
\psi^{*}_{i,j}:G_{j}\to G_{i}
\end{equation*}
These arrows satisfy a compatibility condition: for two arrows $i\to j$ and $j\to k$ with composition $i\to k$, we have that $\psi^{*}_{k,i}=\psi^{*}_{k,j}\circ{\psi^{*}_{j,i}}$. %
We write %
\begin{equation*}
\psi_{i,j}: G_{i}\mhyphen\text{Spaces}\to G_{j}\mhyphen\text{Spaces} %
\end{equation*}
for the induced functor of categories of topological spaces with an action by $G_{i}$ and $G_{j}$ respectively. Explicitly, this functor is obtained as follows. Let $X$ be a set with a $G_{i}$-action. This is given by %
a group homomorphism $G_{i}\to \mathrm{Aut}(X)$. We then compose this with the group homomorphism $G_{j}\to G_{i}$ to obtain an action of $G_{j}$ on $X$.  %

Suppose that $S$ is a finite poset with Hasse diagram $S_{HD}$, considered as a finite directed acyclic graph. A group-theoretic enhancement of $S$ is a group-theoretic enhancement of the graph $S_{HD}$ or its barycentric subdivision $S_{HD,B}$. 
\end{definition}

To every group-theoretic enhancement, we can now associate a new category $\mathcal{C}(\mathcal{F})$, %
which can be seen as containing all the glued data.

\begin{definition}\label{def:2Limit}
Let $\mathcal{F}$ be a group-theoretic enhancement of a finite directed acyclic graph $I$. We assume here that every arrow is terminal, in the sense that if $i\to j$ is a non-trivial arrow, then there is no $j\to k$ other than the identity map $j\to j$.   %
We follow \cite[Section 4]{Pirashvili2015} and construct a $1$-category $\mathcal{C}(\mathcal{F})$ %
as follows\footnote{Our functors are covariant, so some of the notation is different, see  \cite[Section 1.1]{Pirashvili2020}. }. An object in $\mathcal{C}(\mathcal{F})$ %
consists of the following:
\begin{enumerate}
\item An object $S_{i}\in G_{i}\mhyphen\text{Spaces}$ for every $i\in I$.
\item An isomorphism $\xi_{i,j}:\psi_{i,j}(S_{i})\to S_{j}$ in the category $G_{j}\mhyphen\text{Spaces}$ for every morphism $i\to j$. We call these isomorphisms %
 \emph{transfer maps}.
\end{enumerate}
 An object in $\mathcal{C}(\mathcal{F})$ %
 will be denoted by $(S_{i},\xi_{i,j})$. A morphism $(S_{i},\xi_{i,j})\to (T_{i},\nu_{i,j})$ is a collection of maps $\rho_{i}:S_{i}\to T_{i}$ that commute with the transfer maps $\xi_{i,j}$ and $\nu_{i,j}$, in the sense that the following diagrams commute:
\begin{displaymath}
\begin{tikzcd}
{\psi_{i,j}(S_{i})} \arrow[d,"{\psi_{*}(\rho_{i})}"] \arrow[r, "{\xi_{i,j}}"] & {S_{j}} \arrow[d,"\rho_{j}"] \\
{\psi_{i,j}(T_{i})}  \arrow[r,"{\nu_{i,j}}"]          & {T_{j}}
\end{tikzcd}
\end{displaymath}
We will also refer to this category $\mathcal{C}(\mathcal{F})$ as the $2$-limit of the induced diagram of $G\mhyphen\text{Spaces}$, see \cref{rem:2Limits}.   %
\end{definition}   

\begin{remark}
Normally, one also requires the transfer maps to satisfy a natural \emph{cocycle condition}. This is however a trivial condition by our assumption on $I$, so we omitted this here. The graphs arising from our constructions are all arrow-terminal in the sense of the definition, so this suffices for us. 
\end{remark}

\begin{remark}\label{rem:2Limits}
As mentioned earlier, we think of the $1$-category $\mathcal{C}(\mathcal{F})$ introduced in \cref{def:2Limit} as a $2$-limit. That is, there is an induced functor from $I$ to the $2$-category of categories equivalent to $G\mhyphen{\text{Spaces}}$ for some profinite topological group $G$. Here a functor from a $1$-category to a $2$-category is taken in the sense of \cite[\href{https://stacks.math.columbia.edu/tag/003N}{Definition 003N(1)}]{stacks-project}. The category $\mathcal{C}(\mathcal{F})$ is then the $2$-limit of this diagram in the sense of \cite[Section 4]{Pirashvili2015}. We will not make use of the $2$-categorical properties of this limit however, so the reader can simply think of the $1$-category $\mathcal{C}(\mathcal{F})$. As mentioned before, this category contains all of the data for our power series algorithms.   %
\end{remark}

\begin{remark} \label{rem:Tree1Limit}
Suppose that the undirected graph associated to $I$ is a connected tree.  %
Then we can think of the isomorphisms in \cref{def:2Limit} as equalities. %
If $I$ is not a connected tree however, then the additional isomorphisms can create new objects. %
Suppose for instance that we take the circle graph in \cref{fig:CircleDAG} and  %
assign the trivial group to every vertex. Over every vertex, we now take the set $\{1,2\}$. When we move from one vertex to the other, this set is unchanged by the pullback maps. %
In the $2$-limit, we can however use isomorphisms (which are simply bijections of sets) to twist objects. For instance, as in \cref{fig:CircleDAG}, we can twist using the non-trivial permutation $1\mapsto 2$ and $2\mapsto 1$. The induced object of $\mathcal{C}(\mathcal{F})$ %
can be seen as corresponding to the non-trivial degree-$2$ covering of the circle, see \cref{fig:CircleCovering}. We will see an explicit realization of this example in \cref{exa:ExplicitTransferMap}. There, the set $\{1,2\}$ will correspond to local power series expansions $\{\gamma_{1},\gamma_{2}\}$ of an irreducible polynomial $f(z)$ that gives a degree-$2$ covering of an elliptic curve.        
\end{remark}

\begin{figure}[h]
\scalebox{0.6}{
\begin{tikzpicture}[line cap=round,line join=round,>=triangle 45,x=1cm,y=1cm]
\clip(3,3) rectangle (17,13);
\draw [->,line width=0.1pt] (9,12) -- (9.1,12);
\draw [->,line width=0.1pt] (9,11) -- (9.1,11);
\draw [->,line width=1pt] (6,8) -- (8,7);
\draw [->,line width=1pt] (14,8) -- (12,7);
\draw [->,line width=1pt] (14,8) -- (12,9);
\draw [->,line width=1pt] (6,8) -- (8,9);
\draw [line width=1pt] (8,9)-- (10,10);
\draw [line width=1pt] (10,10)-- (12,9);
\draw [line width=1pt] (8,7)-- (10,6);
\draw [line width=1pt] (10,6)-- (12,7);
\draw [line width=1pt] (6,8)-- (10,6);
\draw [line width=1pt] (10,6)-- (14,8);
\draw [line width=1pt] (10,10)-- (14,8);
\draw [line width=1pt] (6,8)-- (10,10);
\draw [line width=1pt] (4,7)-- (5,7);
\draw [line width=1pt] (5,7)-- (5,9);
\draw [line width=1pt] (5,9)-- (4,9);
\draw [line width=1pt] (4,9)-- (4,7);
\draw [line width=1pt] (15,9)-- (15,7);
\draw [line width=1pt] (15,7)-- (16,7);
\draw [line width=1pt] (16,7)-- (16,9);
\draw [line width=1pt] (16,9)-- (15,9);
\draw [line width=1pt] (9.5,10.5)-- (10.5,10.5);
\draw [line width=1pt] (10.5,10.5)-- (10.5,12.5);
\draw [line width=1pt] (10.5,12.5)-- (9.5,12.5);
\draw [line width=1pt] (9.5,12.5)-- (9.5,10.5);
\draw [line width=1pt] (9.5,11.5)-- (10.5,11.5);
\draw [line width=1pt] (4,8)-- (5,8);
\draw [line width=1pt] (15,8)-- (16,8);
\draw [line width=1pt] (9.5,5.5)-- (9.5,3.5);
\draw [line width=1pt] (9.5,3.5)-- (10.5,3.5);
\draw [line width=1pt] (10.5,3.5)-- (10.5,5.5);
\draw [line width=1pt] (10.5,5.5)-- (9.5,5.5);
\draw [line width=1pt] (9.5,4.5)-- (10.5,4.5);
\draw (4.25,8.75) node[anchor=north west] {$1$};
\draw (4.25,7.75) node[anchor=north west] {$2$};
\draw (9.75,12.25) node[anchor=north west] {$1$};
\draw (9.75,11.25) node[anchor=north west] {$2$};
\draw (15.25,8.75) node[anchor=north west] {$1$};
\draw (15.25,7.75) node[anchor=north west] {$2$};
\draw (9.75,5.25) node[anchor=north west] {$1$};
\draw (9.75,4.25) node[anchor=north west] {$2$};
\draw [shift={(9,11.5)},line width=1pt]  plot[domain=1.5707963267948966:4.71238898038469,variable=\t]({1*0.5*cos(\t r)+0*0.5*sin(\t r)},{0*0.5*cos(\t r)+1*0.5*sin(\t r)});
\begin{scriptsize}
\draw [fill=ududff] (6,8) circle (3pt);
\draw [fill=ududff] (10,6) circle (3pt);
\draw [fill=ududff] (14,8) circle (3pt);
\draw [fill=ududff] (10,10) circle (3pt);
\draw [fill=ududff] (4,7) circle (3pt);
\draw [fill=ududff] (5,7) circle (3pt);
\draw [fill=ududff] (5,9) circle (3pt);
\draw [fill=ududff] (4,9) circle (3pt);
\draw [fill=ududff] (15,9) circle (3pt);
\draw [fill=ududff] (15,7) circle (3pt);
\draw [fill=ududff] (16,7) circle (3pt);
\draw [fill=ududff] (16,9) circle (3pt);
\draw [fill=ududff] (9.5,10.5) circle (3pt);
\draw [fill=ududff] (10.5,10.5) circle (3pt);
\draw [fill=ududff] (10.5,12.5) circle (3pt);
\draw [fill=ududff] (9.5,12.5) circle (3pt);
\draw [fill=ududff] (9.5,5.5) circle (3pt);
\draw [fill=ududff] (9.5,3.5) circle (3pt);
\draw [fill=ududff] (10.5,3.5) circle (3pt);
\draw [fill=ududff] (10.5,5.5) circle (3pt);
\end{scriptsize}
\end{tikzpicture}
}
\caption{\label{fig:CircleDAG}
The directed acyclic graph in \cref{rem:Tree1Limit}, with the trivial enhancement. Over each vertex, we can consider the set $\{1,2\}$, which can be thought of as the set of branches of a local covering space. In the category $\mathcal{C}(\mathcal{F})$, %
these branches can be glued using local twists. For instance, we can permute the set above by sending $1$ to $2$, and $2$ to $1$. This gives a non-trivial covering space of the circle, see \cref{exa:ExplicitTransferMap} and \cref{fig:CircleCovering}.     %
}
\end{figure}

\begin{example}\label{exa:TropicalCurve}
Let $\Sigma$ be a finite connected undirected multigraph with a weight function $w:V(\Sigma)\to \mathbb{N}$ on the vertices and a length function $\ell:E(\Sigma)\to \mathbb{R}_{>0}$ on the edges. We call this a tropical curve. We will assume withous loss of generality that $\Sigma$ is loopless. By subdividing the edges of $\Sigma$, we can view this as the Hasse diagram associated to the poset structure of the special fiber of a semistable model $\mathcal{X}$, see \cref{def:SFPoset} for the exact definition. %
Note that an intersection point of two irreducible components becomes a single element in terms of posets, so that the Hasse diagram $\Sigma_{HD}$ of the corresponding poset $\Sigma_{PS}\subset \mathcal{X}$ is a barycentric subdivision of the dual intersection graph of the special fiber.  For instance, the directed acyclic graph in \cref{fig:CircleDAG} is the Hasse diagram of two irreducible components meeting in two points, see \cref{exa:SemistableModelSection} for an explicit semistable model.   We turn $\Sigma_{HD}$ into a vertex-weighted graph by adding $g(x)=0$ for the newly introduced points; the genera of the other points remain the same.  

We can define a group-theoretic enhancement as follows. Let $x$ be an element of $\Sigma_{PS}$ and let $\val(x)$ be the valence of $x$. Let $(\overline{C}_{x},D)$ be a marked Riemann surface of genus $g=g(x)$ with $r=\val(x)$ marked points $D\subset \overline{C}_{x}$. We assign the profinite fundamental group $\pi(C_{x})$ of $C_{x}=\overline{C}_{x}\bs D$ to $x$. This group has the following well-known presentation. Let $G_{0}$ be the free group on the generators $a_{i}$ and $b_{i}$ for $1\leq{i}\leq{g}$, and the generators $c_{j}$ for $1\leq{j}\leq r$, and let %
$H\subset G$ be the subgroup generated by %
$(\prod_{i=1}^{g} [a_{i},b_{i}])\cdot (\prod_{j=1}^{r} c_{j})$, where $[a_{i},b_{i}]=a_{i}b_{i}a_{i}^{-1}b_{i}^{-1}$. We then have $\pi(C_{x})\simeq \widehat{G/H}$.   
Through this presentation, we obtain a natural 
injection 
\begin{equation*}
\hat{\Z}\to \pi(C_{x})
\end{equation*}  
for every marked point. %
This gives the desired comparison maps for the edges, so that we obtain a group-theoretic enhancement of $\Sigma_{PS}$. %
Note that the twisting maps over edges for sets with a trivial group action can be detected in terms of coverings of graphs as non-trivial maps of graphs. 
The non-trivial $\hat{\mathbb{Z}}$-twisting maps that we allow in the $2$-limit can then be seen as incarnations of the gluing data introduced in \cite{Helminck2023}.      %
We also invite the reader to compare the notion of a $2$-limit in this case with the graph of groups construction in algebraic topology, see  %
\cite[Section 1.A]{Hatcher2001}.
\end{example}

\begin{remark}
In terms of algebraic topology, we can view a group-theoretic enhancement as adding a $K(\pi,1)$-space   (namely, the one corresponding to the group $\mathcal{F}(i)$) to every vertex.  %
From this point of view, taking the $2$-limit is somewhat strange, as this removes some of the higher-dimensional topology. %
Our reason for considering $2$-limits in this paper is that these types of limits are well-suited for one-dimensional problems such as reconstructing poset structures. Here we view the latter as a one-dimensional problem by reducing a poset to its Hasse diagram. 
\end{remark}

\subsection{Reconstructing fibers}\label{sec:CovSchemes1}
For the remainder of \cref{sec:ReconstructionSection}, we focus on schemes and their associated poset structures. 
Let $L=K(X)$ be the function field of a normal connected Noetherian scheme $X$ and let $L^{\sep}$ be a fixed separable closure. We write $X^{\sep}$ for the normalization of $X$ inside $L^{\sep}$ and $\phi^{\sep}:X^{\sep}\to X$ for the corresponding morphism of normalizations. We denote the absolute Galois group of $L^{\sep}/L$ by $G$, which acts on $X^{\sep}$. Let $x^{\sep}\in X^{\sep}$ be a point lying over $x$ and let 
\begin{equation*}
D_{x^{\sep}/x}=\{\sigma\in{G}:\sigma(x^{\sep})=x^{\sep}\} 
\end{equation*}
be the decomposition group of $x^{\sep}$ over $x$. If $x^{\sep}$ is clear from context, we will also denote this by $D_{x}$. 
The double coset spaces associated to $D_{x^{\sep}/x}$ completely determine the different fibers over $x$ by the following. %

\begin{lemma}\label{lem:RecoveringPoints}
Let $L\to L'\to L^{\sep}$ be a fixed separable extension of $L$ with corresponding closed subgroup $H\subset {G}$, and let $\phi_{H}: X_{H}\to X$ be the induced morphism of normalizations.
There is a bijection %
\begin{equation*}
H\bs G/D_{x^{\sep}/x}\simeq H\bs (G/D_{x^{\sep}/x})\to %
\phi^{-1}(x). 
\end{equation*}
Explicitly, we send a double coset space $H\tau D_{x^{\sep}/x}$ to %
${\phi^{H}}(\tau({x}^{\sep}))$, where ${\phi^{H}}:{X^{\sep}}\to X'$ is the morphism induced from $L'\to L^{\sep}$. In term of reverse double coset spaces $D_{x^{\sep}/x}\bs G/H$, this sends $D_{x^{\sep}/x}\tau{H}$ to ${\phi^{H}}(\tau^{-1}({x}^{\sep}))$.  %
\end{lemma}  
\begin{proof}

See \cite[Lemma 2.12]{Helminck2024}. 
 The proof follows from 
$X^{\sep}/H=X'$ and the fact that 
$D_{x^{\sep}/x}$ acts transitively on the fibers of $X^{\sep}\to X$ (see \cite[\href{https://stacks.math.columbia.edu/tag/0BRK}{Lemma 0BRK}]{stacks-project}).
\end{proof}

\subsection{Reconstructing local specializations}

We now consider the problem of reconstructing the poset structure of a finite cover over finite subtrees of $X$. To do this, we will need the %
notion of being relatively unibranch, which is a condition on the singularities of $X$ that allows us to  globalize the theory of decomposition groups to larger posets. %
\begin{definition}
An affine scheme $X=\Spec(A)$ is \emph{unibranch} if $A$ has a unique minimal prime $\mathfrak{p}$ and $A/\mathfrak{p}$ is integrally closed in its field of fractions. 
\end{definition}
\begin{lemma}
The spectrum of a local affine ring $A$ with maximal ideal $\mathfrak{m}$ is unibranch if and only if the Henselization $A^{h}$ of $A$ with respect to $\mathfrak{m}$ has a unique minimal prime.   
\end{lemma} 
\begin{proof}
See \cite[\href{https://stacks.math.columbia.edu/tag/0BQ0}{Lemma 0BQ0}]{stacks-project}. 
\end{proof} 
\begin{definition}
A scheme $X$ is unibranch at $x\in{X}$ if its local ring $\mathcal{O}_{X,x}$ is unibranch.
\end{definition}  %
\begin{definition}\label{def:RelativelyUnibranch}
Let $X$ be a scheme and let $y$ be a generization of $x$, so that $x\in V(y)=\overline{\{y\}}$. We say that $X$ is relatively unibranch with respect to $x\geq{y}$ if the closed subscheme $V(y)$ is unibranch at $x\in V(y)$. Let $S$ be a subset. We say that $X$ is \emph{relatively unibranch} over $S$ if $X$ is relatively unibranch with respect to any pair of points in $S$.  
\end{definition}

\begin{lemma}
Let $x,y\in{X}$ be points such that $x\geq {y}$. 
We view $x$ and $y$ %
as points of $\mathrm{Spec}(\mathcal{O}_{X,x})$. Then $X$ is relatively unibranch with respect to the pair $x\geq{y}$ if and only if the fiber over $y$ of the Henselization morphism 
\begin{equation*}
\mathrm{Spec}(\mathcal{O}^{h}_{X,x})\to \Spec(\mathcal{O}_{X,x})
\end{equation*} 
consists of one point. 
\end{lemma}
\begin{proof}
This follows from \cite[\href{https://stacks.math.columbia.edu/tag/05WQ}{Lemma 05WQ}]{stacks-project}, which says that taking the Henselization is compatible with taking quotients. 
\end{proof}

\begin{example}\label{exa:NotUnibranch}
Let $K$ be a field and $X=\Spec(K[x,y])$ with $\mathfrak{m}=(x,y)$ and $\mathfrak{p}=(y^2-x^2(x+1))$. Then $X$ is not relatively unibranch with respect to the pair $\mathfrak{m}\geq\mathfrak{p}$. Indeed, if we write $z=\sqrt{x+1}\in\mathcal{O}^{h}_{X,\mathfrak{m}}$, then there are two prime ideals $\mathfrak{p}_{\pm}=(y\pm xz)$ lying over $\mathfrak{p}$. 

Birationally, we can however make this pair relatively unibranch again: consider the morphism $X_{0}=\Spec(K[x_{1},y_{1}])\to X$ given on the level of rings given $x\mapsto x_{1}$ and $y\mapsto y_{1}x_{1}$. Then the pair $\mathfrak{m}_{1}=(x_{1},y_{1})\supset (y_{1}^2-(x_{1}-1))=\mathfrak{p}_{1}$ is relatively unibranch, and it maps to $\mathfrak{m}\geq\mathfrak{p}$. Moreover, the extended poset where we add $(x_{1})$ is also relatively unibranch, so that the poset of the total transform is relatively unibranch.    %
\end{example}
\begin{example}\label{exa:RegularPointsRelUnibranch}
Suppose that $x$ is a regular point of $X$ with local system of parameters $z_{1},...,z_{k}$. For any subset of this system, we obtain a point $y$ with $x\geq{y}$ (see \cite[\href{https://stacks.math.columbia.edu/tag/00NQ}{Lemma 00NQ}]{stacks-project}), and $X$ is relatively unibranch with respect to this pair.  
\end{example}

\begin{remark}
For the remainder of this paper, we will fix a finite set $S\subset{X}$, and we will assume that $X$ is relatively unibranch with respect to $S$. 
\end{remark}
As we will see shortly, the notion of being relatively unibranch will allow us to extend various techniques from Galois theory over a single point to more general posets. 

\begin{definition}\label{def:AbsoluteSections}
Consider a finite subset $S\subset{X}$ and let $S_{R,HD}\subset S_{HD}$ be a subgraph of the Hasse diagram of $S$. A section of $S_{R,HD}$ in $X^{\sep}$ is a finite subset $S^{\sep}\subset X^{\sep}$ with a subgraph $S^{\sep}_{R,HD}\subset S^{\sep}_{HD}$ that induces an isomorphism of graphs $S^{\sep}_{R,HD}\to S_{R,HD}$. %
We will also refer to this as an extension of $S_{R,HD}$. If $S$ consists of two points $x$ and $y$ with $x\leq{y}$, then we call the corresponding graph $S_{R,HD}$ an edge. In this case we will also simply write $S$ or $e$ for either the graph or the subset.   
\end{definition}
\begin{lemma}\label{lem:SectionLemma}
Suppose that $S_{R,HD}$ is a tree. Then there is a section $S^{\sep}_{R,HD}$ of $S_{R,HD}$ in $X^{\sep}$.  
\end{lemma}   
\begin{proof}
This follows from the Cohen-Seidenberg theorems from commutative algebra. 
\end{proof}

\begin{example}\label{exa:SemistableModelSection}
For general subgraphs, this section need not exist. Consider for instance a discrete valuation ring $R$ with uniformizer $\pi$, and let $\mathcal{X}=\Spec(R[u,v,y]/(y^2-(1-u)(1+v),uv-\pi))$. This is a local affine chart of a semistable model of an elliptic curve $E$ over the fraction field of $R$. Let 
\begin{align*}
\mathfrak{m}_{+}=(u,v,y-1),& \qquad &\mathfrak{m}_{-}=(u,v,y+1),\\
\mathfrak{p}_{u}=(u,y^2-(1+v)),&\qquad &\mathfrak{p}_{v}=(v,y^2-(1-u)).
\end{align*} The Hasse diagram of the subset $S=\{\mathfrak{m}_{+},\mathfrak{m}_{-},\mathfrak{p}_{u},\mathfrak{p}_{v}\}$ %
forms a circle-like graph, corresponding to the dual graph of the special fiber of $\mathcal{X}$, see \cref{fig:CircleDAG}. %

The $2:1\mhyphen$covering induced by the polynomial $z^2-(1+v)$ gives a completely split covering over the special fiber $\mathcal{X}_{s}=\mathcal{X}\times_{R} k$ by a quick calculation, see \cref{exa:ExplicitTransferMap}. The inverse image of $S$ is moreover connected, so that we cannot find a section. Note that the $2$-torsion point used for the covering here reduces to the toric part of $E$. %
\end{example}

\begin{lemma}\label{lem:TransitiveOnChains}
Let $e_{i}={x}^{\sep}y^{\sep}_{i}$ be two extensions of the edge $e=xy$ to $X^{\sep}$. Then there is a $\sigma \in D_{x^{\sep}/x}$ such that $\sigma(y^{\sep}_{1})=y^{\sep}_{2}$. 
\end{lemma}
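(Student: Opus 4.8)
The plan is to build the required $\sigma$ as a product $\sigma=\rho\tau$: first an element $\tau\in G$ moving $y^{\sep}_{1}$ to $y^{\sep}_{2}$, then an element $\rho$ of the relative decomposition group $D_{y^{\sep}_{2}/y}$ correcting the resulting image of $x^{\sep}$. The two ingredients are the transitivity of $G=\operatorname{Gal}(L^{\sep}/L)$ on fibres of $X^{\sep}\to X$ and the transitivity of a decomposition group on the generizations of a point.

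First I would use that $G$ acts transitively on the fibre of $X^{\sep}\to X$ over $y$; this is the pro-finite analogue of \cref{lem:OrbitsFibers} (equivalently, the classical transitivity of the Galois action on the points of a normalisation lying over a fixed point, obtained as an inverse limit over the finite subextensions of $L^{\sep}/L$ together with a compactness argument). Choose $\tau\in G$ with $\tau(y^{\sep}_{1})=y^{\sep}_{2}$. Since $\tau$ fixes $X$ pointwise, $\tau(x^{\sep})$ again lies over $x$; and since $e_{1}$ is an extension of $e$ we have $x^{\sep}\supset y^{\sep}_{1}$, hence $\tau(x^{\sep})\supset\tau(y^{\sep}_{1})=y^{\sep}_{2}$. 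As $e_{2}$ is also an extension of $e$, we have $x^{\sep}\supset y^{\sep}_{2}$ as well. Thus $x^{\sep}$ and $\tau(x^{\sep})$ are two generizations of $y^{\sep}_{2}$, both lying over $x$.

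For the correction step I would pass to the Henselian local ring $B=\mathcal{O}_{X^{\sep},y^{\sep}_{2}}$, which, as in the discussion preceding \cref{lem:OrbitsFibers}, is identified with the normalisation of the Henselian local ring $\mathcal{O}^{h}_{X,y}$ in $L^{\sep}$ (it is local because an integral domain extension of a Henselian local domain is local), and on which $D_{y^{\sep}_{2}/y}\simeq\operatorname{Gal}(L^{\sep}/L^{h}_{y})$ acts fixing $\mathcal{O}^{h}_{X,y}$ pointwise. Write $\mathfrak{p}_{x}\subset\mathcal{O}_{X,y}$ for the prime corresponding to $x$. The generizations of $y^{\sep}_{2}$ lying over $x$ are exactly the primes of $B$ contracting to $\mathfrak{p}_{x}$ in $\mathcal{O}_{X,y}$, hence contracting in $\mathcal{O}^{h}_{X,y}$ to some prime over $\mathfrak{p}_{x}$; and on the primes of $B$ lying over any single such prime of $\mathcal{O}^{h}_{X,y}$, the group $\operatorname{Gal}(L^{\sep}/L^{h}_{y})$ again acts transitively. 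Hence, once we know that $x^{\sep}$ and $\tau(x^{\sep})$ contract to the \emph{same} prime of $\mathcal{O}^{h}_{X,y}$, there is $\rho\in D_{y^{\sep}_{2}/y}$ with $\rho(\tau(x^{\sep}))=x^{\sep}$; then $\sigma:=\rho\tau$ fixes $x^{\sep}$, so $\sigma\in D_{x^{\sep}/x}$, while $\sigma(y^{\sep}_{1})=\rho(y^{\sep}_{2})=y^{\sep}_{2}$ because $\rho$ stabilises $y^{\sep}_{2}$, as desired.

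The step I expect to be the main obstacle is precisely this last proviso: showing that the branch of $\overline{\{x\}}$ at $y$ singled out by $x^{\sep}$ coincides with the one singled out by $\tau(x^{\sep})$. This is immediate when $\overline{\{x\}}$ is unibranch at $y$, since then $\mathfrak{p}_{x}$ has a unique prime above it in $\mathcal{O}^{h}_{X,y}$; in general I would argue it from the way the specializations $x^{\sep}\supset y^{\sep}_{2}$ and $\tau(x^{\sep})\supset y^{\sep}_{2}$ interact with the compatible system of Henselisations at $y^{\sep}_{2}$, forcing both points to contract to the branch of $\overline{\{x\}}$ visible along $y^{\sep}_{2}$. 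The remaining ingredients — transitivity of the Galois action on fibres, the identification of $B$ with the normalisation of $\mathcal{O}^{h}_{X,y}$ in $L^{\sep}$ together with $D_{y^{\sep}_{2}/y}\simeq\operatorname{Gal}(L^{\sep}/L^{h}_{y})$, and the group-theoretic bookkeeping with $\sigma=\rho\tau$ — are routine given \cref{prop:KummerDedekind}, \cref{lem:OrbitsFibers}, and the cited properties of $X^{\sep}$.
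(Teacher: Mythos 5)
Your route is genuinely different from the paper's: you factor the sought element as $\sigma=\rho\tau$, first moving $y^{\sep}_{1}$ to $y^{\sep}_{2}$ by transitivity of the full group $G$ on the fibre over $y$, and then trying to repair the image of $x^{\sep}$ inside $D_{y^{\sep}_{2}/y}$ — i.e.\ you base the argument at $y$. The paper works once and for all at the shared point $x^{\sep}$: it identifies $\mathcal{O}_{X^{\sep},x^{\sep}}$ with the integral closure of $\mathcal{O}^{h}_{X,x}$ in $L^{\sep}$ (using that $x^{\sep}$ is the unique point over its image in the decomposition quotient) and $D_{x^{\sep}/x}$ with $\mathrm{Gal}(L^{\sep}/L^{h}_{x})$, and then invokes the transitivity lemma of the Stacks project (Tag 0BRK) a single time; there is no factorization $\sigma=\rho\tau$ and no appeal to transitivity of $G$. (It also reads the incidence the other way round, treating the $y^{\sep}_{i}$ as the points of $\mathcal{O}_{X^{\sep},x^{\sep}}$, but that is a notational matter.)

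The substantive problem is exactly the proviso you flag, and it is not a routine verification: nothing in your construction forces $x^{\sep}$ and $\tau(x^{\sep})$ to contract to the same prime of $\mathcal{O}^{h}_{X,y}$. Every prime of $\mathcal{O}^{h}_{X,y}$ lying over $\mathfrak{p}_{x}$ has primes of $\mathcal{O}_{X^{\sep},y^{\sep}_{2}}$ above it by lying over, so \emph{all} branches of $\overline{\{x\}}$ at $y$ are ``visible along $y^{\sep}_{2}$''; there is no distinguished one, and since $D_{y^{\sep}_{2}/y}\simeq\mathrm{Gal}(L^{\sep}/L^{h}_{y})$ fixes $\mathcal{O}^{h}_{X,y}$ pointwise, a $\tau$ landing on the wrong branch can never be corrected by any $\rho\in D_{y^{\sep}_{2}/y}$. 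Choosing $\tau$ so that it both sends $y^{\sep}_{1}$ to $y^{\sep}_{2}$ and matches the branch data at $x$ is precisely the simultaneous condition the lemma asserts, so at this point the argument is circular. That the obstruction is real is visible already for $X=\mathbb{A}^{2}=\mathrm{Spec}\,k[u,v]$, the quadratic extension generated by $w$ with $w^{2}=u+1$, and the edge formed by the node of the cubic $v^{2}=u^{2}(u+1)$ and its generic point: the two henselian branches at the node are interchanged only by the nontrivial element of the quadratic Galois group, which stabilizes no point above the node, so any element stabilizing a chosen lift of one vertex preserves each branch; hence the branch-matching step cannot follow from the given data alone and needs an extra hypothesis or argument. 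For what it is worth, the paper's own proof tacitly uses the very same point — Tag 0BRK only gives transitivity on primes over a \emph{single} prime of $\mathcal{O}^{h}_{X,x}$, so it silently assumes $y^{\sep}_{1}$ and $y^{\sep}_{2}$ determine the same branch, which is automatic in the paper's applications where the relevant closures are smooth — so you have correctly located the crux, but as written your proposal does not close it.
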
 

\begin{proof}
Write $\phi^{h}:X^{\sep}\to X^{h}$ for the morphism induced by $(1)\subset D_{x^{\sep}/x}$. We can identify the local ring $\mathcal{O}_{X^{h},x^{h}}$ of $X^{h}$ with respect to $x^{h}:=\phi^{h}(x^{\sep})$ with the Henselization $\mathcal{O}^{h}_{X,x}$. 
The group $D_{x^{\sep}/x}$ now acts on $\mathcal{O}_{X^{\sep},x^{\sep}}$ and its ring of invariants is %
$\mathcal{O}_{X^{h},x^{h}}=\mathcal{O}^{h}_{X,x}$.  Moreover, $\mathcal{O}_{X^{\sep},x^{\sep}}$ is the integral closure of $\mathcal{O}^{h}_{X,x}$ in $L^{\sep}$, since $x^{\sep}$ is the only extension of $\phi^{h}(x^{\sep})$. Since $X$ is relatively unibranch with respect to $x$ and $y$, we find that $y^{\sep}$ and $y'^{\sep}$ are extensions of the same point in $\Spec(\mathcal{O}^{h}_{X,x})$.   %
The statement now follows from \cite[\href{https://stacks.math.columbia.edu/tag/0BRK}{Lemma 0BRK}]{stacks-project}. 

\end{proof}

\begin{example}
Consider again the pair in \cref{exa:NotUnibranch}. We now consider the finite covering $X'\to X$ induced by the polynomial $f(w)=w^2-(x+1)$. There are two extensions over both $\mathfrak{m}=(x,y)$ and $\mathfrak{p}=(y^2-x^2(x+1))$, but there are four extensions of the pair. Since the Galois group is $\Z/2\Z$, there does not exist a $\sigma$ that acts transitively on extensions as in \cref{lem:TransitiveOnChains}.    
\end{example}

\begin{definition}
Let $S^{\sep}=e^{\sep}=x^{\sep}y^{\sep}$ be a section of an edge $e=xy$ in $X^{\sep}$. %
We define 
\begin{equation*}
D_{e^{\sep}/e}=D_{x^{\sep}/x}\cap D_{y^{\sep}/y}
\end{equation*} 
to be the relative decomposition group. 
\end{definition}

\begin{lemma}\label{lem:ReconstructChains2}
Let $E_{e}$ be the set of extensions of $e=xy$ to $X^{\sep}$. The map 
\begin{equation*}
G/D_{e^{\sep}/e}\to E_{e}
\end{equation*}
given by 
$$\sigma D_{e^{\sep}/e}\mapsto \{\sigma(x^{\sep})\supset \sigma(y^{\sep})\}$$ is surjective. 
Two cosets $\sigma_{i}D_{e^{\sep}/e}$ give the same extension if and only if their images under $G/D_{e^{\sep}/e}\to G/D_{x^{\sep}/x}$ and $G/D_{e^{\sep}/e}\to G/D_{y^{\sep}y}$ are the same.   
\end{lemma}
\begin{proof}
Let $e'^{\sep}=x'^{\sep}y'^{\sep}$ be an extension of $e=xy$. Let $\sigma$ be an element such that $\sigma(x^{\sep})=x'^{\sep}$. We then have the extension $x'^{\sep}\supset \sigma(y^{\sep})$. By \cref{lem:TransitiveOnChains}, there is a $\tau\in D_{x'^{\sep}/x}$ such that $\tau(\sigma(y^{\sep}))=y'^{\sep}$. The left coset $\tau\sigma D_{e^{\sep}/e}$ then maps onto the given extension. The last part is immediate. 
\end{proof}
\begin{lemma}\label{lem:RecoveringEdges}
Let $L\subset L'\subset L^{\sep}$ be a separable extension with morphisms $X^{\sep}\to X'\to X$, and let $E_{e,\phi}$ be the set of extensions of $e$ under the map $\phi:X'\to X$. 
The map $$H \tau D_{e^{\sep}/e}\mapsto \{\phi^{\sep}(\tau(x^{\sep}))\supset\phi^{\sep}(\tau(y^{\sep}))\}$$ induces a surjective map $H\bs G/D_{e^{\sep}/e}\to E_{e,\phi}$. Two double cosets give the same extension if and only if their images in $H\bs G/D_{x^{\sep}/x}$ and $H\bs G/D_{y^{\sep}/y}$ are the same. 
\end{lemma}
\begin{proof}
The map is well defined since $X^{\sep}\to X'=X^{\sep}/H$. %
To see that it is surjective, we take an extension $e'$ of $e$ and extend it to an edge $e'^{\sep}$ in $X^{\sep}$. By \cref{lem:TransitiveOnChains}, there is a $\sigma$ such that $\sigma(e^{\sep})=e'^{\sep}$. From this, we directly find that $H\sigma D_{e^{\sep}/e}$ maps to $e'$, and the last part also immediately follows.               
\end{proof}

\begin{proposition}\label{pro:RecoveringTrees}
Let $\phi:X'\to X$ be as in the beginning of \cref{sec:CovSchemes1} and let $S\subset X$ be a finite set with a subtree $S_{T,HD}\subset S_{HD}$. We write $S^{\sep}_{T,HD}$ for an absolute section as in %
\cref{def:AbsoluteSections}. The graph $\phi^{-1}(S_{T,HD})$ can be recovered from the double coset diagrams 
  \begin{equation*}
 \begin{tikzcd}
{} & H\bs G/D_{e^{\sep}/e} \arrow[dl] \arrow[dr] & {} \\
H\bs G/D_{x^{\sep}/x} & {} & H\bs G/D_{y^{\sep}/y}
\end{tikzcd}
 \end{equation*}
 arising from \cref{lem:RecoveringEdges}. In particular, if $S_{HD}$ is already a tree, then this recovers the poset structure of $\phi^{-1}(S)$\footnote{As in the first part of  \cref{lem:RecoveringPoints}, we used the double cosets $H\bs G/D_{e^{\sep}/e}$, $H\bs G/D_{x^{\sep}/x}$ and $H\bs G/D_{y^{\sep}/y}$ here. We will use the corresponding version with reversed double cosets later on in \cref{sec:FiberPosets}.     }.  
\end{proposition}
\begin{proof}
This directly follows from \cref{lem:RecoveringEdges}. 
\end{proof}

\begin{remark}
If $S$ is a single point, then we can replace the decomposition group $D_{x^{\sep}/x}$ in \cref{pro:RecoveringTrees} by a conjugate $\sigma D_{x^{\sep}/x}\sigma^{-1}=D_{\sigma(x^{\sep})/x}$ without changing the reconstruction result. If $S$ however is a more complicated tree, then the specific group $D_{x^{\sep}/x}$ is needed to reconstruct the relative poset structure. %

This difference is the reason for the different coverings in \cref{fig:Covering}, as the local groups are all conjugate, but the coverings are different. For instance, we can decorate our tree with the groups $D_{x}$ indicated by the $2$-cycles in \cref{fig:TreeWithGroups}. 

\begin{figure}[h]
\begin{center}
\scalebox{0.5}{
 \begin{tikzpicture}[line cap=round,line join=round,x=1cm,y=1cm]
\clip(0,-0.5) rectangle (10.5,8.5);
\draw [line width=1.5pt] (4,3)-- (2,5);
\draw [line width=1.5pt] (4,3)-- (1,2);
\draw [line width=1.5pt] (4,3)-- (7,3);
\draw [line width=1.5pt] (7,3)-- (10,3);
\begin{scriptsize}
\draw [fill=qqqqff] (4,3) circle (5pt);
\draw [fill=qqccqq] (2,5) circle (5pt);
\draw[color=black] (2,5.8) node {\Large{$(12)$}};
\draw [fill=ffqqtt] (1,2) circle (5pt);
\draw[color=black] (1,2.8) node {\Large{$(23)$}};
\draw [fill=qqqqff] (7,3) circle (5pt);
\draw[color=black] (7,3.8) node {\Large{$(12)$}};
\draw [fill=qqqqff] (10,3) circle (5pt);
\draw[color=black] (10,3.8) node {\Large{$(23)$}};
\end{scriptsize}
\end{tikzpicture}
}
\end{center}
\caption{\label{fig:TreeWithGroups}A tree together with a set of decomposition groups. The local structure of the induced finite covering only depends on the conjugacy class of the group. The global structure of the finite covering depends heavily on the conjugacy class however, see \cref{fig:Covering}.}
\end{figure}
Here the edges are assumed to have trivial groups. The vertex without a label also has the trivial group. 
 The double coset spaces $D_{x}\backslash S_{3}/\langle(12)\rangle$ then glue to give the first covering in \cref{fig:Covering}. However, if we change the group $D_{x}=\langle(23)\rangle$ on the right vertex to $D_{x}=\langle(12)\rangle$, then we obtain the second covering. Note here that the local behavior of the covering does not change, since we have just moved to a different conjugacy class.  All of the coverings in \cref{fig:Covering} occur as $3:1$ coverings of the projective line by the results in \cite{ABBR2015} (see \cite{Helminck2024} for an interpretation in terms of Galois categories).  %
In our algorithms we reconstruct the local diagrams in %
\cref{pro:RecoveringTrees} rather than the local groups, which are enough to reconstruct poset structures. %
\end{remark}

We conclude this section with a set of examples to show that the results above are sharp. 

\begin{example}\label{exa:CounterexampleExtensions}
In general, the map $H\bs G/D_{e^{\sep}/e}\to E_{e}$ is not a bijection. %
Consider for instance the ring $B=\mathbb{Q}[X_{1},X_{(12)},X_{(13)},X_{(23)},X_{(123)},X_{(132)},W]$, where the subscripts are interpreted as elements of the group $G=S_{3}$. We endow $B$ with the standard grading $\mathrm{deg}(X_{\sigma})=\mathrm{deg}(W)=1$. 
 The group $G$ acts on $B$ through $\sigma(X_{\tau})=X_{\sigma\tau}$ and $\sigma(W)=W$. Define $\overline{X}=\mathrm{Proj}(B)=\mathbb{P}^{6}_{\Q}$. We then also have an action of $G$ on $\overline{X}$, with quotient map $\overline{X}\to X$. 
 Set $H=\angles{(12)}$ and 
$N=\angles{(123)}$, and let 
 \begin{align*}
 \mathfrak{q}_{1}&=(X_{1}+X_{(12)}),\\
 \mathfrak{q}_{2}&=(X_{1}+X_{(12)},X_{(123)}+X_{(13)},X_{(132)}+X_{(23)},X_{1}X_{(123)}X_{(132)}+W^3),
 \end{align*}
so that $\mathfrak{q}_{1}\subset\mathfrak{q}_{2}$. These are homogeneous prime ideals, corresponding to points $x_{1}$ and $x_{2}$ in $\overline{X}$. 
Let $y_{1}$ and $y_{2}$ be the images of $x_{1}$ and $x_{2}$ in $X$, and let $e_{x}=x_{1}x_{2}$ and $e_{y}=y_{1}y_{2}$. We then have
\begin{align*}
D_{x_{1}/y_{1}}&=H,\\
D_{x_{2}/y_{2}}&=N,\\
D_{e_{x}/e_{y}}&=(1).
\end{align*}
We now consider the different extensions of the points in $X$ in $X_{H}=\overline{X}/H$. Note that there are two points lying over $y_{1}$ as $|D_{x_{1}/y_{1}}\bs G/H|=2$, and only one lying over $y_{2}$ since $|D_{x_{2}/y_{2}}\bs G/H|=1$. There are thus two extensions of $e_{y}$, even though 
$|D_{e_{x}/e_{y}}\bs G/H|=3$. 

We can also use this example to construct extensions of triples on which the action of $G$ is not transitive, so that the conditions of \cref{lem:ReconstructChains2} are sharp.
Consider the homogeneous prime ideal 
$$I=(X_{1},
    X_{(123)} + X_{(13)},
    X_{(132)} + X_{(23)},
    X_{(12)},
    X_{(13)}^2 + X_{(23)}^2, W)$$
    with corresponding points $x_{3}\in \overline{X}$ and $y_{3}\in{X}$. Note that $x_{1}\leq x_{2}\leq x_{3}$. %
The stabilizer of $x_{3}$ is easily seen to be $H$. %
The map of posets can be found in \cref{fig:NonTransitive}. Note that there are $18$ extensions of the triple $y_{0}\leq y_{1}\leq y_{2}$, so that the action is not transitive. This also implies that the action of $G=\mathrm{Gal}(L^{\sep}/L)$ is not transitive on extensions in $X^{\sep}$ of this triple.    
\begin{figure}[ht]
\scalebox{0.4}{
\begin{tikzpicture}[line cap=round,line join=round,>=triangle 45,x=1cm,y=1cm]
\clip(4,1.46) rectangle (14,14.14);
\draw [line width=2.4pt] (6,13)-- (9,11);
\draw [line width=2.4pt] (6,7)-- (9,9);
\draw [line width=2.4pt] (6,10)-- (9,11);
\draw [line width=2.4pt] (6,10)-- (9,9);
\draw [line width=2.4pt] (9,11)-- (12,13);
\draw [line width=2.4pt] (9,9)-- (12,7);
\draw [line width=2.4pt] (9,11)-- (12,10);
\draw [line width=2.4pt] (9,9)-- (12,10);
\draw [line width=2.4pt,loosely dashed] (6,13)-- (9,9);
\draw [line width=2.4pt,loosely dashed] (6,7)-- (9,11);
\draw [line width=2.4pt,loosely dashed] (12,13)-- (9,9);
\draw [line width=2.4pt,loosely dashed] (12,7)-- (9,11);
\draw [line width=2.4pt] (6,4)-- (9,4);
\draw [line width=2.4pt] (9,4)-- (12,4);
\draw [->,line width=2pt] (9,7.5) -- (9,5.5);
\begin{scriptsize}
\draw [fill=ududff] (6,13) circle (5pt);
\draw [fill=ududff] (9,11) circle (5pt);
\draw [fill=ududff] (6,7) circle (5pt);
\draw [fill=ududff] (9,9) circle (5pt);
\draw [fill=ududff] (6,10) circle (5pt);
\draw [fill=ududff] (12,13) circle (5pt);
\draw [fill=ududff] (12,7) circle (5pt);
\draw [fill=ududff] (12,10) circle (5pt);
\draw [fill=ududff] (6,4) circle (5pt);
\draw [fill=ududff] (9,4) circle (5pt);
\draw [fill=ududff] (12,4) circle (5pt);
\end{scriptsize}
\end{tikzpicture}
}
\caption{\label{fig:NonTransitive}
The map of Hasse diagrams in \cref{exa:CounterexampleExtensions}. There is an action of $S_{3}$ on the top graph, and this action is not transitive on pairs of horizontal edges, corresponding to chains of length two. 
}
\end{figure}    
\end{example}

\subsection{Reconstructing posets}\label{sec:FiberPosets}

Let $S_{HD,B}$ be the barycentric subdivision of the Hasse diagram $S_{HD}$ of $S$, and let $\mathcal{G}$ be the category of profinite groups.  
We now extend the results from the previous section to more general subgraphs %
using a group-theoretic enhancement
\begin{equation*}
\mathcal{F}: S_{HD,B}\to \mathcal{G},
\end{equation*}
see  \cref{def:GroupTheoreticEnhancement}.   %
We first fix a spanning tree $S_{T,HD}$ of the Hasse diagram $S_{HD}$ of $S$, together with an absolute section $S^{\sep}_{T,HD}$ of $S_{T,HD}$. We set $\mathcal{F}(x)=D_{x^{\sep}/x}$ for %
every corresponding vertex in %
the barycentric subdivision $S_{HD,B}$. For every edge $e\in E(S_{T,HD})$, we set $\mathcal{F}(e)=D_{e^{\sep}/e}$, %
where $e^{\sep}$ is the corresponding section of $e$ in $S^{\sep}_{T,HD}$. The corresponding homomorphisms are %
the natural ones induced from the inclusions $D_{e^{\sep}/e}\subset D_{x^{\sep}/x},D_{y^{\sep}/y}$.  %

Let $e=xy\in E(S_{HD})\bs E(S_{T,HD})$, and let $x^{\sep}$ and $y^{\sep}$ be the two points lying over $x$ and $y$ in $S^{\sep}_{T,HD}$. Note that these do not necessarily form an edge in $S^{\sep}$. %
Let $e'=x^{\sep}y'^{\sep}$ be an extension of $e$ starting at $x^{\sep}$. By \cref{lem:TransitiveOnChains}, we can find an element in $D_{x^{\sep}/x}$ such that $\sigma(y'^{\sep})=y^{\sep}$. From this, we obtain the equality $\sigma D_{y'^{\sep}/y}\sigma^{-1}=D_{y^{\sep}/y}$, which we interpret as an isomorphism  %
$\gamma_{\sigma}: D_{y'^{\sep}/y}\to D_{y^{\sep}/y}$. 
We set $\mathcal{F}(e)=D_{e'^{\sep}/e}$ %
for every vertex $e$ in $S_{HD,B}$ corresponding to $e\in E(S_{HD})\bs E(S_{T,HD})$. We have an inclusion %
$D_{x^{\sep}/x}\supset D_{e'^{\sep}/e},$ which gives the first map of profinite groups for the functor $\mathcal{F}$. %
The other map $D_{e'^{\sep}/e}\to D_{y^{\sep}/y}$ %
is then obtained by composing the inclusion $D_{e'^{\sep}/e}\subset D_{y'^{\sep}/y}$ with %
the conjugation isomorphism $\gamma_{\sigma}:D_{y'^{\sep}/y}\to D_{y^{\sep}/y}$. %
By doing this for every missing edge in $E(S_{HD})\bs E(S_{T,HD})$, we obtain the desired functor $\mathcal{F}: S_{HD,B}\to \mathcal{G}$. 
\begin{definition}
The functor $\mathcal{F}:S_{HD,B}$ contructed above is a group-theoretic enhancement of $S\subset X$. We call $\mathcal{C}(\mathcal{F})$ %
the glued category associated to this enhancement. %
\end{definition}

Recall that an object in $\mathcal{C}(\mathcal{F})$ consists of a collection of topological spaces $X_{i}$ with a continuous action of the group $G_{i}=\mathcal{F}(i)$, and a set of gluing data for adjacent spaces.   %
We now construct a universal covering, which will be %
a single object in the category $\mathcal{C}(\mathcal{F})$. To construct the various transition maps in the $2$-limit, we first need a simple lemma whose proof we leave to the reader. 
\begin{lemma}\label{lem:EquivarianceTransfer}
Let $D,H\subset {G}$ be two subgroups and let $\sigma\in {G}$. 
Consider the map $\phi_{\sigma}: G/H\to G/H$ given by $\tau{H}\mapsto \sigma\tau{H}$, and the left-actions 
\begin{align*}
s_{i}:D\to \mathrm{Aut}(G/H)
\end{align*}
for $i=1,2$ given by $s_{1}(d,gH)=dgH$ and %
$s_{2}(d,g)=\sigma d \sigma^{-1}gH$.  
Endow the first copy of $G/H$ in $\phi_{\sigma}$ with the $D$-structure induced by $s_{1}$ and the second with the $D$-structure induced by $s_{2}$.  %
Then $\phi_{\sigma}$ is $D$-equivariant.  
\end{lemma}

We can now define the universal poset covering as an object in $\mathcal{C}(\mathcal{F})$. %
\begin{definition}\label{def:UniversalPosetCovering}
The universal poset covering is the following object of $\mathcal{C}(\mathcal{F})$. For every vertex $x$ of $S_{HD,B}$, the corresponding decomposition group $D_{x}$ acts continuously on the topological group $G$ on the left, so that we can consider $G$ as an object of the corresponding category $D_{x}\mhyphen\text{Spaces}$. For the transfer maps, we do the following.    
For edges in $S_{HD,B}$ arising from $S_{T,HD}$, we take the identity. %
For an edge $e=xy$ not in $E(S_{T,HD})$, we take the identity on the directed edge $e_{x}=(x,e)$ in $E(S_{HD,B})$. For the directed edge $e_{y}=(y,e)$, we apply the $D_{e'^{\sep}/e}$-equivariant isomorphism from %
\cref{lem:EquivarianceTransfer} to $G$ with $\sigma$ as in the definition of $e'^{\sep
}=x^{\sep}y'^{\sep}$, so that  %
$\sigma(y'^{\sep})=y^{\sep}$. %
We denote the corresponding universal poset covering by $S_{univ}\in \text{Ob}(\mathcal{C}(\mathcal{F}))$.  By replacing $G$ by $G/H$ for a closed subgroup $H$, we then also obtain the object $S_{H}$ corresponding to $H$. %
\end{definition} 

\begin{remark}
Note that for a given edge $e$ or tree $S_{T,HD}$ inside $S_{HD}$, we can always trivialize the isomorphisms in  \cref{def:UniversalPosetCovering} and view the data as a group $G$ together with subgroups $D_{x^{\sep}/x},D_{e^{\sep}/e}\subset G$ acting by left multiplication. This is for instance the point of view taken in \cite{Helminck2024} on the level of Berkovich spaces, where a continuous version for coverings of curves is studied. %
It is only when we consider loops inside the Hasse diagram that we have to be careful about gluing the different actions. 
\end{remark}

We now define a geometric realization of $S_{univ}$, which can be seen as the underlying poset of the object in \cref{def:UniversalPosetCovering}. By construction, we have an object $G$ over every vertex of $S_{HD,B}$, and we can compare the actions of the decomposition groups on $G$ over neighboring vertices. In comparing the actions of these decomposition groups, we emphasize that one first has to apply the transfer maps from \cref{lem:EquivarianceTransfer}.   
\begin{definition}\label{def:GeometricRealization}
Consider the set $|S_{univ}|=\bigsqcup_{x\in S}D_{x^{\sep}/x}\bs G$. We endow $|S_{univ}|$ with a poset structure by describing its Hasse diagram. %
Let $e=xy$ be an edge in the Hasse diagram of $S$, and let $\tau_{x}$ and $\tau_{y}$ be representatives of $D_{x^{\sep}/x}$-orbits and $D_{y^{\sep}/y}$-orbits in $G$ respectively. %
There is an edge between the $D_{x^{\sep}/x}$-orbit of $\tau_{x}$ and the $D_{y^{\sep}/y}$-orbit of $\tau_{y}$ if and only if there exists a $\tau_{e}\in{G}$ such that the $D_{x^{\sep}/x}$-orbit of $\tau_{e}$ is the $D_{x^{\sep}/x}$-orbit of $\tau_{x}$, and the $D_{y^{\sep}/y}$-orbit of $\tau_{e}$ is the $D_{y^{\sep}/y}$-orbit of $\tau_{y}$. 
One easily sees that this is independent of the chosen representatives. Note that the actions of $D_{x^{\sep}/x}$ and $D_{y^{\sep}/y}$ are calculated after applying the transfer maps.

More generally, let $H$ be a closed subgroup of $G$. %
We then similarly define the geometric realization $|S_{H}|$ of $S_{H}$ by replacing $G$ by $G/H$. The elements of this poset are $D_{x^{\sep}/x} \bs (G/H)$,  %
and the relations are as follows: there is an edge between the $D_{x^{\sep}/x}$-orbit of $\tau_{x}H$ and the $D_{y^{\sep}/y}$-orbit of $\tau_{y}H$ if and only if there exists a $\tau_{e}\in{G}$ such that the $D_{x^{\sep}/x}$-orbit of $\tau_{e}H$ is the $D_{x^{\sep}/x}$-orbit of $\tau_{x}H$, and the $D_{y^{\sep}/y}$-orbit of $\tau_{e}H$ is the $D_{y^{\sep}/y}$-orbit of $\tau_{y}H$.  
Equivalently, we have the equalities of double cosets 
$D_{x^{\sep}/x}\tau_{e}H=D_{x^{\sep}/x}\tau_{x}H$ and 
$D_{y^{\sep}/y}\tau_{e}H=D_{y^{\sep}/y}\tau_{y}H$. 
As before, we calculate the action of the decomposition groups after applying the transfer maps.  %
\end{definition} 

\begin{remark}\label{rem:RephraseReconstruction}
From the equalities $D_{x^{\sep}/x}\tau_{e}H=D_{x^{\sep}/x}\tau_{x}H$ and 
$D_{y^{\sep}/y}\tau_{e}H=D_{y^{\sep}/y}\tau_{y}H$ in \cref{def:GeometricRealization}, one obtains the equations 
$d_{x}\tau_{e}{h_{x}}=\tau_{x}$ and 
$d_{y}\tau_{e}h_{y}=\tau_{y}$.
By rewriting this, we obtain the single equality $d_{x}^{-1}\tau_{x}h_{x}^{-1}=d_{y}^{-1}\tau_{y}h_{y}^{-1}$, and thus $\tau_{x}=d_{x}d_{y}^{-1}\tau_{y}h_{y}^{-1}h_{x}$. One might be tempted to take the latter as the defining equation for an edge over $e$, in analogy with the equations arising from double coset spaces. That is, there is an edge between the orbits of $\tau_{x}H$ and $\tau_{y}H$ if and only if there exist $d_{x}\in D_{x^{\sep}/x}$, $d_{y}\in D_{y^{\sep}/y}$ and $h\in{H}$ such that $\tau_{x}=d_{x}d_{y}^{-1}\tau_{y}h$. We note however that this in general does {not} define an equivalence relation, due to the possible non-normality of the decomposition groups.   
\end{remark}

\begin{remark}
If we interpret $|S_{univ}|$ as the set of right coset classes of $D_{x^{\sep}/x}$ in $G$, then $H$ acts on the right.  The poset $|S_{H}|$ is then the natural quotient of $|S_{univ}|$ under this action. Alternatively, we could also have worked with the set of left-coset classes $\bigsqcup_{x\in S} G/D_{x^{\sep}/x}$ in \cref{def:GeometricRealization}. The subgroup $H$ acts on the left on these classes and %
we again obtain $|S_{H}|$ as a quotient of $|S_{univ}|$. 
\end{remark}

The following is the discrete poset version of \cite[Theorem 2.17]{Helminck2024} for Berkovich spaces.

\begin{theorem}\label{thm:MainThmv2}
Let $H$ be a closed subgroup of $G$ with map of normalizations $\phi_{H}: X_{H}\to X$. There is an isomorphism $|S_{H}|\to \phi^{-1}_{H}(S)$ of partially ordered sets. For an inclusion of closed subgroups $H_{1}\to H_{2}$, we have a commutative diagram 
\begin{equation*}
\begin{tikzcd}
{|S_{H_{1}}|} \arrow[d] \arrow[r] & {\phi^{-1}_{H_{1}}(S)} \arrow[d] \\
{|S_{H_{2}}|}  \arrow[r]           & {\phi^{-1}_{H_{2}}(S)}    
\end{tikzcd}
\end{equation*}
\end{theorem}
\begin{proof}
We note that throughout the proof, we use $\phi_{H}:X_{H}\to X$ for the morphism to $X$, and $\phi^{H}:X^{\sep}\to X_{H}$ for the morphism from $X^{\sep}$. 
By \cref{rem:RephraseReconstruction} and 
\cref{pro:RecoveringTrees}, it follows that the data over $S_{T,HD}$ recovers the structure of the graph $\phi_{H}^{-1}(S_{T,HD})$. We have to check that this procedure also recovers the remaining parts of $\phi^{-1}_{H}(S)$.  %
We recall that $\sigma(y'^{\sep})=y^{\sep}$. Consider the diagram
\begin{equation*}
\begin{tikzcd}
{D_{y'^{\sep}/y}\bs G/H}%
\arrow[d] \arrow[r] & {{D_{y^{\sep}/y}\bs G/H}}%
\arrow[d] \\
{\phi_{H}^{-1}(y)} \arrow[r]           & {\phi_{H}^{-1}(y)}          
\end{tikzcd},
\end{equation*}
where the top horizontal arrow is induced from the map $G/H\to G/H$ defined by $\tau{H}\mapsto \sigma\tau{H}$ as in \cref{lem:EquivarianceTransfer}, the bottom horizontal arrow is the identity and the vertical arrows are induced from \cref{lem:RecoveringPoints}. We claim that this diagram is commutative. The second vertical arrow sends the $D_{y^{\sep}/y}$-orbit of $\tau$ to $\phi^{H}(\tau^{-1}(y^{\sep}))$ and the first vertical arrow sends the $D_{y'^{\sep}/y}\tau{H}$ to $\phi^{H}(\tau^{-1}(y'^{\sep}))$. %
We then find that $D_{y'^{\sep}/y}\sigma\tau{H}$ is sent to $\phi^{H}(\tau^{-1}\sigma^{-1}(y^{\sep}))=\phi^{H}(\tau^{-1}(y'^{\sep}))$, as desired. It now follows from this that the data in the construction of $|S_{H}|$ (see \cref{def:GeometricRealization}) recovers the double coset diagrams corresponding to $D_{x^{\sep}/x}$ and $D_{y'^{\sep}/y}$, which exactly give the different edges over $e=xy$ by \cref{lem:RecoveringEdges} since the local section is given by $e'^{\sep}=x^{\sep}y'^{\sep}$. The compatibilities in the theorem are  immediate.     
\end{proof}

\begin{remark}
Note that if we instead used the original sets without the map from \cref{lem:EquivarianceTransfer}, then the diagram in the proof of \cref{thm:MainThmv2} would not commute. 
\end{remark}

\section{Power series algorithms}\label{sec:Algorithms}

In this section we explore various algorithmic aspects of \cref{thm:MainThmv2}. We first show how one can interpret the group-theoretic data from the previous sections in terms of roots of polynomials and their approximations in \cref{sec:FromTopoiToPowerSeries}. We review power series over regular local rings in \cref{sec:PowerSeries}, and in \cref{sec:NPAlgorithm} we give a symbolic Newton-Puiseux algorithm to calculate power series expansions of the roots of a polynomial $f(z)$ that splits completely over the strict Henselization of $A=K[x_{1},...,x_{n}]$ with respect to $(x_{1},...,x_{n})$. These algorithms will also work over arithmetic rings such as $\mathcal{O}_{K,\mathfrak{p}}[x_{1},...,x_{n}]$ and $\mathcal{O}_{K,\mathfrak{p}}[x_{0},...,x_{n}]/(x_{0}\cdots x_{r}-\pi^{k})$, where $\mathcal{O}_{K,\mathfrak{p}}$ is the localization of the ring of integers $\mathcal{O}_{K}$ of a number field $K$ at a prime $\mathfrak{p}$, or a completion thereof. %

\subsection{From group theory to power series}\label{sec:FromTopoiToPowerSeries}
A finite covering $\phi: X'\to X$ of normal connected Noetherian schemes corresponds to a finite separable extension $L=K(X)\to K(X')=L'$. %
We can represent this finite extension using an irreducible separable polynomial $f(z)\in L[z]$ through an   isomorphism $L'\simeq L[z]/(f(z))$. We fix this isomorphism throughout this section. Let $L^{\sep}$ be a separable closure of $L$. As in \cref{lem:RecoveringPoints}, we will fix a specific embedding of $L'$ into $L^{\sep}$. For a polynomial $f(z)$ as above, we can consider the set of roots $V(f(z))\subset L^{\sep}$ inside a separable closure $L^{\sep}$ of $L$.  %
Our choice of an
embedding of $L'$ into $L^{\sep}$ then gives a specific root $\alpha\in L^{\sep}$ of $f(z)$. We will denote this specific root by $\alpha_{1}$ sometimes.  %
Note that the decomposition groups $D_{x^{\sep}/x}$ for $x\in{X}$ act on the zero set $V(f(z))$. We have the following characterization of the fibers of $X'\to X$ in terms of $V(f(z))$.      

\begin{lemma}\label{lem:RootsDecomposition}
Let $\phi:X'\to X$ be a finite covering with embedding of function fields $L'\to L^{\sep}$, generating polynomial $f(z)\in L[z]$, subgroup $H$, specified root $\alpha$ and zero set $V(f(z))\subset L^{\sep}$, as above.  %
There is a bijection $D_{x^{\sep}/x}\bs V(f(z))\simeq \phi^{-1}(x)$. Explicitly, we write $\alpha_{i}=\tau(\alpha)$ for some $\tau\in G$, and we send this root to $\phi_{H}(\tau^{-1}(x^{\sep}))$.  %
\end{lemma}
\begin{proof}
We can send $\tau(\alpha)$ to $\tau{H}$ to obtain a bijection $V(f(z))\to G/H$. Note that this is independent of our choice of $\tau$, as any two automorphisms $\tau_{1}$ and $\tau_{2}$ with $\tau_{1}(\alpha)=\tau_{2}(\alpha)$ differ by an element of $H$ (namely, $\tau^{-1}_{2}\tau_{1}$). Together with \cref{lem:RecoveringPoints}, this directly gives the desired statement.  %
\end{proof}

In other words, in order to find the different extensions of $x$, we only have to find the orbits of the roots of $f(z)$ under the action of a decomposition group $D_{x^{\sep}/x}$. To find this, we can do the following. Let $M\supset L$ be a finite extension with map of normalizations $X_{M}\to X$. %
Suppose that $f(z)$ splits completely over $M^{sh}$, which is the fraction field of the strict Henselization of a point $x_{M}$ lying over $x$\footnote{In practice, we can for instance use Hensel's lemma to conclude that the polynomial splits completely over the strict Henselization.  }. We then have to find the action of $\mathrm{Gal}(M^{sh}/L^{h})$ on $V(f(z))$. 
For morphisms of semistable models, it suffices to take Kummer extensions for $M$, see \cref{lem:KummerExtensions}.  %

To calculate the action of $G_{0}=\mathrm{Gal}(M^{sh}/L^{h})$, it suffices to calculate the action of this group on sufficiently precise finite approximations $\gamma_{i}$ of the roots $\alpha_{i}$. We explain this in more detail here. 
\begin{definition}[Separating approximations]\label{def:SeparatingApproximations}
Let $f(z)\in K(X)[z]$ be a separable irreducible polynomial that generates a field extension $K(X)\to K(X')$ with map of normalizations $X'\to X$, and fix a point $x\in{X}$. Let $M\supset K(X)$ be a fixed finite extension with map of normalizations $X_{M}\to X$ as above. 
Let $A$ be the strict Henselization of a point $x_{M}$ of $X_{M}$ lying over $x$ with maximal ideal $\mathfrak{m}$. We will assume without loss of generality that the $\alpha_{i}$ already lie in $A$, rather than the fraction field. Let $I\subset A$ be a nonzero ideal. We say that $\gamma_{i}\in{A}$ is an $I$-approximation of height $k\in\mathbb{N}$ of a root $\alpha_{i}$ if
\begin{equation*}
\alpha_{i}-\gamma_{i}\in{I^{k}}. %
\end{equation*}
Let $S\subset A$ be a finite set. We say that $S$ is an $I$-approximation set of height $k$ for $f(z)$ if for every root $\alpha_{i}$, there is a $\gamma_{i}$ such that $\gamma_{i}$ is an $I$-approximation of height $k$.  
Let $S=\{\gamma_{1},...,\gamma_{n}\}$ be an $I$-approximation set of height $k$ for $f(z)$. %
We say that $k$ is a separating height if the images of the $\gamma_{i}$ in $A/I^{k}$ are distinct. In particular, the $\gamma_{i}$ are all distinct. In this case we call $S$ a separating approximation set of height $k$.  
\end{definition}
\begin{remark}
One can take $\alpha_{i}=\gamma_{i}$ in this definition, but this is of course not desirable. We will see in the next section how we can construct suitable $\gamma_{i}$ for regular local rings.  
\end{remark}
Let $G$ be a group acting on $A$, and let $I$ be an ideal that is $G$-invariant, so that $\sigma(I)=I$. This implies that $\sigma(I^{m})=I^{m}$ for any $m\geq{1}$.  
Let $S$ be an $I$-separating approximation set of height $k$ for a polynomial $f(z)\in K(X)[z]$. We define an action of $G$ on these approximations as follows. Suppose that 
$$\alpha_{i}-\gamma_{i}\in I^{k}.$$
Applying $\sigma\in G$ to this and noting that $\sigma(I)=I$, we then find 
\begin{equation*}
\sigma(\alpha_{i})%
-\sigma(\gamma_{i})\in I^{k}. %
\end{equation*}  
We then know that $\sigma(\gamma_{i})-\gamma_{\sigma(i)}$ for a unique $\sigma(i)\in[n]=\{1,...,n\}$ by our definition of a separating approximation set. %
We then send $\gamma_{i}$ to $\gamma_{\sigma(i)}$. We now see that we have an isomorphism of $G$-sets %
\begin{equation*}
\{\alpha_{1},...,\alpha_{n}\}\to \{\gamma_{1},...,\gamma_{n}\}.
\end{equation*} 

Suppose that $G=G_{0}$, as before. To calculate the $D_{x^{\sep}/x}$-orbits of the $\alpha_{i}$, we now see that we have to calculate the $G$-orbits of the $\gamma_{i}$. %
Thus, to calculate the data in \cref{fig:Correspondences}, it suffices to calculate the action on a separating approximation set. Explicitly calculating this action can be very hard, as it requires the calculation of a local Galois group. For our applications however, one does not need the full group, as it suffices to know whether two approximations are in the same orbit. This usually only requires a single irreducibility check. We will focus more on these issues for semistable models in \cref{sec:FindingGaloisOrbits}. We will see in \cref{sec:PowerSeries} how power series expansions can be used to make the approximations $\gamma_{i}$ more explicit when $A$ is a regular local ring. In our applications to semistable models, we can always reduce to this case.  

\subsubsection{Relative approximations}

We now discuss the relative version of the above. %
Let $x\geq{y}$ be two points in $X$. We assume as always that $X$ is relatively unibranch with respect to these. We fix our notation as in \cref{def:SeparatingApproximations} and let $x_{M}\geq y_{M}$ be an extension of this chain to $X_{M}$. Let $A$ be the strict Henselization of $X_{M}$ at $x_{M}$. We write $\mathfrak{m}$ and $\mathfrak{p}$ for the two prime ideals in $A$ corresponding to $x_{M}$ and $y_{M}$ respectively. 
\begin{definition}\label{def:CompatibleSeparatingSets}
Let $S=\{\alpha_{1},...,\alpha_{n}\}\subset A$ be the roots of a polynomial $f(z)$.  
Let $\gamma_{i,\mathfrak{p}}$ be a separating set of $\mathfrak{p}$-approximations for $S$ of height $k$, and let %
$\gamma_{i,\mathfrak{m}}$ be a separating set of $\mathfrak{m}$-adic approximations for $S$ of height $k$. We assume that the labeling is preserved, so that $\gamma_{i,\mathfrak{p}}-\gamma_{i,\mathfrak{m}}\in\mathfrak{m}^{k}$. We call this a compatible separating set of $(\mathfrak{m},\mathfrak{p})$-approximations of height $k$.     %
\end{definition}

To calculate the diagram of orbits as in \cref{pro:RecoveringTrees}, let $\sigma\in D_{e^{\sep}/e}$. Here $e^{\sep}$ is an edge in $X^{\sep}$ that maps to $e_{M}=x_{M}y_{M}$. Note that both $\mathfrak{p}$ and $\mathfrak{m}$ are invariant with respect to this action. We can now construct the diagram in \cref{pro:RecoveringTrees} as follows. Let $(\gamma_{i,\mathfrak{p}},\gamma_{i,\mathfrak{m}})$ be a compatible separating set of $(\mathfrak{m},\mathfrak{p})$-approximations of height $k$ (in principle, we could again take the $\alpha_{i}$ here, but this is not desirable in practice). For $D_{e}\backslash G/H$, we the $D_{e^{\sep}/e}$-sets $S_{\mathfrak{p}}=\{\gamma_{i,\mathfrak{p}}\}$ and $S_{\mathfrak{m}}=\{\gamma_{i,\mathfrak{m}}\}$, with the isomorphism coming from \cref{def:CompatibleSeparatingSets}. We now consider $S_{\mathfrak{p}}$ as a $D_{y^{\sep}/y}$-set, and $S_{\mathfrak{m}}$ as a $D_{x^{\sep}/x}$-set. This diagram then automatically reconstructs the poset structure over $e$ by \cref{pro:RecoveringTrees}.

\begin{remark}
For coverings of semistable models, we will have that the action of $D_{e^{\sep}/e}$ is the same as the action of $D_{x^{\sep}/x}$, see \cref{lem:KummerOrbitRemark}. %
\end{remark}  

\subsection{Power series in regular local rings}\label{sec:PowerSeries}

The output of our algorithms in the upcoming sections will be certain power series approximations of the roots of a polynomial. We review these notions here for general regular local Noetherian rings.   
Let $(A,\mathfrak{m})$ be a regular local Noetherian ring with residue field $k:=A/\mathfrak{m}$. %
Suppose that we are given a set of elements $x_{1},...,x_{r}\in\mathfrak{m}$ whose images in the $k$-vector space %
$\mathfrak{m}/\mathfrak{m}^{2}$ form a basis of that vector space. %
Here $r=\mathrm{dim}(A)$ by regularity. This basis can be used to give an isomorphism between the graded ring $\mathrm{gr}_{\mathfrak{m}}(A)$ and the polynomial ring over $k$ in $r$ variables, see \cite[\href{https://stacks.math.columbia.edu/tag/00NO}{Lemma 00NO}]{stacks-project}. We will interpret this result using the concept of monomials. %
\begin{definition}\label{MonomialElements}
Let $(A,\mathfrak{m})$ be a regular local Noetherian ring and let $\{x_{1},...,x_{r}\}$ be a set of generators for $\mathfrak{m}$. %
An element $m\in{A}$ of the form
\begin{equation*}
m=x_{1}^{i_{1}}\cdots{x_{r}^{i_{r}}}
\end{equation*}
for $i_{1},...,i_{r}\in\mathbb{N}$ 
is a {\it{monomial}} in the $x_{i}$ of total degree $\mathrm{deg}(m):=i_{1}+...+i_{r}$. We will also think of these monomials as being represented by vectors in $\mathbb{N}^{r}$. That is, every monomial $x_{1}^{i_{1}}\cdots{x_{r}^{i_{r}}}$ corresponds to the vector $(i_{1},...,i_{r})$. %
\end{definition}
The set of all monomials in $A$ is denoted by $M$. For every $j\in\mathbb{N}$, we can consider the set of monomials of total degree $j$: $M_{j}=\{m\in{M}:\mathrm{deg}(m)=j\}$. These monomials are elements of the ideal $\mathfrak{m}^{j}$. Moreover, using the isomorphism $\text{gr}_{\mathfrak{m}}(A)\simeq k[z_{1},...,z_{r}]$ one easily sees that they form a basis of the quotient $\mathfrak{m}^{j}/\mathfrak{m}^{j+1}$.   %
\begin{lemma}\label{BasisVectorSpaces}
Consider the set $M_{j}$ of all monomials in the $x_{i}$ of total degree $j$ and let $\overline{M}_{j}$ be the image of this set in the $k$-vector space $\mathfrak{m}^{j}/\mathfrak{m}^{j+1}$. Then $\overline{M}_{j}$ is a basis for this vector space.  %
\end{lemma}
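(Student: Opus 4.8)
The plan is to prove \cref{BasisVectorSpaces} by invoking the structural result cited just before it, namely the isomorphism $\mathrm{gr}_{\mathfrak{m}}(A)\simeq k[z_{1},\dots,z_{r}]$ from \cite[\href{https://stacks.math.columbia.edu/tag/00NO}{Lemma 00NO}]{stacks-project}, and then tracking what the monomials $\overline{M}_j$ correspond to under this isomorphism. First I would recall that the isomorphism is the graded $k$-algebra map determined by sending $z_i$ to the class of $x_i$ in $\mathfrak{m}/\mathfrak{m}^2$; in particular it is \emph{graded}, so it restricts to a $k$-vector space isomorphism in each degree $j$, identifying $\mathfrak{m}^j/\mathfrak{m}^{j+1}$ with the space of homogeneous polynomials of degree $j$ in $k[z_1,\dots,z_r]$. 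Under this identification the class of the monomial $x_1^{i_1}\cdots x_r^{i_r}$ (with $i_1+\dots+i_r=j$) maps to $z_1^{i_1}\cdots z_r^{i_r}$, because the isomorphism is a $k$-algebra map and the class of a product is the product of the classes in the associated graded ring (this uses that the product of elements of $\mathfrak{m}$ lands in the appropriate power of $\mathfrak{m}$, so multiplication in $\mathrm{gr}_\mathfrak{m}(A)$ is exactly what records this).

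Next, the set $\{z_1^{i_1}\cdots z_r^{i_r} : i_1+\dots+i_r = j\}$ of degree-$j$ monomials is the standard $k$-basis of the degree-$j$ graded piece of $k[z_1,\dots,z_r]$. Since a vector space isomorphism carries a basis to a basis, its preimage $\overline{M}_j$ is a $k$-basis of $\mathfrak{m}^j/\mathfrak{m}^{j+1}$, which is exactly the claim. I would phrase this as: the composite $M_j \to \mathfrak{m}^j/\mathfrak{m}^{j+1} \xrightarrow{\sim} (k[z_1,\dots,z_r])_j$ sends the monomial vector $(i_1,\dots,i_r)$ bijectively onto the standard monomial basis, so $\overline{M}_j$ is a basis.

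There is essentially no serious obstacle here; the only point requiring a sentence of care is checking that the class of $x_1^{i_1}\cdots x_r^{i_r}$ in $\mathfrak{m}^j/\mathfrak{m}^{j+1}$ really is the image of $z_1^{i_1}\cdots z_r^{i_r}$ under the chosen isomorphism, rather than merely mapping to a scalar multiple or something more complicated — and this is immediate from the fact that \cite[\href{https://stacks.math.columbia.edu/tag/00NO}{Lemma 00NO}]{stacks-project} produces the isomorphism precisely as the $k$-algebra homomorphism $z_i \mapsto \overline{x_i}$, together with multiplicativity. One should also note in passing that the $x_i$ being a set of generators of $\mathfrak{m}$ whose images form a basis of $\mathfrak{m}/\mathfrak{m}^2$ is exactly the hypothesis under which that cited lemma applies (this is guaranteed by regularity and the setup of \cref{sec:PowerSeries}), so the citation is legitimate. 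The proof is therefore short: identify the cited isomorphism, observe it is graded, transport the standard monomial basis back, done.
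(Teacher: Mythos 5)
Your proposal is correct and follows exactly the route the paper itself takes: the paper justifies this lemma in the sentence preceding it by appealing to the graded isomorphism $\mathrm{gr}_{\mathfrak{m}}(A)\simeq k[z_{1},\dots,z_{r}]$ from the cited Stacks project lemma, and you have simply spelled out the same transport-of-basis argument in each degree. No gaps; nothing further is needed.
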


For the upcoming proposition, recall that a set of representatives $S$ for the residue field $k:=A/\mathfrak{m}$ is a set of elements $S$ in $A$ such that the restriction of the quotient map $A\rightarrow{A/\mathfrak{m}}$ to $S$ yields a bijection.  %
In other words, for every element of the residue field, we have chosen exactly one representative in $A$. We will assume that $0\in{S}$, so that it represents $\overline{0}$. We will view $S$ as a set-theoretic section of the quotient map $A\to A/\mathfrak{m}$. 

We now recall that we %
can write down $\mathfrak{m}$-adic power series in a regular local Noetherian ring using the $\mathfrak{m}$-adic topology on $A$. 
Namely, for a fixed set of generating monomials $\{x_{1},...,x_{r}\}$ for $A$, %
we write %
\begin{equation*}%
z=\sum_{{i}\in\mathbb{N}^{r}}c_{i}\cdot{}x_{1}^{i_{1}}\cdots{x_{r}^{i_{r}}}
\end{equation*}
for $z\in{A}$, ${i}=(i_{1},...,i_{r})\in\mathbb{N}^{r}$ and $c_{{i}}\in{S}$ if the sequence %
$(z_{n})$ defined by taking all terms of total degree less than $n$ converges to $z$. For every $z\in{A}$, we can find a unique power series expansion of this form by the following proposition. 
\begin{proposition}\label{prop:RegularExpansions}
Let $(A,\mathfrak{m})$ be a regular Noetherian local ring. %
 Let $x_{1},...,x_{r}\in{\mathfrak{m}}$ be elements that map to a basis of the $k$-vector space $\mathfrak{m}/\mathfrak{m}^{2}$ and let $S\subset{A}$ be a set of representatives of the residue field $A/\mathfrak{m}$. Then every element $z\in{A}$ can be uniquely written as
\begin{equation*}
z=\sum_{{i}\in\mathbb{N}^{r}}c_{{i}}\cdot{}x_{1}^{i_{1}}\cdots{x_{r}^{i_{r}}},
\end{equation*}
where ${i}=(i_{1},...,i_{r})$ and $c_{{i}}\in{S}$. %
\end{proposition}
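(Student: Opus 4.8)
The plan is to construct the coefficients $c_{i}$ degree by degree using the $\mathfrak{m}$-adic filtration, and then verify convergence and uniqueness. First I would set up the inductive construction: given $z \in A$, I want to produce, for each total degree $d \geq 0$, a finite collection $\{c_{i} : |i| = d\} \subset S$ such that the partial sum $z_{d} := \sum_{|i| \leq d} c_{i} x^{i}$ satisfies $z - z_{d} \in \mathfrak{m}^{d+1}$. For $d = 0$: the image of $z$ in $k = A/\mathfrak{m}$ has a unique representative $c_{0} \in S$, and $z - c_{0} \in \mathfrak{m}$. For the inductive step, suppose $z_{d}$ has been constructed with $z - z_{d} \in \mathfrak{m}^{d+1}$. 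By \cref{BasisVectorSpaces}, the image of $\overline{M}_{d+1}$ (the degree-$(d+1)$ monomials in the $x_{i}$) is a $k$-basis of $\mathfrak{m}^{d+1}/\mathfrak{m}^{d+2}$, so the class of $z - z_{d}$ in $\mathfrak{m}^{d+1}/\mathfrak{m}^{d+2}$ can be written uniquely as $\sum_{|i| = d+1} \overline{c_{i}} \cdot \overline{x^{i}}$ with $\overline{c_{i}} \in k$; lifting each $\overline{c_{i}}$ through the bijection $S \to k$ gives unique $c_{i} \in S$, and setting $z_{d+1} := z_{d} + \sum_{|i|=d+1} c_{i} x^{i}$ we get $z - z_{d+1} \in \mathfrak{m}^{d+2}$. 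This determines all the $c_{i}$.

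Next I would address convergence. By construction $z - z_{d} \in \mathfrak{m}^{d+1}$ for every $d$, so the sequence $(z_{d})$ converges to $z$ in the $\mathfrak{m}$-adic topology; since $A$ is Noetherian, Krull's intersection theorem gives $\bigcap_{d} \mathfrak{m}^{d} = (0)$, so the limit is unique and equals $z$. This is exactly the sense of the infinite sum defined before the proposition, so $z = \sum_{i \in \mathbb{N}^{r}} c_{i} x^{i}$ as claimed.

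Finally, uniqueness of the expansion: suppose $z = \sum_{i} c_{i} x^{i} = \sum_{i} c'_{i} x^{i}$ with $c_{i}, c'_{i} \in S$. Let $d$ be minimal with $c_{i} \neq c'_{i}$ for some $|i| = d$ (if no such $d$ exists we are done). Then the two partial sums through degree $d-1$ agree, and subtracting, $\sum_{|i|=d}(c_{i} - c'_{i}) x^{i} \in \mathfrak{m}^{d+1}$ (the tails of both series lie in $\mathfrak{m}^{d+1}$). Reducing modulo $\mathfrak{m}^{d+1}$ and using that $\overline{M}_{d}$ is a $k$-basis of $\mathfrak{m}^{d}/\mathfrak{m}^{d+1}$ (\cref{BasisVectorSpaces}), we conclude $\overline{c_{i} - c'_{i}} = 0$ in $k$ for all $|i| = d$; since $S$ maps bijectively to $k$ and $c_{i}, c'_{i} \in S$, this forces $c_{i} = c'_{i}$, a contradiction. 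Hence the expansion is unique.

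The main obstacle I anticipate is mostly bookkeeping rather than conceptual: one must be careful that the "tail" $\sum_{|i| > d} c_{i} x^{i}$ genuinely lies in $\mathfrak{m}^{d+1}$ (each monomial $x^{i}$ with $|i| = e$ lies in $\mathfrak{m}^{e}$, and the infinite sum converges inside the closed ideal $\mathfrak{m}^{d+1}$), and that the whole argument relies on $A$ being $\mathfrak{m}$-adically separated, which is where Noetherianness enters via Krull. The graded-ring identification $\mathrm{gr}_{\mathfrak{m}}(A) \simeq k[z_{1},\dots,z_{r}]$ (equivalently \cref{BasisVectorSpaces}) does all the real work in both existence and uniqueness.
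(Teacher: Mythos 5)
Your proof is correct and follows essentially the same route as the paper: the paper's (sketched) argument is precisely the degree-by-degree set-theoretic splitting of the sequences $0\to\mathfrak{m}^{i+1}\to\mathfrak{m}^{i}\to\mathfrak{m}^{i}/\mathfrak{m}^{i+1}\to 0$ using Lemma~\ref{BasisVectorSpaces} and the section given by $S$, which is exactly your inductive construction. You simply supply the convergence and uniqueness details that the paper leaves to the reader (and your invocation of Krull's intersection theorem, while harmless, is not strictly needed since both steps work at a fixed finite level of the filtration).
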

\begin{proof}
The proof follows from Lemma \ref{BasisVectorSpaces} by set-theoretically splitting the exact sequences 
\begin{equation*}
0\rightarrow{\mathfrak{m}^{i+1}}\to\mathfrak{m}^{i}\to\mathfrak{m}^{i}/\mathfrak{m}^{i+1}\rightarrow{0}
\end{equation*}
using the given basis $\{x_{1},...,x_{r}\}$ and the section $A/\mathfrak{m}\to{A}$ provided by $S$. 
We leave the details to the reader. 
 
 \end{proof}

\begin{remark}
The proposition gives an injective map $A\to \text{Hom}(\mathbb{N}^{r},S)$. This map is generally not surjective, but it is if we take the $\mathfrak{m}$-adic completion of $A$. %
\end{remark} 

\begin{definition}\label{def:Truncations}
Let $$z=\sum_{{i}\in\mathbb{N}^{r}}c_{{i}}\cdot{}x_{1}^{i_{1}}\cdots{x_{r}^{i_{r}}}$$
be the unique power series representation of an element $z\in A$ obtained from \cref{prop:RegularExpansions}. %
 Let $n\in\mathbb{N}$. The $n$-th order truncation of $z$ is the polynomial %
\begin{equation*}
\mathrm{trun}_{n}(z)=\sum_{{i}\in\mathbb{N}^{r}}d_{{i}}\cdot{}x_{1}^{i_{1}}\cdots{x_{r}^{i_{r}}},
\end{equation*} 
where $d_{i}=c_{i}$ if $\mathrm{deg}(i)\leq{n}$, and $d_{i}=0$ if $\mathrm{deg}(i)>{n}$. %
We have $z-\text{trun}_{n}(z)\in\mathfrak{m}^{n+1}$. Let $f(z)\in A[z]$ be a separable polynomial that is completely split with roots $\alpha_{i}\in A$. We say that $n$ is a separating height for $f(z)$ if the $(n-1)$-truncations of the $\alpha_{i}$ are distinct.  
\end{definition}
\begin{remark}
Note that if $n$ is a separating height for $f(z)$, then the $n$-truncations $\gamma_{i}=\text{trun}_{n}(\alpha_{i})$ give a separating set of approximations for $f(z)$ of height $(n-1)$ in the sense of \cref{def:SeparatingApproximations}.  
\end{remark}

\subsection{A symbolic algorithm to calculate power series expansions}\label{sec:NPAlgorithm}

In this section we give a symbolic algorithm to calculate power series expansions over the strict Henselization of $A=K[x_{1},...,x_{n}]$ with respect to $\mathfrak{m}=(x_{1},...,x_{n})$. By a small modification, the algorithm will also work over arithmetic rings such as $A=\mathcal{O}_{K,\mathfrak{p}}[x_{1},...,x_{n}]$, where $\mathcal{O}_{K,\mathfrak{p}}$ is a localization of the ring of integers of a number field. We refer the reader to \cref{rem:NumberFieldAlgorithm} and \cref{rem:ModifiedAlgorithm} for more on these modifications.
\begin{remark}
The algorithm in this section will be presented in several steps. We first show that the coefficients of the $\mathfrak{p}$-adic power series expansions of the roots of $f(z)$ are defined over the strict Henselization of $B=K[x_{1},...,x_{n-1}]$. By iterating this procedure, we then find that the $\mathfrak{m}$-adic power series expansions of the roots $f(z)$ can be recovered by calculating succesive $\mathfrak{p}$-adic power series expansions. %
In the second part of this section, we show how to explicitly calculate these coefficients. We start by explaining the \'{e}tale and tropical subroutines, which form the heart of the main discrete Newton-Puiseux algorithm which can be found in \cref{sec:DNPAlgorithm}. The formal algorithms behind the subroutines can be found in \cref{sec:Algorithms2}. The multivariate Newton-Puiseux algorithm can be found in \cref{thm:MainThmNumber2}. An implementation of these can be found on the \href{https://paulhelminck.wordpress.com/code}{author's website}. We will often refer to this code here. %
We conclude this section with \cref{exa:MainExample1}.   
\end{remark}    

\subsubsection{Strict Henselizations}\label{sec:StrictHenselizations1}

We start with a general remark on strict Henselizations. Let $A$ be a Noetherian normal domain with corresponding scheme $X=\Spec(A)$ and prime ideals $\mathfrak{m}\supset \mathfrak{p}$. We write $A^{\sep}$ for the normalization of $A$ in $L^{\sep}$.  Then we obtain a morphism of localizations $A_{\mathfrak{m}}\to A_{\mathfrak{p}}$. We denote the strict Henselizations of $A$ with respect to $\mathfrak{m}$ and $\mathfrak{p}$ by $A^{\sh}_{\mathfrak{m}}$ and $A^{\sh}_{\mathfrak{p}}$ respectively.  We can then extend the morphism of localizations $A_{\mathfrak{m}}\to A_{\mathfrak{p}}$ to a morphism of strict Henselizations $A^{\sh}_{\mathfrak{m}}\to A^{\sh}_{\mathfrak{p}}$. We note here that this extension is not necessarily unique. We give two ways to see this. Namely, if we use the definition of the strict Henselization as a colimit of pointed \'{e}tale algebras as in \cite[\href{https://stacks.math.columbia.edu/tag/04GP}{Lemma 04GP}]{stacks-project}, then we can choose a set of compatible extensions of $\mathfrak{p}\subset \mathfrak{m}$ (note that the extension of $\mathfrak{m}$ is already fixed by definition, but this does not fix the extension of $\mathfrak{p}$). Since there are often many different extensions of $\mathfrak{p}\subset \mathfrak{m}$, this gives rise to different morphisms.    

We can also view this choice as follows. 
 We pick an extension $\mathfrak{m}^{\sep}\supset \mathfrak{p}^{\sep}$ of $\mathfrak{m}\supset \mathfrak{p}$ to $A^{\sep}$ as in \cref{def:AbsoluteSections} and \cref{lem:SectionLemma}. We have $I_{\mathfrak{p}^{\sep}/\mathfrak{p}}\subset I_{\mathfrak{m}^{\sep}/\mathfrak{m}}$ by $z-\sigma(z)\in \mathfrak{p}^{\sep}\subset \mathfrak{m}^{\sep}$. Using infinite Galois theory, we then obtain %
an inclusion 
\begin{equation*}
A^{\sh}_{\mathfrak{m}}\simeq (A_{\mathfrak{m}^{\sep}}^{\sep})^{I_{\mathfrak{m}^{\sep}/\mathfrak{m}}}\subset (A^{\sep}_{\mathfrak{p}^{\sep}})^{I_{\mathfrak{p}^{\sep}/\mathfrak{p}}}\simeq A^{\sh}_{\mathfrak{p}}.
\end{equation*}
Here the isomorphisms follow from \cite[\href{https://stacks.math.columbia.edu/tag/0BSW}{Lemma 0BSW}]{stacks-project}. As before, there are possibly many extensions for $\mathfrak{p}\subset \mathfrak{m}$ to $A^{\sep}$, so this gives many different homomorphisms. Explicit examples of this can be found in \ref{exa:MainExample1} and \ref{exa:PlaneQuartic}.      

Similarly, if we have a chain of prime ideals $\mathfrak{m}_{1}\supset \mathfrak{m}_{2}\supset ...\supset\mathfrak{m}_{n}$, then we obtain a chain of inclusions 
\begin{equation*}
A^{\sh}_{\mathfrak{m}_{1}}\subset A^{\sh}_{\mathfrak{m}_{2}}\subset ...\subset A^{\sh}_{\mathfrak{m}_{n}}
\end{equation*}%
in this way. We will use this set-up throughout this section. The specific choice of an extension will not matter to our computations.

\subsubsection{Maps of strict Henselizations}\label{sec:MapsSH}

Let $A={K}[x_{1},..,x_{n}]$ with $\mathfrak{m}=(x_{1},...,x_{n})$ and $\mathfrak{p}=(x_{n})$. The corresponding residue fields are $k(\mathfrak{m})=K$ and $k(\mathfrak{p})=K(x_{1},...,x_{n-1})$ respectively. We fix a set of %
separable closures $K^{\sep}$ and $K(x_{1},...,x_{n-1})^{\sep}$, together with isomorphisms
\begin{align*}
A^{\sh}_{\mathfrak{m}}/\mathfrak{m}^{\sh}&\simeq K^{\sep},\\
A^{\sh}_{\mathfrak{p}}/\mathfrak{p}^{\sh}&\simeq K(x_{1},...,x_{n-1})^{\sep}
\end{align*} 
and an embedding 
\begin{equation*}
K^{\sep}\to K(x_{1},...,x_{n-1})^{\sep}
\end{equation*}
 such that %
the diagram 
\begin{equation*}
 \begin{tikzcd}
A^{\sh}_{\mathfrak{m}} \arrow[r] \arrow[d]& K^{\sep} \arrow[d]\\
A^{\sh}_{\mathfrak{p}} \arrow[r] & K(x_{1},...,x_{n-1})^{\sep}
\end{tikzcd}
 \end{equation*}
 commutes. %
Let $B={K}[x_{1},...,x_{n-1}]$, $\overline{\mathfrak{m}}=(x_{1},...,x_{n-1})\subset{B}$ and $\overline{\mathfrak{p}}=(0)\subset{B}$. We identify the separable closure of $k(\overline{\mathfrak{m}})$ with the separable closure of $k(\mathfrak{m})$, and similarly for $k(\mathfrak{p})$ and $k(\overline{\mathfrak{p}})$.  %
We have natural ring homomorphisms 
\begin{align*}
r:B&\to A,\\
s:A&\to B,
\end{align*}
with $s\circ r=\text{id}$. From this, we obtain a %
commutative diagram 
  \begin{equation*}
 \begin{tikzcd}
A_{\mathfrak{m}}\arrow[r] \arrow[d] & B_{\overline{\mathfrak{m}}} \arrow[r] \arrow[d] & A_{\mathfrak{m}} \arrow[d] \\
A_{\mathfrak{p}} \arrow[r] & B_{\overline{\mathfrak{p}}} \arrow[r] & A_{\mathfrak{p}}.
\end{tikzcd}
 \end{equation*}
 Here the horizontal arrows are local, and the vertical arrows are the localization morphisms. We can now turn this diagram into a diagram of strict Henselizations as follows. For the horizontal maps, we use the functoriality %
 of strict Henselizations as in \cite[\href{https://stacks.math.columbia.edu/tag/04GU}{Lemma 04GU}]{stacks-project}. The induced map of strict Henselizations on the vertical side comes from our section %
 $\mathfrak{m}^{\sep}\supset \mathfrak{p}^{\sep}$ of $\mathfrak{m}\supset{\mathfrak{p}}$. %
 From this, it is then easy to see that we obtain a commutative diagram 
   \begin{equation}\label{eq:CommutativeDiagram}
 \begin{tikzcd}
A^{\sh}_{\mathfrak{m}} \arrow[d] \arrow[r] & B^{\sh}_{\overline{\mathfrak{m}}} \arrow[d]\arrow[r] & A^{\sh}_{\mathfrak{m}} \arrow[d] \\
A^{\sh}_{\mathfrak{p}} \arrow[r]           & {K(x_{1},...,x_{n-1})^{\sep}} \arrow[r] &   A^{\sh}_{\mathfrak{p}}.
\end{tikzcd}
 \end{equation}
 Note that the middle vertical map is essentially unique, since there is only one extension of $\overline{\mathfrak{m}}\supset (0)$ if $\overline{\mathfrak{m}}^{\sep}$ is fixed.  
The vertical arrows are all injective. 
Note that $ {K(x_{1},...,x_{n-1})^{\sep}}$ is the residue field of the discrete valuation ring $A^{\sh}_{\mathfrak{p}}$, and the map on the bottom right gives a natural ring-theoretic section of the quotient map $A^{\sh}_{\mathfrak{p}}\to A^{\sh}_{\mathfrak{p}}/\mathfrak{p}^{\sh}\simeq {K(x_{1},...,x_{n-1})^{\sep}}$.  %
We use the image of this map as our set of representatives and find using \cref{prop:RegularExpansions} that we can uniquely write any element $\alpha\in {A^{\sh}_{\mathfrak{p}}}$ as 
\begin{equation*}%
\alpha= \sum d_{j}x_{n}^{j},
\end{equation*}
where $d_{j}\in {K(x_{1},...,x_{n-1})^{\sep}}$. Note that $d_{0}$ is exactly the image of $\alpha$ under the ring homomorphisms 
\begin{equation*}
A^{\sh}_{\mathfrak{p}}\to {K(x_{1},...,x_{n-1})^{\sep}} \to  A^{\sh}_{\mathfrak{p}}
\end{equation*}
from %
\cref{eq:CommutativeDiagram}.  
If $\alpha\in A^{\sh}_{\mathfrak{m}}\subset A^{\sh}_{\mathfrak{p}}$, then we moreover have the following: %
\begin{lemma}\label{lem:IntegralityPowerSeries}
Let $\alpha\in A^{\sh}_{\mathfrak{m}}$ and consider its unique $\mathfrak{p}$-adic power series expansion
\begin{equation*}
\alpha= \sum d_{i}x_{n}^{i}
\end{equation*}
arising from the section ${K(x_{1},...,x_{n-1})^{\sep}} \to  A^{\sh}_{\mathfrak{p}}$, see 
 \cref{prop:RegularExpansions}. 
Then $d_{i}\in B^{\sh}_{\overline{\mathfrak{m}}}$ for every $i\geq{0}$. 
\end{lemma}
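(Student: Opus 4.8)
The plan is to read off the coefficients $d_{i}$ one at a time, keeping as an inductive invariant that the ``tail''
\[
\alpha_{i}:=x_{n}^{-i}\Bigl(\alpha-\textstyle\sum_{j=0}^{i-1}s(d_{j})x_{n}^{j}\Bigr)
\]
--- which a priori only makes sense in the discrete valuation ring $A^{\sh}_{\mathfrak{p}}$ --- in fact lies in the subring $A^{\sh}_{\mathfrak{m}}$, where $s$ is the section $K(x_{1},\dots,x_{n-1})^{\sep}\to A^{\sh}_{\mathfrak{p}}$ from the bottom row of \cref{eq:CommutativeDiagram} and we identify $A^{\sh}_{\mathfrak{m}}$ with its image under the canonical embedding $A^{\sh}_{\mathfrak{m}}\to A^{\sh}_{\mathfrak{p}}$ (which is both the leftmost and the rightmost vertical arrow of \cref{eq:CommutativeDiagram}, these coinciding by construction). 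Since $\alpha_{0}=\alpha$, the case $i=0$ is the hypothesis of the lemma; and once the invariant holds for $\alpha_{i}$, \cref{prop:RegularExpansions} identifies $d_{i}$ with the residue of $\alpha_{i}$ modulo $\mathfrak{p}$ and gives $\alpha_{i+1}=x_{n}^{-1}(\alpha_{i}-s(d_{i}))$, whose $\mathfrak{p}$-adic expansion is $\sum_{j\ge 0}d_{i+1+j}x_{n}^{j}$. So the induction step reduces to two assertions: (a) $d_{i}\in B^{\sh}_{\overline{\mathfrak{m}}}$, and hence $s(d_{i})\in A^{\sh}_{\mathfrak{m}}$; and (b) $x_{n}^{-1}(\alpha_{i}-s(d_{i}))\in A^{\sh}_{\mathfrak{m}}$.

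For (a) I would argue as follows. The residue map $A^{\sh}_{\mathfrak{p}}\twoheadrightarrow A^{\sh}_{\mathfrak{p}}/\mathfrak{p}^{\sh}\simeq K(x_{1},\dots,x_{n-1})^{\sep}$, restricted along the embedding $A^{\sh}_{\mathfrak{m}}\hookrightarrow A^{\sh}_{\mathfrak{p}}$, factors --- by commutativity of the left-hand square of \cref{eq:CommutativeDiagram} --- through the injective arrow $d\colon B^{\sh}_{\overline{\mathfrak{m}}}\hookrightarrow K(x_{1},\dots,x_{n-1})^{\sep}$. Since $\alpha_{i}\in A^{\sh}_{\mathfrak{m}}$, its residue $d_{i}$ therefore lies in $B^{\sh}_{\overline{\mathfrak{m}}}$, with distinguished preimage $\beta_{i}\in B^{\sh}_{\overline{\mathfrak{m}}}$ the image of $\alpha_{i}$ along the top-left arrow of \cref{eq:CommutativeDiagram}. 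Commutativity of the right-hand square then yields $s(d_{i})=s(d(\beta_{i}))$ equal to the image in $A^{\sh}_{\mathfrak{p}}$ of $b(\beta_{i})\in A^{\sh}_{\mathfrak{m}}$, where $b\colon B^{\sh}_{\overline{\mathfrak{m}}}\to A^{\sh}_{\mathfrak{m}}$ is the top-right arrow; hence $s(d_{i})\in A^{\sh}_{\mathfrak{m}}$, so $\alpha_{i}-s(d_{i})\in A^{\sh}_{\mathfrak{m}}$.

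For (b): $A^{\sh}_{\mathfrak{m}}$ is a regular, hence integral, local ring and $A^{\sh}_{\mathfrak{p}}$ is a discrete valuation ring with uniformizer $x_{n}$, so $x_{n}$ is a nonzerodivisor in both, and $\alpha_{i}-s(d_{i})$ is divisible by $x_{n}$ in $A^{\sh}_{\mathfrak{p}}$ by construction; thus it suffices to prove $x_{n}A^{\sh}_{\mathfrak{p}}\cap A^{\sh}_{\mathfrak{m}}=x_{n}A^{\sh}_{\mathfrak{m}}$, equivalently that $A^{\sh}_{\mathfrak{m}}/x_{n}A^{\sh}_{\mathfrak{m}}\to A^{\sh}_{\mathfrak{p}}/x_{n}A^{\sh}_{\mathfrak{p}}$ is injective. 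This is the step I expect to be the main obstacle, as it is the only input not already packaged in \cref{eq:CommutativeDiagram}. To settle it I would note that the top-left arrow $A^{\sh}_{\mathfrak{m}}\to B^{\sh}_{\overline{\mathfrak{m}}}$ and the bottom-left arrow $A^{\sh}_{\mathfrak{p}}\to K(x_{1},\dots,x_{n-1})^{\sep}$ of \cref{eq:CommutativeDiagram} are both surjective with kernel $(x_{n})$ --- the latter because $x_{n}$ is a uniformizer of $A^{\sh}_{\mathfrak{p}}$, the former because strict Henselization is flat and commutes with the quotient $A_{\mathfrak{m}}\twoheadrightarrow A_{\mathfrak{m}}/x_{n}A_{\mathfrak{m}}=B_{\overline{\mathfrak{m}}}$ --- so that the commuting left-hand square exhibits the reduction $A^{\sh}_{\mathfrak{m}}/x_{n}\to A^{\sh}_{\mathfrak{p}}/x_{n}$ precisely as the injective arrow $d$. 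Granting this, $\alpha_{i}-s(d_{i})=x_{n}\alpha_{i+1}$ with $\alpha_{i+1}\in A^{\sh}_{\mathfrak{m}}$, the induction closes, and every $d_{i}$ lies in $B^{\sh}_{\overline{\mathfrak{m}}}$; all remaining verifications are routine bookkeeping with the two commuting squares. I would either deduce this compatibility inline from the construction of \cref{eq:CommutativeDiagram} or record it as a short auxiliary lemma on base change of strict Henselizations along $A_{\mathfrak{m}}\to A_{\mathfrak{m}}/x_{n}$.
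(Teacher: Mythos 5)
Your proposal is correct and follows essentially the same route as the paper: read off $d_{0}$ from the left square of \cref{eq:CommutativeDiagram} and recurse on the truncated tails. The only difference is that you make explicit the inductive invariant that the tails $\alpha_{i}$ stay in $A^{\sh}_{\mathfrak{m}}$ — which the paper subsumes under ``recursively applying the same reasoning'' — and your justification via $A^{\sh}_{\mathfrak{m}}/x_{n}A^{\sh}_{\mathfrak{m}}\simeq B^{\sh}_{\overline{\mathfrak{m}}}$ together with the injectivity of $B^{\sh}_{\overline{\mathfrak{m}}}\to K(x_{1},\dots,x_{n-1})^{\sep}$ is a sound way to close that step.
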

\begin{proof}
Using the commutative diagram in \cref{eq:CommutativeDiagram}, we immediately find that $d_{0}\in B^{\sh}_{\overline{\mathfrak{m}}}$. By recursively applying the same reasoning to the higher approximations %
\begin{equation*}
\alpha'_{j}=(\alpha-\sum_{i=0}^{j} d_{i}x_{2}^{i})/x_{2}^{r}
\end{equation*}  
for $r=v_{\mathfrak{p}}(\alpha-\sum_{i=0}^{j} d_{i}x_{2}^{i})$, we then find that all the $d_{i}$ are defined over $B^{\sh}_{\overline{\mathfrak{m}}}$. 
\end{proof}

Consider the integral coefficient $d_{0}$ obtained from \cref{lem:IntegralityPowerSeries}. We can now restart our argument with $A=K[x_{1},...,x_{n-1}]$, $B=K[x_{1},...,x_{n-2}]$, $\mathfrak{m}=(x_{1},...,x_{n-1})$, $\mathfrak{p}=(x_{n-1})$ and $\alpha=d_{0}\in{A^{\sh}_{\mathfrak{m}}}$. As before, we find that the coefficients in its $\mathfrak{p}$-adic power series expansions are defined over $B^{\sh}_{\overline{\mathfrak{m}}}$. By continuing this process, we eventually find a set of coefficients that are defined over the strict Henselization of $K$, which is $K^{\sep}$. By retracing our steps and combining our findings, this gives the $(x_{1},...,x_{n})$-adic power series expansions for an element $\alpha\in K[x_{1},...,x_{n}]^{\sh}_{\mathfrak{m}}$. That is, we obtain the unique coefficients $c_{i}\in K^{\sep}$ such that 
\begin{equation}\label{eq:FinalCoefficients}
\alpha = \sum_{i\in\mathbb{N}^{n}}c_{i}x^{i},
\end{equation}
where $x^{i}=x_{1}^{i_{1}}\cdots x_{n}^{i_{n}}$. In the remaining sections, we give algorithms to compute these coefficients. 
\subsubsection{Notation for our algorithms}\label{sec:NotationAlgorithms}

To make the transition to our implementation as smooth as possible, we introduce some notation for our algorithms. The main objects in our code are globally saved in a large array denoted by \texttt{C}. An element of this array consists of a vector \texttt{C[i]} with entries \texttt{C[i][j]} for $j=1,...8$. We will call these \emph{state vectors}. We will sometimes denote these by \texttt{w} as well, with entries \texttt{w[j]}. We discuss some of the types that occur in a state vector: %
\begin{itemize}
\item A polynomial $f(z)\in{R=K[z,w_{1},...,w_{m},x_{1},...,x_{n}]}$. This is called the \emph{current polynomial}. It is stored in \texttt{C[i][1]}. 
\item The ring $R=K[z,w_{1},...,w_{m},x_{1},...,x_{n}]$. This is the \emph{ambient ring}. It is stored in \texttt{C[i][2]}. 
\item The variable $z$ is the \emph{covering variable}. It always comes first in our algebras $R$. %
\item The variables $w_{i}$ are the \emph{coefficient variables}.
\item The variables $x_{i}$ are the \emph{parameter variables}. 
\item A finite set $S\subset R$ that encodes the relations of \'{e}tale algebras. This is stored in \texttt{C[i][3]}. 
\item The integer $m$ as above is denoted by \texttt{c1} in the code. It is the number of coefficient variables. It is stored in \texttt{C[i][4]}.  
\item The integer $n$ as above is denoted by \texttt{c2} in the code. It is the number of parameter variables. It is stored in \texttt{C[i][5]}.   %
\item A local approximation height $r_{0}$. It is stored in \texttt{C[i][6]}.  
\item An array $V$ of approximations. Each array $V$ will consist of arrays $\{P,r\}$, where $P=[a:b]$ is a projective zero (so that the affine zero is $a/b$), and $r$ is the height of the coefficient. Every entry $\{P,r\}$ can be viewed as a \emph{term} $(a/b)x_{n}^{r}$. We note here that $b$ is never zero for us. These approximations are stored in \texttt{C[i][7]}. 
\item A tag that is either \texttt{etale} or \texttt{tropical}. It is stored in \texttt{C[i][8]}. 
\end{itemize}

The main routines of our algorithm are the \emph{\'{e}tale} and \emph{tropical} routines. The terminology will be explained later on. During each such routine, a given polynomial \texttt{C[i][1]} is taken, and a set of new polynomials is produced (together with other data). The vector \texttt{C[i]} is then replaced by a new set of global vectors using      
\texttt{splice!()}. In terms of memory, this is not entirely efficient, but it will suffice for our purposes.    

\subsubsection{How to calculate power series expansions: the \'{e}tale routine}\label{sec:EtaleRoutine}

We now explain how one can calculate the coefficients $d_{i,j}$ from \cref{lem:IntegralityPowerSeries} in practice. %
The main idea is to generalize %
the classical discrete Newton-Puiseux algorithm, which can be distilled from the proof of \cite[Theorem 2.1.5]{MS15}. %
To explain this, we will heavily use the fact that $A^{\sh}$ is a unique factorization domain, see \cite[\href{https://stacks.math.columbia.edu/tag/06LN}{Lemma 06LN}]{stacks-project}. 

We assume without loss of generality that $f(z)\in A[z]=K[x_{1},...,x_{n}][z]$. Suppose that $f(z)$ splits completely over $A^{\sh}$, so that 
\begin{equation*}
f(z)=u \prod (z-\alpha_{i})
\end{equation*}            
for $\alpha_{i}\in A^{\sh}$ and $u$ a unit in $A^{\sh}$. %
To calculate the first $\mathfrak{p}$-adic coefficient of a root for $\mathfrak{p}=(x_{n})$, we calculate the reduction of $f(z)$ modulo $(x_{n})$. This gives a polynomial $\overline{f(z)}\in B[z]$ and we calculate an irreducible factorization %
\begin{equation*}
\overline{f}(z)=\overline{u} \prod g_{i}^{r_{i}}
\end{equation*}   
of $\overline{f(z)}$
over the fraction field $K(B)$. We will only be interested in calculating the $g_{i}$ for now, and not the $r_{i}$.  
\begin{remark}
Let $I$ be a prime ideal in $K[x_{1},...,x_{m}]$ with quotient $C$. Suppose we are given a (non-constant) polynomial $f(z)\in C[z]$. 
To calculate an %
irreducible factorization of $f(z)$ over $C$, let %
$R=K[z,x_{1},...,x_{m}]$. We write $I_{R}$ for the ideal generated by $I$ in $R$. We now calculate a primary decomposition of the ideal $I':=I_{R}+f(z)\cdot{R}\subset R$, and then return a Gr\"{o}bner basis of each of these factors with respect to a suitable elimination order. We then take %
the element of the Gr\"{o}bner basis with the smallest nonzero $z$-degree. The corresponding Julia file in our implementation that calculates this irreducible factorization is called \texttt{FactorDomain.jl}. For instance, if we take $C=\mathbb{Q}[x,y]/(y^2-x^3)$ and $f=z^2-x$, then we obtain the output 
\begin{lstlisting}[basicstyle=\ttfamily\scriptsize, escapeinside={(*@}{@*)}]
julia> FactorDomain(S,I,f2)
2-element Vector{Any}:
 -z*y + x^2
 z*y + x^2
\end{lstlisting}
This also shows that it is not necessary in this algorithm to assume that the domain $C$ has any special properties such as being smooth over $K$ or normal. 
\end{remark}
We now choose an irreducible factor $g_{i}$. If $\mathrm{deg}(g_{i})=1$, then we do not construct a new algebra.  %
Otherwise, we construct the \'{e}tale algebra $B[z]/(g_{i})$. %
This algebra non-canonically embeds into the strict Henselization $B^{\sh}_{\overline{\mathfrak{m}}}$, as we have different choices for the roots of $g_{i}$ in $B^{\sh}_{\overline{\mathfrak{m}}}\subset A^{\sh}_{\mathfrak{m}}$. For the algorithm, we can however simply work with $A[d_{0}]=A[z]/(g_{i})$. We view the above as calculating the zeroth-order power series expansions of the roots of $f(z)$. 
\begin{remark}
The corresponding algebras are created in \texttt{Code1.jl} using the function \texttt{AlgebraFromPolynomial(R,c1,c2,S,f)}. Here $R$ is the ambient polynomial ring, $c_{1}$ is the number of coefficient variables, $c_{2}$ is the number of parameter variables, $S$ is the set of current relations (which will be updated after this step) and $f$ is the given polynomial arising from the irreducible factorization.  The new variables used for different polynomials will often overlap. This will not cause any errors however, as we keep track of the corresponding \'{e}tale relations in \texttt{C[i][3]}.  
\end{remark} 
 If $\mathrm{deg}(g_{i})=1$, then we write $g_{i}=b_{i}z+a_{i}$ and let  %
 $\gamma_{i,P}=[a_{i}:b_{i}]$ be the projective root of $g_{i}$ in our \'{e}tale algebra. If $\mathrm{deg}(g_{i})>1$, then we take $a_{i}=d_{0}$ and $b_{i}=1$, so that $\gamma_{i,P}=[d_{0}:1]$.  
We now consider $f(z)$ as a polynomial in $A[d_{0}][z]$ and calculate  %
\begin{equation*}
f_{i,1}(z):=b_{i}^{n}f(z+a_{i}/b_{i}),
\end{equation*} 
where $n$ is the degree of $f$ as a polynomial in $z$. By construction, we have that there is at least one root of $f_{i,1}$ that is divisible by $x_{n}$, so that its valuation with respect to $\mathfrak{p}=(x_{n})$ (see \cref{def:TropicalRoot}) is positive. Note that we can define $f_{i,1}(z)$ without passing to the fraction field.  
\begin{remark}
This completes the \'{e}tale routine. It can be found in our code under \texttt{RetrieveNewPolynomialsEtale()}. We have included a short technical summary in \cref{alg:EtaleRoutine}.  %
\end{remark}

\subsubsection{How to calculate power series expansions: the tropical routine}\label{sec:TropicalRoutine}

We now continue with one of the polynomials $f_{i,1}(z)$ obtained in the \'{e}tale routine and turn to the second part of the algorithm. We know by construction that there is a root of $f_{i,1}$ that is divisible by $x_{n}$, so that the valuation of this root with respect to $x_{n}$ is positive. 
\begin{definition}\label{def:TropicalRoot}
Let $r$ be an integer. We say that $r$ is a tropical root of $f(z)$ is there exists a root $\alpha\in K(A^{\sh}_{\mathfrak{m}})$ of $f(z)$ such that $v_{x_{n}}(\alpha)=r$. Here $v_{x_{n}}:A^{\sh}_{\mathfrak{m}}\to \mathbb{Z}$ is the normalized valuation on $A^{sh}_{\mathfrak{m}}$ with $v_{x_{n}}(x_{n})=1$.  
\end{definition}
To calculate the tropical roots of a polynomial, we use the Newton polygon theorem, see \cite[Chapter II, Proposition 6.3]{Neukirch1999}. In terms of our set-up where $f(z)$ is a polynomial that splits completely over the strict Henselization $A^{\sh}_{\mathfrak{m}}$, this says that the tropical roots of a polynomial are exactly the slopes of its Newton polygon. %
Let $r>0$ be the slope of a non-trivial line segment in the Newton polygon of $f_{1}(z)$, %
corresponding to a root $\alpha$ of valuation $r$. In our code, we convert $f_{1}(z)$ to a tropical polynomial, and we use the built-in command for finding roots of tropical polynomials, see \texttt{TropicalRoots()}.   
Let 
\begin{equation*}%
f_{2}(z)=1/x_{n}^{k}f_{1}(x_{n}^{r}z),
\end{equation*}   
where $k$ is the content of $f_{1}(x_{n}^{r}z)$ with respect to $x_{n}$. Note here that $A^{\sh}_{\mathfrak{m}}$ is a unique factorization domain, so that the content is well defined. We remove this content using the function \texttt{RemoveContent()}. The polynomial $f_{2}(z)$ is calculated in \texttt{ValuationScaling()}. In $f_{2}(z)$, the corresponding transformed root $\alpha'=\alpha/x_{n}^{r}$ now has valuation $0$. %
At this point, we update our approximation height \texttt{C[i][6]} for each tropical root, and we replace the current polynomial $f_{1}(z)$ by the translated polynomial $f_{2}(z)$. This concludes the tropical routine. It can be found in \texttt{RetrieveNewPolynomialsTropical()}. We have included a technical summary of this algorithm in \cref{alg:TropicalRoutine}.

After this, we restart our algorithm and perform the \'{e}tale routine. That is, we calculate an irreducible factorization of the reduction of $f_{2}(z)$ modulo $x_{n}$. By iterating this process, we obtain the $\mathfrak{p}$-adic power series expansions of the roots of $f(z)$. 

\subsubsection{The discrete Newton-Puiseux algorithm}\label{sec:DNPAlgorithm}

We summarize our findings in an algorithm here. Throughout the algorithm, we will make use of the notation introduced in \cref{sec:NotationAlgorithms}. The corresponding code can be found in \texttt{IterationNP()}. In the algorithm, we write $[\cdot]$ for the empty array. 

\begin{algorithm}[h]
\caption{The generalized discrete Newton-Puiseux algorithm}\label{alg:DiscreteNPAlgorithm}
\begin{algorithmic}[1]
\Require A polynomial $f(z)\in A[z]$ for $A=K[x_{1},...,x_{n}]$. A height $r\in\mathbb{N}$. %
The polynomial $f(z)\in A[z]$ splits completely over the %
strict Henselization of $A$ with respect to $\mathfrak{m}=(x_{1},...,x_{n})$.
\Ensure The $x_{n}$-adic power series expansions of the roots of $f(z)$ up to height $r$.  
\State Initialize \texttt{C} with a single state vector \texttt{w}=$[f,A[z],[0],0,n,0,[\cdot],\texttt{etale}]$.
\State Run \texttt{InitialTropicalization()} to calculate the tropical roots of $f(z)$ with their corresponding state vectors, see \cref{alg:TropicalRoutine}. For every tropical root $k_{j}$, this defines a new state vector \texttt{w[j]} with approximation height \texttt{w[j][6]} $=k_{j}$. We also scale $f(z)$ according to the $k_{j}$. %
\State Replace \texttt{C[1]} with the set of \texttt{w[j]} obtained in Step 2.
\State $s_{i}=$\texttt{C[i][6]} for $i=1,...,c=$\texttt{length(C)}. 
\State $r_{0}=\mathrm{min}\{s_{i}\}_{i=1}^{c}$. %
\State $k=1$. 
\While{$r_{0}<r$}
\If{\texttt{C[k][6]}$\geq{r}$}%
\State $k=k+1$.
\Else
\If{\texttt{C[k][8]=etale}}
\State Run the \'{e}tale routine for \texttt{C[k]}, see \cref{alg:EtaleRoutine}. 
\Else
\State Run the tropical routine for \texttt{C[k]}, see \cref{alg:TropicalRoutine}. 
\EndIf
\State $r_{0}=\mathrm{min}\{\texttt{C[i][6]}\}$.  %
\EndIf
\EndWhile
\State \Return \texttt{C[i][7]} for $i=1,...,\texttt{length(C)}$. 
\end{algorithmic}
\end{algorithm}

\begin{theorem}\label{thm:DiscreteNPAlgorithm}
\cref{alg:DiscreteNPAlgorithm} correctly computes the $x_{n}$-adic power series expansions of $f(z)$ up to height $r$. 
\end{theorem}
\begin{proof}
The zeroth-order coefficients of the roots of $f(z)$ give the roots of the reduced polynomial $\overline{f}$ by the discussion in \cref{sec:MapsSH}. We thus see that the \'{e}tale routine correctly calculates these zeroth-order approximations. We now fix a root $\alpha_{i}$ with zeroth-order approximation $\gamma_{i}=a_{i}/b_{i}$. %
The transformation in the \'{e}tale routine sends a root $\alpha_{j}$ %
$\alpha_{j,1}:=\alpha_{j}-\gamma_{i}$. For $j=i$, this automatically has positive valuation $r_{i}$, so that $\alpha_{i,1}=\alpha_{i}-\gamma_{i}=x_{n}^{r_{i}}u$. The integer $r_{i}$ is calculated by the tropical routine. The next transformation sends a root $\alpha_{j,1}$ to $\alpha_{j,2}:=\alpha_{j,1}/x_{n}^{r_{i}}$. Note that if $z-\alpha_{j,2}$ is not integral, then the algorithm multiplies this term by a suitable factor of $x_{n}^{m}$, so that the $x_{n}^{m}z-\alpha_{j,2}x_{n}^{m}$ is integral. Its reduction modulo $x_{n}$ is then simply the reduction of the constant factor $\alpha_{j,2}x_{n}^{m}$.    %

The roots of the reduction of $f_{i,2}$ correspond to all of the $\alpha_{j,2}$ whose $x_{n}$-valuation is zero. We focus again on $j=i$, where the reduction of the root is the reduction of $u$. The next iteration of the \'{e}tale routine calculates the reduction of $u$, say $\delta_{i}$. We then directly find that $\alpha_{i}-(\gamma_{i}+\delta_{i}x_{n}^{r_{i}})$ has valuation greater than $r_{i}$, so that  %
$\gamma_{i}+x_{n}^{r_{i}}\delta_{i}$ is the first term in the power series expansion of $\alpha_{i}$. By continuing in this way, it now easily follows that the algorithm correctly calculates the $x_{n}$-adic power series expansions of all the roots of $f(z)$, up to any height. %
\end{proof}

\subsubsection{The full multivariate Newton-Puiseux algorithm}

We now indicate how \cref{alg:DiscreteNPAlgorithm} gives rise to an algorithm that calculates the $\mathfrak{m}$-adic power series expansions of all the roots of $f(z)$. Here, as before, $f(z)$ is a non-constant polynomial that splits completely over the strict Henselization $A^{\sh}_{\mathfrak{m}}$ of $A=K[x_{1},...,x_{n}]$ with respect to $\mathfrak{m}=(x_{1},...,x_{n})$. Recall that \cref{alg:DiscreteNPAlgorithm} calculates a series of coefficients $d_{i}\in{B^{\sh}_{\overline{\mathfrak{m}}}}$, where $B^{\sh}_{\overline{\mathfrak{m}}}$ is the strict Henselization of $B=K[x_{1},...,x_{n-1}]$ with respect to $\overline{\mathfrak{m}}=(x_{1},...,x_{n-1})$. We can calculate their $(x_{n-1})$-adic power series expansions using the following observation.  %
\begin{lemma}\label{lem:GaloisConjugates}
Every Galois conjugate of $d_{i}$ over $K(x_{1},...,x_{n-1})$ occurs in the $\mathfrak{p}$-adic power series expansion of a root $\alpha_{i}\in{V(f(z))}$. 
\end{lemma}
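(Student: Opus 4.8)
The plan is to use the fact that $A^{\sh}_{\mathfrak{p}}$ is a strictly Henselian discrete valuation ring whose residue field is $K(x_{1},\dots,x_{n-1})^{\sep}$, and that the $\mathfrak{p}$-adic expansion is governed by the ring-theoretic section of the residue map constructed just before Lemma \ref{lem:IntegralityPowerSeries}. Fix a root $\alpha_{i}\in V(f(z))\subset A^{\sh}_{\mathfrak{m}}\subset A^{\sh}_{\mathfrak{p}}$ whose $\mathfrak{p}$-adic expansion has $d_{i}$ as one of its coefficients, say $\alpha_i = \sum_{j} d_{j}x_{n}^{j}$ with $d_{i}\in B^{\sh}_{\overline{\mathfrak{m}}}$ by Lemma \ref{lem:IntegralityPowerSeries}. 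First I would observe that the Galois group $G_{0}:=\text{Gal}(K(x_{1},\dots,x_{n-1})^{\sep}/K(x_{1},\dots,x_{n-1}))$ acts on $A^{\sh}_{\mathfrak{p}}$: indeed, $A^{\sh}_{\mathfrak{p}}$ is the strict Henselization of the DVR $S^{-1}_{\mathfrak{p}}A$, and $G_{0}$ is identified with the absolute Galois group of the residue field, which by Hensel/strict-Henselization theory acts on $A^{\sh}_{\mathfrak{p}}$ fixing the Henselization $A^{h}_{\mathfrak{p}}=\left(S^{-1}_{\mathfrak{p}}A\right)^{h}$; see \cite[\href{https://stacks.math.columbia.edu/tag/04GU}{Lemma 04GU}]{stacks-project}. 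The key compatibility to record is that this action is continuous for the $\mathfrak{p}$-adic topology and commutes with the section of $A^{\sh}_{\mathfrak{p}}\to K(x_1,\dots,x_{n-1})^{\sep}$, so that it acts coefficientwise on $\mathfrak{p}$-adic expansions: $\sigma\left(\sum_{j}d_{j}x_{n}^{j}\right)=\sum_{j}\sigma(d_{j})x_{n}^{j}$ for $\sigma\in G_{0}$.

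Next I would use that $f(z)\in B[z]\subset A[z]$ has coefficients in $A^{h}_{\mathfrak{p}}$, hence is fixed by $G_{0}$; therefore $\sigma$ permutes the root set $V(f(z))$. Given any Galois conjugate $d_{i}'=\sigma(d_{i})$ of $d_{i}$ over $K(x_{1},\dots,x_{n-1})$ (lifting $\sigma$ to an element of $G_{0}$ via surjectivity of restriction onto $\text{Gal}(K(x_1,\dots,x_{n-1})(d_i)/K(x_1,\dots,x_{n-1}))$), applying $\sigma$ to the expansion $\alpha_{i}=\sum_{j}d_{j}x_{n}^{j}$ yields $\sigma(\alpha_{i})=\sum_{j}\sigma(d_{j})x_{n}^{j}$, and $\sigma(\alpha_{i})\in V(f(z))$. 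Since the coefficient of $x_{n}^{i}$ in this expansion is $\sigma(d_{i})=d_{i}'$, this exhibits $d_{i}'$ as a coefficient in the $\mathfrak{p}$-adic expansion of the root $\sigma(\alpha_{i})\in V(f(z))$, which is exactly the claim.

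The main obstacle is the first step: making precise that $G_{0}$ acts on $A^{\sh}_{\mathfrak{p}}$ compatibly with the chosen section, and in particular that the section (which is only ring-theoretic, not canonical) intertwines the residue-field Galois action with the action on $A^{\sh}_{\mathfrak{p}}$. The cleanest route is to note that $A^{\sh}_{\mathfrak{p}}$ is the integral closure of $A^{h}_{\mathfrak{p}}$ in the maximal unramified extension, so $\text{Gal}(A^{\sh}_{\mathfrak{p}}/A^{h}_{\mathfrak{p}})\xrightarrow{\ \sim\ }G_{0}$, and the section arising from the bottom-right arrow of \eqref{eq:CommutativeDiagram} is $G_{0}$-equivariant because it is the unique Teichm\"uller-type lift compatible with the Henselian structure; one must also check that $\alpha_i$ and all its conjugates genuinely lie in $A^{\sh}_{\mathfrak p}$, which follows since $f$ splits over $A^{\sh}_{\mathfrak m}\subset A^{\sh}_{\mathfrak p}$. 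Everything else is a short diagram chase once this equivariance is in hand.
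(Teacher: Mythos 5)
Your proposal is correct and follows essentially the same route as the paper: the paper likewise notes that since $f(z)$ splits over the strict Henselization, the decomposition-group action reduces to $\mathrm{Gal}(K(x_{1},\dots,x_{n-1})^{\sep}/K(x_{1},\dots,x_{n-1}))$, lifts a conjugate of $d_{i}$ to an automorphism $\sigma$ of $A^{\sh}_{\mathfrak{p}}$ (via \cite[\href{https://stacks.math.columbia.edu/tag/09ZL}{Lemma 09ZL}]{stacks-project}), and uses that $\sigma$ fixes $x_{n}$ and preserves the set of representatives to conclude that $\sigma(\alpha)=\sum\sigma(d_{j})x_{n}^{j}$ is again a root, which is exactly your coefficientwise argument with the $G_{0}$-equivariance of the section playing the role of the invariance of the representatives. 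The only blemishes are cosmetic: $f(z)$ lies in $A[z]$, not $B[z]$ (what matters is only that its coefficients are fixed by the action), and the ``unique Teichm\"uller-type lift'' phrasing is looser than needed, since the image of the section is just the set of elements of $A^{\sh}_{\mathfrak{p}}$ separably algebraic over $K(x_{1},\dots,x_{n-1})$, which is visibly stable under the action.
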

\begin{proof}
Since $f(z)$ splits over the strict Henselization, the action of the decomposition group reduces to the action of %
$\mathrm{Gal}(K(x_{1},...,x_{n-1})^{\sep}/K(x_{1},...,x_{n-1}))$. %
 Note that a conjugate of $d_{j}$ comes from an automorphism $\sigma$ of the strict Henselization by \cite[\href{https://stacks.math.columbia.edu/tag/09ZL}{Lemma 09ZL}]{stacks-project}. %
 One directly finds that the power series expansion of $\sigma(\alpha)$ is 
$\sum \sigma(d_{i})x_{n}^{i}$
since the action on $x_{n}$ is trivial and the set of representatives is invariant under the action of the decomposition group. This gives the statement of the lemma.  
\end{proof}

Using \cref{lem:GaloisConjugates}, we find that every conjugate of $d_{i}$ is also a coefficient in the power series of a root of $f(z)$. To calculate the $x_{n-1}$-adic power series of the $d_{i}$, we can thus continue with the minimal polynomial $h_{i}$ of $d_{i}$.
\begin{remark}
To calculate the minimal polynomial $h_{i}$ of $d_{i}$ over $K(x_{1},...,x_{n-1})$, we can eliminate variables as in \cite[Remark 2.6]{ABPR20}. This will be implemented in the near future. 
\end{remark}

We can assume without loss of generality that $h_{i}$ is defined over $B$. We now apply \cref{alg:DiscreteNPAlgorithm} to the $h_{i}$ to calculate the $(x_{n-1})$-adic power series expansions of the roots of the $h_{i}$, after which we calculate the $(x_{n-2})$-adic expansions of the coefficients in these, and so on. Eventually, the polynomials $h_{i}$ will be defined over $K$, and the roots of these give the desired $(x_{1},...,x_{n})$-adic coefficients of the roots of $f(z)$ as in \cref{eq:FinalCoefficients}. %

\begin{theorem}\label{thm:MainThmNumber2}
Let $f(z)$ be a non-constant polynomial over $A=K[x_{1},...,x_{n}]$ that splits completely over the strict Henselization of $A$ with respect to $\mathfrak{m}=(x_{1},...,x_{n})$. 
By iteratively applying \cref{alg:DiscreteNPAlgorithm} to the minimal polynomials of the $x_{k}$-adic coefficients, we obtain the %
$\mathfrak{m}$-adic power series expansions of the roots $\alpha_{i}$ of $f(z)$ up to any height. 
\end{theorem}
\begin{proof}
By the considerations in \cref{sec:MapsSH}, we find that the $x_{k}$-adic coefficients at any individual step are again defined over $A^{\sh}_{k-1}$, which is the strict Henselization of $A_{k}=K[x_{1},...,x_{k-1}]$ with respect to $\mathfrak{m}_{k}=(x_{1},...,x_{k-1})$. By \cref{lem:GaloisConjugates}, every root of a minimal polynomial of a coefficient is the coefficient of another power series. Using \cref{thm:DiscreteNPAlgorithm} on these minimal polynomials, we now inductively find that the $\mathfrak{m}$-adic expansions of the roots are computed correctly.  
\end{proof}

\begin{example}\label{exa:MainExample1}
The computations done in this example can all be found in the accompanying \texttt{Example3.20.jl} and \texttt{Example3.20Additional.jl}. 
Let $A=\overline{\mathbb{Q}}[x_{1},x_{2}]$ with $\mathfrak{m}=(x_{1},x_{2})$, $\mathfrak{p}_{1}=(x_{1})$, $\mathfrak{p}_{2}=(x_{2})$ and $t=x_{1}x_{2}$. 
Consider the polynomial %
\begin{align*}
f(z)& = (t^2+1)z^4+(t^4x_{1}+t^4)z^3+(t^{10}x_{1}^2+(t^{12}-2)x_{1}+t^{10})z^2+\\
& +(t^{14}x_{1}^3+t^{16}x_{1}^2+t^{16}x_{1}+t^{14})z+
 (t^{22}x_{1}^4+t^{8}x_{1}^3+(1+t^6)x_{1}^2+t^8x_{1}+t^{22}).
\end{align*}
 If we view $f(z)$ as a polynomial in the variables $x_{1}$ and $z$ over the field $K=\overline{\mathbb{Q}}\{\{t\}\}$ of Puiseux series over $\overline{\mathbb{Q}}$, %
then it gives a smooth plane quartic $X$ in $\mathbb{P}^{2}_{K}$. Moreover, the algebra $A$ will correspond to a modified version of the coordinate ring of an annulus, see   %
\cref{exa:MainExample2}. The calculation given here will be used in that example to calculate the dual intersection graph of a semistable model of $X$.  %
We first apply the Newton-Puiseux algorithm with respect to $\mathfrak{p}_{2}=(x_{2})$\footnote{It will follow from our calculations that $f$ splits completely over the strict Henselization of $A$ with respect to $\mathfrak{m}$, but we will also give an abstract reason for this in \cref{sec:CoveringsSemistableModels}.}. The first iteration of \texttt{InitialTropicalization()} does nothing, as the Newton polygon is trivial. In the accompanying example code, we now simply run \texttt{IterationNP(C,1,5)}, which calculates the power series expansions of the first root up to order $5$. We will go through each of the individual steps here.  We apply the \'{e}tale routine and calculate the reduction of $f$ modulo $(x_{2})$:
\begin{lstlisting}[basicstyle=\ttfamily\scriptsize, escapeinside={(*@}{@*)}]
julia> f2=red(f)
z^4 - 2*z^2*x1^4 + x1^8
\end{lstlisting}
We now calculate an irreducible factorization of \texttt{f2}:
\begin{lstlisting}[basicstyle=\ttfamily\scriptsize, escapeinside={(*@}{@*)}]
julia> S=FactorDomain(R3,J,f2)
2-element Vector{Any}:
 -z + x1^2
 z + x1^2
\end{lstlisting}
We thus have two roots, $z=\pm x_{1}^{2}$. Both of these are of multiplicity $2$. We then translate $f$ over each of these to obtain two new polynomials $f_{\pm,1}$. This is done using \texttt{RetrieveNewPolynomialsEtale(S,C,1)}. We then redefine \texttt{C} by removing the old polynomial, and adding these two new polynomials.

We now move to the tropical routine. We calculate the positive tropical roots of $f_{\pm,1}$ using \texttt{TropicalRoots()}. This gives
\begin{lstlisting}[basicstyle=\ttfamily\scriptsize, escapeinside={(*@}{@*)}]
julia> TropicalRoots(R3,f,3)
2-element Vector{QQFieldElem}:
 0
 1
\end{lstlisting}
 In both cases, $1$ is the only positive root. %
We then scale $f_{\pm,1}$ using \texttt{ValuationScaling(R,f,1,3)}. Here the $3$ is used to denote the current parameter variable. This gives us the polynomials $f_{\pm,2}$. As before, we then apply the \'{e}tale routine to find an irreducible factorization of the reduction modulo $(x_{2})$ of $f_{\pm,2}$. The code is as follows:
\begin{lstlisting}[basicstyle=\ttfamily\scriptsize, escapeinside={(*@}{@*)}]
julia> f=ValuationScaling(R3,f,1,3);
julia> g=red(f);
julia> S=FactorDomain(R3,J,g);
julia> S2=FilterFactor(R3,S);
julia> S2
1-element Vector{Any}:
 4*z^2 + x1^6
\end{lstlisting}
Here we use \texttt{FilterFactor()} to remove any polynomials of degree $0$ ($x_{1}$ divides the polynomial, so we remove this). The polynomial $4z^2 + x_{1}^6$ is irreducible since we are working over $\mathbb{Q}$. It has the distinct roots $\pm{i}x_{1}^{3}$ over $\overline{\mathbb{Q}}$ (this also come out of the discrete Newton-Puiseux algorithm applied to this polynomial with respect to $x_{1}$, see the accompanying code). We note that these roots are distinct, so that we can apply Hensel's lemma to conclude that the roots lift uniquely, as expected. Continuing in this way, we eventually find the $x_{2}$-adic power series expansions of the roots up to height $5$. This yields the following output for the first root:
\begin{lstlisting}[basicstyle=\ttfamily\scriptsize, escapeinside={(*@}{@*)}]
julia> C[1][7]
5-element Vector{Any}:
 Any[QQMPolyRingElem[x1^2 1], 0]
 Any[QQMPolyRingElem[w1, 1], 1]
 Any[QQMPolyRingElem[-3//8*x1^4 1], 2]
 Any[QQMPolyRingElem[-x1^10 + 5//4*x1^8 - x1^6 8*w1], 3]
 Any[QQMPolyRingElem[-1//4*x1^8 + 35//128*x1^6 - 1//4*x1^4 1], 4]
\end{lstlisting}
    Here $4w_{1}^2+x_{1}^6=0$. For the second root, we find
    \begin{lstlisting}[basicstyle=\ttfamily\scriptsize, escapeinside={(*@}{@*)}]
julia> C[2][7]
5-element Vector{Any}:
 Any[QQMPolyRingElem[-x1^2 1], 0]
 Any[QQMPolyRingElem[w1, 1], 1]
 Any[QQMPolyRingElem[3//8*x1^4 1], 2]
 Any[QQMPolyRingElem[x1^10 + 5//4*x1^8 + x1^6 8*w1], 3]
 Any[QQMPolyRingElem[-1//4*x1^8 - 35//128*x1^6 - 1//4*x1^4 1], 4]
\end{lstlisting}
The relation for $w_{1}$ is the same here. 
This gives the $(x_{2})$-adic power series expansions 
    \begin{align*}
\gamma_{1}&=x_{1}^2+1/2ix_{1}^3x_{2}+(3/8 \cdot x_{1}^4)x_{2}^2+...\\
\gamma_{2}&=x_{1}^2-1/2ix_{1}^3x_{2}+(3/8 \cdot x_{1}^4)x_{2}^2+...\\
\gamma_{3}&=-x_{1}^2+1/2ix_{1}^3x_{2}+(-3/8\cdot x_{1}^4)x_{2}^2+...\\
\gamma_{4}&=-x_{1}^2-1/2ix_{1}^3x_{2}+(-3/8\cdot x_{1}^4)x_{2}^2+...
\end{align*} 
    These then also yield the $\mathfrak{m}$-adic power series expansions, modulo $(x_{2})^3$.

We now calculate the $\mathfrak{p}_{1}$-adic expansions. We find 
\begin{lstlisting}[basicstyle=\ttfamily\scriptsize, escapeinside={(*@}{@*)}]
julia> C[1][7]
3-element Vector{Any}:
 Any[QQMPolyRingElem[1 1], 2]
 Any[QQMPolyRingElem[w1, 1], 3]
 Any[QQMPolyRingElem[-1//4*x2^4 - 3//8*x2^2 1], 4]
\end{lstlisting}
for the first polynomial. Here $4w_{1}^2 + x_{2}^4 + x_{2}^2=0$, which is irreducible over $\mathbb{Q}(x_{2})$.  Note that this corresponds to two roots of $f(z)$. Indeed, for each root $\alpha_{i}$ of $4w_{1}^2 + x_{2}^4 + x_{2}^2=0$, we obtain one root of $f(z)$. %
For the second polynomial in \texttt{C}, we find %
\begin{lstlisting}[basicstyle=\ttfamily\scriptsize, escapeinside={(*@}{@*)}]
julia> C[2][7]
3-element Vector{Any}:
 Any[QQMPolyRingElem[-1 1], 2]
 Any[QQMPolyRingElem[w1, 1], 3]
 Any[QQMPolyRingElem[-1//4*x2^4 + 3//8*x2^2 1], 4],
\end{lstlisting}
where $-4w_{1}^2 + x_{2}^4 - x_{2}^2=0$. We write $\beta_{i}$ for the two roots of this polynomial. All in all, we have the following $\mathfrak{p}_{1}$-adic power series expansions   
\begin{align*}
\delta_{1}&=x_{1}^2+\alpha_{1} x_{1}^3+(-1/4\cdot x_{2}^4 - 3/8\cdot x_{2}^2)x_{1}^4,\\
\delta_{2}&=x_{1}^2-\alpha_{2} x_{1}^3+(-1/4\cdot x_{2}^4 - 3/8\cdot x_{2}^2)x_{1}^4,\\
\delta_{3}&=-x_{1}^2+\beta_{1} x_{1}^3+(-1/4\cdot x_{2}^4 + 3/8\cdot x_{2}^2)x_{1}^4,\\
\delta_{4}&=-x_{1}^2+\beta_{2} x_{1}^3+(-1/4\cdot x_{2}^4 + 3/8\cdot x_{2}^2)x_{1}^4.
\end{align*}
We note here that the coefficients for $x_{1}^{5}$ again depend on the $\alpha_{i}$ and $\beta_{i}$, and this periodic pattern repeats. 

To calculate the $\mathfrak{m}$-adic expansions, we now apply the Newton-Puiseux algorithm to the $\alpha_{i}$ and $\beta_{i}$ with respect to %
$(x_{2})$. For the $\alpha_{i}$, this yields 
\begin{lstlisting}[basicstyle=\ttfamily\scriptsize, escapeinside={(*@}{@*)}]
julia> C[1][7]
5-element Vector{Any}:
 Any[QQMPolyRingElem[w1, 1], 1]
 Any[QQMPolyRingElem[-1 8*w1], 3]
 Any[QQMPolyRingElem[-4 -128*w1], 5]
 Any[QQMPolyRingElem[-8192 524288*w1], 7]
 Any[QQMPolyRingElem[-351843720888320 -36028797018963968*w1], 9]
\end{lstlisting}
where $4w_{1}^2+1=0$. In other words, we have 
\begin{align*}
\alpha_{i}=w_{1}x_{2}-(1/(8w_{1}))x_{2}^3+(4/(128\cdot w_{1}))x_{2}^5+...
\end{align*}
Similarly, we find 
\begin{lstlisting}[basicstyle=\ttfamily\scriptsize, escapeinside={(*@}{@*)}]
julia> C[1][7]
5-element Vector{Any}:
 Any[QQMPolyRingElem[w1, 1], 1]
 Any[QQMPolyRingElem[-1 -8*w1], 3]
 Any[QQMPolyRingElem[4 128*w1], 5]
 Any[QQMPolyRingElem[-8192 -524288*w1], 7]
 Any[QQMPolyRingElem[351843720888320 36028797018963968*w1], 9],
\end{lstlisting}
so that 
\begin{align*}
\beta_{i}=w_{1}x_{2}+(1/(8w_{1}))x_{2}^3+(4/(128w_{1}))x_{2}^5+...
\end{align*}
Here $4w_{1}^2+1=0$ again. 
By comparing the $\mathfrak{m}$-adic coefficients of $\gamma_{i}$ and $\delta_{i}$, we see that this identifies every $\gamma_{i}$ with $\delta_{i}$, so that the labeling is preserved. For an %
example where the labeling changes, see \cref{exa:PlaneQuartic}.     %
\end{example}

\begin{remark}\label{rem:NumberFieldAlgorithm}
Let $K$ be a number field, let $\mathcal{O}_{K}$ be its ring of integers and let $\mathfrak{p}$ be a nonzero prime ideal with localization $\mathcal{O}_{K,\mathfrak{p}}$. %
Since $\mathcal{O}_{K,\mathfrak{p}}$ is a discrete valuation ring, we have that $\mathcal{O}_{K,\mathfrak{p}}[x_{1},..,x_{n}]$ is again a unique factorization domain. Moreover, the procedure given above works verbatim till the last step where we have an element $\alpha$ that splits completely over $\mathcal{O}_{K,\mathfrak{p}}^{\sh}$. For these, one can use similar variant of the one-dimensional Newton-Puiseux algorithm to calculate the power series expansions of $\alpha$, after choosing a {set-theoretic} splitting of the residue field map $\mathcal{O}_{K,\mathfrak{p}}\to k(\mathfrak{p})$. 
\end{remark}

\begin{remark}\label{rem:GeneralizationNumberFields}
Let $R$ be a discrete valuation ring with uniformizer $\pi$ and let  %
$Y=\Spec(A)$, where 
$$A=R[y_{0},...,y_{n}]/(y_{0}\cdots y_{r}-\pi).$$
We write $\mathfrak{m}=(y_{0},...,y_{n},\pi)$. The space $Y$ is a standard \'{e}tale-local chart of a (regular) semistable model over $R$. If $R=k[\pi]_{(\pi)}$ and $f(z)\in{A}[z]$ is a polynomial that splits completely over the strict Henselization $A^{\sh}$, then we can automatically use our algorithm since the base ring $A$ is isomorphic to a localization of the polynomial ring $k[y_{0},...,y_{n},\pi]$. %

Suppose however that $R=\mathcal{O}_{K,\mathfrak{p}}$ as in \cref{rem:NumberFieldAlgorithm} and let $\pi$ again be a uniformizer. The above modification then doesn't work, since we have prime ideals that are defined over fields of different characteristics. We can now modify our algorithm using  
\cite[\href{https://stacks.math.columbia.edu/tag/04D1}{Lemma 04D1}]{stacks-project} as follows. First, we reduce to the case where %
$r=n$. We then have $A/(y_{n})\simeq k(\mathfrak{p})[y_{0},...,y_{n-1}]=:\overline{A}$. As before, we obtain an induced map of strict Henselizations 
\begin{equation*}
r: A^{\sh}_{\mathfrak{m}} \to \overline{A}^{\sh}_{\overline{\mathfrak{m}}}.
\end{equation*} 
To lift elements from $\overline{A}^{\sh}_{\overline{\mathfrak{m}}}$ to $A^{\sh}_{\mathfrak{m}}$, %
consider an $\overline{A}$-algebra $\overline{B}$ that is \'{e}tale over $\overline{\mathfrak{m}}=(y_{1},...,y_{n-1})$. This algebra $\overline{B}$ can be lifted to an $A$-algebra $B$ that is \'{e}tale over $\mathfrak{m}$, see the proof of \cite[\href{https://stacks.math.columbia.edu/tag/04D1}{Lemma 04D1}]{stacks-project}. We then choose a set-theoretic section $\overline{B}\to{B}$ of the quotient map $B\to \overline{B}$.  %
By a standard set-theoretic limit argument,
we can combine these to obtain a section 
\begin{equation*}
s: \overline{A}^{\sh}_{\mathfrak{m}}\to A^{\sh}_{\mathfrak{m}}
\end{equation*}
of $r$. 
Moreover, we can extend this splitting to $\overline{A}^{\sh}_{\overline{\mathfrak{p}}}=k(y_{1},...,y_{n-1})^{\sep}\to A^{\sh}_{\mathfrak{p}}$, so that we again obtain a commutative diagram as in \cref{eq:CommutativeDiagram}. To calculate the $\mathfrak{m}$-adic power series expansions of an element in $s(\overline{A}^{\sh})$, note that the corresponding equation can be defined over $\mathcal{O}_{K,\mathfrak{p}}[y_{1},...,y_{n-1}]$. We can then use the modification in \cref{rem:NumberFieldAlgorithm} to calculate the corresponding $\mathfrak{m}$-adic power series expansions. 
\end{remark}

\begin{remark}\label{rem:PowerSeriesVSCompletions}
Since the power series constructed in \cref{sec:PowerSeries} are naturally elements of a certain completion, one might wonder if the material in this section can be rephrased using completions. Note however that for primes $\mathfrak{m}\supset \mathfrak{p}$ in a commutative ring $A$, one in general does not obtain a morphism of completions $\hat{A}_{\mathfrak{m}}\to \hat{A}_{\mathfrak{p}}$. Suppose for instance we first localize to obtain a morphism $A_{\mathfrak{m}}\to A_{\mathfrak{p}}$. %
If we endow these rings with their respective adic topologies, then this map %
is not continuous, %
so it does not extend to a map of completions. Heuristically, one can also see this as follows. %
Let $A=K[x_{1},x_{2}]$ and $\mathfrak{m}=(x_{1},x_{2})\supset \mathfrak{p}=(x_{2})$. Then a power series
\begin{equation*}
\alpha_{i}=\sum c_{i,j}x_{1}^{i}x_{2}^{j}
\end{equation*}
would have to be sent to 
\begin{equation*}
\alpha_{i}=\sum d_{j}x_{2}^{j},
\end{equation*}
which in general is impossible since the individual sums $\sum_{i=0}^{\infty} c_{i,j}x_{1}^{i}$ are not algebraic. %
Note that a similar problem arises if one does not first localize. Working with strict Henselizations allows us to surpass these difficulties, as we have maps $A^{\sh}_{\mathfrak{m}}\to A^{\sh}_{\mathfrak{p}}$. %
\end{remark}

\begin{remark}\label{rem:ModifiedAlgorithm}
A modified version of the algorithm also works if $f(z)$ is defined over %
$A^{\mathrm{sh}}$. Namely, we can represent the elements in this ring symbolically using elements in \'{e}tale algebras. %
 One has to be slightly more careful with Galois conjugates as in \cref{lem:GaloisConjugates} however, as they have to be taken relative to the field of definition of $f(z)$. One can also simply try all of the different orbits.   %
\end{remark}

\begin{remark}
In many situations in practice, one can modify the local algebra to obtain an algebra $A$ and polynomial $f(z)$ of the type considered in this section. For instance, for morphisms of semistable models this can be achieved using %
using Kummer extensions, see \cref{lem:KummerExtensions}. 
\end{remark}

\begin{remark}
Let $A=K[x_{1},...,x_{n}]$ and let $S\subset \Spec(A)=X$ be the poset generated by monomial prime ideals, so that $\mathfrak{m}=(x_{1},...,x_{n})$ is the maximal element of $S$. Note that the order complex of this poset is the barycentric subdivision of the ordinary $n$-simplex. 
In this section, we discussed a general algorithm for calculating the power-series expansions over prime ideals in $S$. %
It follows from the construction in \cref{thm:MainThmNumber2} that the individual approximations are in fact compatible sets of approximations in the sense of \cref{def:CompatibleSeparatingSets}. Thus, if one can calculate the actions of $D_{e^{\sep}}$, $D_{x^{\sep}}$ and $D_{y^{\sep}}$ on these approximations, then one can calculate the poset structure over $S$. We will see how to calculate this Galois action for $n=2$ in \cref{sec:FindingGaloisOrbits}.   %
\end{remark}

\section{Coverings of semistable models of curves}\label{sec:CoveringsSemistableModels}

In this section we apply the techniques from this paper to obtain a full algorithm to calculate the maps of dual intersection graphs associated to tame coverings of semistable models of curves. This includes equations for the residue curves and the lengths of the edges. We also indicate how these calculations allow us to naturally work with analytic equations of annuli, with an eye towards future applications in $p$-adic integration.    

\subsection{A review}\label{sec:ReviewSemistable}
We first review some terminology and well-known results on coverings of semistable models. Throughout this section, $K$ will be a discretely valued complete field of characteristic zero with valuation ring $R$, uniformizer $\pi$ and normalized valuation $v(\pi)=1$. We will assume for simplicity that the residue field $k$ is algebraically closed. 
All curves in this section are smooth, proper and geometrically connected. A marked curve $(X,D)$ is a curve $X/K$ together with a finite set of closed points $D\subset{X}$.   
Let $(X',D')$ and $(X,D)$ be marked curves. %
We assume for simplicity that $D'\subset X'(K)$ and $D\subset X(K)$. %
A covering of marked curves is a covering $X'\to{X}$ with $\phi^{-1}(D)=D'$ such that the ramification locus of $\phi$ is contained in $D'$. A strongly semistable model of a marked curve $(X,D)$ is a semistable model $\mathcal{X}$ of $X$ (see \cite[Chapter 10, Definition 3.14]{liu2}) such that every irreducible component of the special fiber $\mathcal{X}_{s}$ is smooth, and every point of $D$ reduces to a smooth point of $\mathcal{X}_{s}$ under the reduction map 
\begin{equation*}
\mathrm{red}:X(K)\to \mathcal{X}_{s}(k).
\end{equation*}
We recall the definition of the reduction map here. %
From the open immersion $X=\mathcal{X}_{\eta}\to \mathcal{X}$, we obtain the map $X(K)=\mathcal{X}_{\eta}(K)\to \mathcal{X}(K)=\mathcal{X}(R)$, where the last equality follows from the valuative criterion of properness. We then compose this map with $\mathcal{X}(R)\to \mathcal{X}(k)=\mathcal{X}_{s}(k)$ to obtain the desired reduction map.      
\begin{definition}\label{def:SFPoset}
Let $\mathcal{X}$ be a strongly semistable model for $(X,D)$, and 
let $S_{\mathcal{X}}\subset \mathcal{X}$ be the subset consisting of the images of the following points under the closed embedding %
$\mathcal{X}_{s}\to\mathcal{X}$: %
\begin{enumerate}
\item The generic points of $\mathcal{X}_{s}$. %
\item The intersection points of the components of $\mathcal{X}_{s}$. 
\item %
The reductions of the points in $D$ in $\mathcal{X}_{s}$. 
\end{enumerate}
We endow $S_{\mathcal{X}}$ with the poset structure induced from $\mathcal{X}$. 
We call $S_{\mathcal{X}}$ the poset associated to the model $\mathcal{X}$ of $(X,D)$. %
The corresponding dual intersection graph $\Sigma(\mathcal{X})$ will be referred to as the skeleton of $\mathcal{X}$. A skeleton of a marked curve $(X,D)$ is a skeleton of a strongly semistable model $\mathcal{X}$ of $(X,D)$. 
\end{definition}

\begin{lemma}
Let $\mathcal{X}$ be a strongly semistable model of a curve $X$. Then $\mathcal{X}$ is relatively unibranch with respect to any pair of points $x\geq{y}$ (see \cref{def:RelativelyUnibranch}). 
\end{lemma}
\begin{proof}
We go over the different options for points of $\mathcal{X}$. For pairs $x\supset \eta$ where $\eta$ is the generic point of $\mathcal{X}$, this follows from the fact that normal schemes are geometrically unibranch. Let $x\in\mathcal{X}$ be a closed point that is smooth over $\text{Spec}(R)$. Then the local ring $\mathcal{O}_{\mathcal{X},x}$ is a unique factorization domain, and a generator of a codimension one point $y$ can be extended to a system of parameters for $x$. We then use \cref{exa:RegularPointsRelUnibranch}. Suppose that $x$ is an ordinary double point. If $y$ is the generic point of one of the irreducible components of the special fiber, then this follows from our assumption that $\mathcal{X}$ be strongly semistable. If $y$ is a closed point of the generic fiber, then one can use \cite[Chapter 10, Proposition 1.40]{liu2} to conclude that there is only one point over $y$ in the completion, and thus also in the Henselization.           
\end{proof}

To find skeleta in practice, we use the following well-known result: 

\begin{proposition}\label{pro:SimultaneousSemistable}
Let $(X',D')\to{(X,D)}$ be a covering of marked curves and let $\mathcal{X}$ be a strongly semistable model of $(X,D)$. Suppose that the morphism of normalizations 
\begin{equation*}
\mathcal{X}'\to\mathcal{X}
\end{equation*} 
is tame over every point of codimension one. Then there is a finite tame extension $R'$ of $R$ such that the normalized base change $\mathcal{X}'_{R'}$ is strongly semistable. Moreover, ordinary double points are sent to ordinary double points under the map $\mathcal{X}'_{R'}\to \mathcal{X}_{R'}$.  %
\end{proposition}
\begin{proof}
See \cite[Theorem 2.3]{liulorenzini1999} and \cite[Proposition 4.30]{liu2} for the case when $\phi$ is a Galois covering. The general case easily follows from the results in \cite{SGA1} or the considerations in \cref{lem:KummerExtensions}. %
We leave the details to the reader. They can also be found in an older version of this manuscript \cite{H2021}.   
\end{proof}

\begin{remark}
Throughout the rest of \cref{sec:CoveringsSemistableModels}, we will write $\mathcal{X}'\to \mathcal{X}$ for a morphism of semistable models of general curves, and $\mathcal{X}\to \mathcal{Y}$ for a morphism of semistable models associated to a covering $X\to \mathbb{P}^{1}$.
We will also write $x_{e}\in S_{\mathcal{X}}$ for an ordinary double point. Here we think of the ordinary double point as corresponding to an edge $e$ in the dual intersection graph $\Sigma(\mathcal{Y})$ of $\mathcal{Y}$, or an %
open line segment in the Berkovich analytification of $X$. We will try to make it clear when $e$ is used for an edge in the Hasse diagram of $S_{\mathcal{Y}}$ and when $e$ is used in the sense above. 
\end{remark}

\subsection{Outline of the algorithm}\label{sec:OutlineAlgorithm}

In this section we give an outline of our algorithm to find the dual intersection graph of a semistable model for a curve $X$. The individual parts of the algorithm will be explained in the upcoming sections. The input of the algorithm is a finite separable morphism $\phi: X\to \mathbb{P}^{1}$ given by a polynomial $f(y)\in K(x)[y]$ that is sufficiently tame in the sense of \cref{pro:SimultaneousSemistable}. The output is the poset $S_{\mathcal{X}}$ of a semistable model $\mathcal{X}$ of $X$. This also directly gives the dual intersection graph.   
\vspace{0.1cm}
\begin{center}
{\underline{Algorithm for finding dual intersection graphs of semistable models}}
\end{center}
\vspace{0.1cm}
\begin{enumerate}
\item Calculate sufficiently precise approximations of the branch locus $D$ of $\phi$ and deduce a semistable model $\mathcal{Y}$ of $(\mathbb{P}^{1},D)$ with poset $S_{\mathcal{Y}}$ from this data. Write $\phi_{\mathcal{O}_{K}}:\mathcal{X}\to \mathcal{Y}$ for the induced morphism of semistable models arising from \cref{pro:SimultaneousSemistable}\footnote{Here we assume for simplicity that the finite extension in \cref{pro:SimultaneousSemistable} has already been made. In the algorithms, one has to make a tamely ramified extension whenever one encounters a rational slope when computing Newton polygons as in \cref{sec:NPAlgorithm}.  }. %
\item For every pair $(x_{1},x_{e})\in S^{2}_{\mathcal{Y}}$ of a codimension $1$ point $x_{1}$ and an ordinary double point %
$x_{e}$ in $S_{\mathcal{Y}}$ with $x_{1}\leq x_{e}$, take a local chart around $x_{e}$ and calculate the $\mathfrak{m}_{x_{e}}$-adic power series expansions of the roots of $f(y)$ up to a separating height. Here $\mathfrak{m}_{x_{e}}$ is the maximal ideal corresponding to $x_{e}$ in the chosen chart.  %
\item Compare the $\mathfrak{m}_{x_{e}}$-adic power series expansions of pairs $(x_{1},x_{e})$ and $(x_{2},x_{e})$ as above that share the same intersection point $x_{e}$. 
\item Calculate the $D_{x_{1}^{\sep}/x_{1}}$ and $D_{x_{e}^{\sep}/x_{e}}$-orbits of the approximations for pairs  %
$(x_{1},x_{e})$ in $S^2_{\mathcal{Y}}$ as above. %
\item Use \cref{thm:MainThmv2} to deduce the structure of $S_{\mathcal{X}}=\phi^{-1}_{\mathcal{O}_{K}}(S_{\mathcal{Y}})$ from this data\footnote{The Hasse diagram of $S_{\mathcal{Y}}$ will be a tree, so that we do not need any transfer maps. See \cref{exa:ExplicitTransferMap} however for an example with transfer maps.  }. 
\end{enumerate}

The first part, including explicit equations for the charts, will be discussed in \cref{sec:ModelsP1}. The second and third parts then follow from the algorithm given in \cref{sec:NPAlgorithm}. The remaining parts will be discussed in \cref{sec:FindingGaloisOrbits}. We note that the data calculated here will also be sufficient to deduce the equations of the residue curves corresponding to generic points of the special fiber (see \cref{rem:ResidueCurves}), and the edge lengths (or: thicknesses) of the ordinary double points. This will also be explained in \cref{sec:FindingGaloisOrbits}.  We will discuss several examples in \cref{sec:Examples}. 

\subsection{Finding a model for the base curve}\label{sec:ModelsP1}

Let $\phi:X\to\mathbb{P}^{1}$ be a covering that is sufficiently tame in the sense of \cref{pro:SimultaneousSemistable} with branch locus $D$.
We write $U$ and $V$ for homogeneous coordinates on $\mathbb{P}^{1}$. %
We similarly use the coordinate $x=U/V$ on the local affine chart $D_{+}(V)$. %
We start by calculating sufficiently precise $\pi$-adic power series expansions with respect to $x$ for the points $P$ in the branch locus $D$ of $\phi$. Here sufficiently precise means that the $\pi$-adic power series expansions in the sense of \cref{prop:RegularExpansions} are distinct. %
We write points in $D$ as $P_{i}=[U_{i}:V_{i}]$. %
Consider the matrix $A=(a_{i,j})$ of tropical Pl\"{u}cker vectors corresponding to $D$, given by 
\begin{equation*}
a_{i,j}=\mathrm{val}(%
U_{i}V_{j}-U_{j}V_{i}).
\end{equation*}
We view the matrix $A$ as a point in a suitable tropical projective space, as in \cite[Section 4.3]{MS15}. Note that we can recover this matrix from our power series expansions. This data gives rise to a natural metric tree $\Sigma_{\mathrm{Berk}}$ in the Berkovich analytification $\mathbb{P}^{1,\mathrm{an}}$. Abstractly, this is the minimal skeleton of the marked curve $(\mathbb{P}^{1},D)$, see \cite[Corollary 4.23]{BPRa1}. It can also be reconstructed explicitly using the gluing technique in %
\cite[Section 2.1]{H2022}, or the cluster picture technique developed in  %
\cite{DDMM2023}.

This tree gives a canonical strongly semistable model $\mathcal{Y}$ for $(\mathbb{P}^{1},D)$ by the equivalence in \cite[Lemma 5.1]{ABBR2015}.
Explicitly, this model can be found as follows. We take the canonical minimal vertex set for $\Sigma_{\mathrm{Berk}}$, which gives rise to a natural set of vertices $V(\Sigma_{\mathrm{Berk}})$ and closed edges $E(\Sigma_{\mathrm{Berk}})$.  %
We can represent a finite closed edge $e\in{E(\Sigma_{\mathrm{Berk}})}$ by an inequality $a\leq v(x-c_{e})\leq b$ for $c_{e}\in{K}$ and $a,b\in\mathbb{Q}$. %
We can assume here that $a,b\in\mathbb{Z}$ after extending the field and rescaling the valuation. Let $m=b-a$. We associate a coordinate ring $A_{e}=R[u,v]/(uv-\pi^{m})$ to each of these edges, and we embed this ring into $K(\mathbb{P}^{1})=K(x)$ using the map $\psi_{e}:A_{e}\to{K(x)}$ given by %
\begin{align*}
\psi_{e}(u)&= \dfrac{x-c_{e}}{\pi^{a}},\\
\psi_{e}(v)&=\dfrac{\pi^{b}}{x-c_{e}}.
\end{align*}
Let $U_{e}=\Spec(\psi_{e}(A_{e}))$.
If we have two edges $e_{1}$ and $e_{2}$ that share a common vertex, then we obtain a canonical affine open subscheme $U_{e_{1},e_{2}}$ of $U_{e_{1}}$ and $U_{e_{2}}$, see \cref{exa:ExplicitModel} for an explicit example.
By iterating this process and gluing the $U_{e_{i}}$ along these open subschemes, %
 we obtain a global model $\mathcal{Y}$ of $\mathbb{P}^{1}$, which is easily seen to be a strongly semistable model for $(\mathbb{P}^{1},D)$.   %

\begin{example}\label{exa:ExplicitModel}
Consider the closed edges $e_{1}: 1\leq v(x-\pi)\leq 2$ and $e_{2}:1\leq v(x-\pi^{2})\leq 2$. These two edges meet at the vertex given by the equations $1=v(x-\pi)$ and $1=v(x-\pi^{2})$. We have  
\begin{align*}
\psi_{e_{1}}(A_{e_{1}})&=R[{(x-\pi)}/{\pi},{\pi^2}/{(x-\pi)}],\\
\psi_{e_{2}}(A_{e_{2}})&=R[{(x-\pi^{2})}/{\pi},{\pi^{2}}/{(x-\pi^{2})}].
\end{align*}The affine subscheme $U_{e_{1},e_{2}}=\Spec(B)$ of $U_{e_{1}}$ and $U_{e_{2}}$ is given by the composite ring %
$$B=R[{(x-\pi)}/{\pi},{\pi}/{(x-\pi)},{(x-\pi^{2})}/{\pi},{\pi}/{(x-\pi^{2})}].$$ %
Note that $B$ can be obtained from either $\psi_{e_{1}}(A_{e_{1}})$ or $\psi_{e_{2}}(A_{e_{2}})$ by a suitable localization.     
\end{example}

\begin{definition}
Let $A=R[u,v]/(uv-\pi^{n})$ be the standard coordinate ring of an annulus of length $n$. Let $A_{reg}=R[u_{1},v_{1}]/(u_{1}v_{1}-\pi)$. We define the regularization of $A$ to be the embedding $A\to A_{reg}$ given by 
\begin{align*}
u&\mapsto u_{1}^{n},\\
v&\mapsto v_{1}^{n}.
\end{align*}
Note that this defines a function field extension of degree $n$. 
If the embedding is clear from context, then we also refer to $A_{reg}$ as the regularization of $A$. If $A_{0}$ is an algebra isomorphic to $A$, then we similarly refer to the induced extension of $A_{0}$ as the regularization of $A_{0}$. 
\end{definition}

\begin{lemma}\label{lem:KummerExtensions}
Let $\mathcal{X}'\to \mathcal{X}$ be a finite dominant separable morphism of semistable models such that the closure of the generic branch locus is supported on the smooth locus of $\mathcal{X}\to \Spec(R)$, and write $f(z)$ for a polynomial that generates the function field extension $K(X)\to K(X')$. We suppose that $\mathcal{X}'\to \mathcal{X}$ is \'{e}tale over the generic points of the special fiber. Let $U\subset \mathcal{X}$ be an open affine that is \'{e}tale-locally isomorphic to the standard closed annulus $\Spec(A_{e})$. %
Then $f(z)$ splits completely over the field of fractions of the strict Henselization of the regularization of $A_{e}$ with respect to the maximal ideal $\mathfrak{m}=(u_{1},v_{1},\pi)=(u_{1},v_{1})$.    
\end{lemma} 
\begin{proof}
By assumption, there is no ramification over primes of codimension one arising from the special fiber. Moreover, there are no prime ideals of codimension $1$ contained in $\mathfrak{m}$ lying over the generic point of $\Spec(R)$ over which the covering is ramified by construction. We conclude using purity of the branch locus that the induced covering of the regularization is \'{e}tale over $\mathfrak{m}$. It quickly follows that $f(z)$ splits completely over the field of fractions of the strict Henselization with respect to $\mathfrak{m}$.     
\end{proof}

\subsection{Finding the Galois orbits}\label{sec:FindingGaloisOrbits}

In this section we give a description of how to find the Galois orbits necessary for computing skeleta of coverings of semistable models. The technique works for general coverings of semistable models as in \cref{pro:SimultaneousSemistable}, so we use the notation $\mathcal{X}'\to \mathcal{X}$ for the covering under consideration. We start with points of codimension one. %
Let $x'\in\mathcal{X}'$ be the image of a generic point of $\mathcal{X}'_{s}$ under the closed immersion $\mathcal{X}'_{s}\to\mathcal{X}'$, and let $x$ be its image in $\mathcal{X}$ under $\phi:\mathcal{X}'\to\mathcal{X}$. By our assumption on $\mathcal{X}'\to\mathcal{X}$, we have that %
a unifomizer $\pi\in{R}$ is again a local generator of the ideals corresponding to $x'$ and $x$. We will assume for simplicity that $f(z)$ is defined over $\mathcal{O}_{\mathcal{X},x}$, so that the roots are integral.   %
We can then express the $x$-adic power series expansions\footnote{Here, $x$-adic power series expansions means that we pick a local chart and calculate the  $\mathfrak{p}_{x}$-adic power series expansions, where $\mathfrak{p}_{x}$ is the corresponding prime ideal. } of a root %
$\alpha\in{V(f(z))}$ %
in terms of $\pi$ as follows: 
\begin{equation*}
\alpha=\sum_{i=0}^{\infty}c_{i}\pi^{i}.
\end{equation*}
Let $r>0$ be large enough so that the $r$-truncations of the roots (see \cref{def:Truncations}) are distinct. 
Suppose that $\alpha$ lies in the orbit corresponding to $x$. The reductions of the $c_{i}$ give a finite extension of fields 
\begin{equation*}
k(x)\subset \ell:=k(x)(\overline{c}_{0},...,\overline{c}_{r})\subset k(x)^{\sep}.
\end{equation*}   

We claim that $\ell$ is $k(x)$-isomorphic to $k(x')$. To that end, consider the induced extension of Henselizations $A:=\mathcal{O}^{\h}_{\mathcal{X},x}\subset B:=\mathcal{O}^{\h}_{\mathcal{X}',x'}$,  which is \'{e}tale by assumption. Note that the residue fields are not changed after passing to the %
Henselization. %
We now recall that there is an equivalence between  %
the category of finite \'{e}tale extensions of $A$ and the category of finite \'{e}tale extensions of $k(x)$, see \cite[\href{https://stacks.math.columbia.edu/tag/04GK}{Lemma 04GK}]{stacks-project}. Let $C$ be the \'{e}tale extension of $A$ corresponding to $\ell$ under this equivalence. Using Hensel's lemma and the fact that the power series expansions are unique, one directly sees that $\alpha\in{C}$. Suppose that $K(B)=K(A)(\alpha)$ is a strict subextension of $K(C)$, corresponding to an extension $k(x)\subset \ell'\subset \ell$. Let $m$ be the smallest integer such that $\overline{c}_{m}\notin \ell'$ and consider the element $z:=1/\pi^{m}(\alpha-\sum_{j=0}^{m-1}c_{i}\pi^{i})=c_{m}+\pi{h}\in K(C)\cap B=C$ for a suitable $h\in{B}$. We then have that $\overline{z}=\overline{c}_{m}\in\ell'$, a contradiction. We conclude that $C=B$. Note that this also implies that $n_{\alpha}:=[K(A)(\alpha):K(A)]=[\ell:k(x)]$, so that there are $n_{\alpha}$ embeddings of $K(A)(\alpha)$ into $K(A)^{\sep}$. The image of $\alpha$ under these embeddings are exactly the roots of $f(z)$ that are in the same $D_{x}$-orbit.

We now give an explicit criterion to see whether two roots are in the same $D_{x}$-orbit. 
We suppose without loss of generality that our set of representatives of the residue field $k(x)^{\sep}$ in $\mathcal{O}^{\sh}_{\mathcal{X},x}$ is invariant under $D_{x}$\footnote{This is automatic from the construction in \cref{sec:NPAlgorithm} if $K=k[t]_{(t)}$. If we are in the situation in \cref{rem:GeneralizationNumberFields}, then this can be done element-wise: we lift a single element, and then take the lifts of its conjugates to be the conjugates of the given lift.}. 
Since the action of $D_{x}$ on $\pi$ is trivial, we then have %
\begin{equation*}
\sigma(\alpha)=\sum_{i=0}^{\infty} \sigma(c_{i})\pi^{i}. 
\end{equation*}
In other words, we can calculate the Galois action by calculating it on the coefficients. 
Consider two roots $\alpha,\beta\in V(f(z))$ with power series expansions %
\begin{align*}
\alpha= \sum_{i=0}^{\infty} c_{i}\pi^{i},\\
\beta= \sum_{i=0}^{\infty} d_{i}\pi^{i}.
\end{align*}
We suppose that we have calculated the $r$-truncations of $\alpha$ and $\beta$ for some $r>0$, and we suppose that $r$ is sufficiently large, so that the $r$-truncations of all the roots are distinct. 
We then have that $\alpha$ and $\beta$ lie in the same $D_{x}$-orbit if and only if inductively, for $i\leq{r-1}$, the minimal polynomial of $\overline{c}_{i+1}$ over $k(x)(\overline{c}_{0},...,\overline{c}_{i})$ is the same as the minimal polynomial of $\overline{d}_{i+1}$ over $k(x)(\overline{d}_{0},...,\overline{d}_{i})$ under the isomorphism given by the polynomial in the previous step. This again follows from the equivalence of categories in \cite[\href{https://stacks.math.columbia.edu/tag/04GK}{Lemma 04GK}]{stacks-project} and standard Galois theory. 

\begin{remark}\label{rem:ResidueCurves}
Every point $x'\in S_{\mathcal{X}'}$ of codimension one lying over $x\in S_{\mathcal{X}}$ gives a morphism of smooth curves $C_{x'}\to C_{x}$ over the residue field $k$ of $K$. By the equivalence of categories between finite extensions of function fields and morphisms of smooth curves, we can recover $C_{x'}\to C_{x}$ from the extension of residue fields. By the above, we can recover this information from the algorithm, so that we can also recover the equations for the residue curves $C_{x'}$.    
\end{remark}
  
  We now discuss the remaining data in \cref{thm:MainThmv2}. We start with a lemma on the Galois action.
  
  \begin{lemma}\label{rem:KummerOrbitRemark}\label{lem:KummerOrbitRemark}
Let $\mathcal{X}'\to \mathcal{X}$ be a morphism of semistable models arising from \cref{pro:SimultaneousSemistable} and let $\overline{\mathcal{X}}\to \mathcal{X}'\to \mathcal{X}$ be the morphism of semistable models associated to the Galois closure of $K(X)\to K(X')$\footnote{Note that the Galois closure also satisfies all the necessary conditions, so that this is well defined. }. 
Let $x_{e}\in{S_{\mathcal{X}}}$ be an ordinary double point and let $x_{e}\in{S_{\mathcal{X}}}$ be an adjacent vertex with edge $s=x_{1}x_{e}$ in the Hasse diagram of $S_{\mathcal{X}}$. Write $\overline{s}=\overline{x}_{1}\overline{x}_{e}$ for a section of $s$ in $\overline{\mathcal{X}}$.  %
Then $D_{\overline{x}_{1}/x_{1}}=D_{\overline{s}/s}\subset D_{\overline{x}_{e}/x_{e}}$. Moreover, $D_{\overline{x}_{1}/x_{1}}$ is cyclic.   %
\end{lemma}
\begin{proof}
Suppose there exists a $\sigma\in D_{\overline{x}_{1}/x_{1}}$ that fixes $x_{1}$ but not $x_{e}$. Then $\sigma(\overline{x}_{e})$ is connected to the generic point of another component.  %
This contradicts the final part of \cref{pro:SimultaneousSemistable}, as we would obtain an ordinary double point in the quotient $\mathcal{X}$ contained in only one component. For the last part, we note that $f(z)$ splits completely over the strict Henselization of the regularization of a local chart by \cref{lem:KummerExtensions}. The extension induced by the regularization is a cyclic Kummer extension, so that we obtain the desired statement on the Galois group. 
\end{proof} 

  We retain the notation from \cref{rem:KummerOrbitRemark} for the ordinary double point $x_{e}$ with adjacent generic point $x_{1}$. %
  We now have to know the action of the individual $D_{\overline{x}_{1}/x_{1}}$ and $D_{\overline{x}_{e}/x_{e}}$  to reconstruct the inverse image of $S_{\mathcal{X}}$. The action of $D_{\overline{x}_{1}/x_{1}}$ was explained above, so we now explain how to find the action of $D_{\overline{x}_{e}/x_{e}}$.  
   \vspace{0.1cm}
Let $A_{e}=R[u,v]/(uv-\pi^{n})$ be the corresponding coordinate ring (see \cref{sec:ModelsP1}) %
with regularization $A_{e,\mathrm{reg}}=R[u_{1},v_{1}]/(u_{1}v_{1}-\pi)$ and map $A\to A_{\mathrm{reg}}$ given by $u\mapsto u_{1}^{n}$ and $v\mapsto v_{1}^{n}$. Then $f(z)$ splits completely over $L^{\sh}_{x_{e},\mathrm{reg}}$, which is the fraction field of the strict Henselization of $A_{e,\mathrm{reg}}$. As we saw earlier, the extension $L^{\sh}_{x_{e}}\subset L^{\sh}_{x_{e},\mathrm{reg}}$ is Galois with Galois group $\mathbb{Z}/n\mathbb{Z}$. To find the $D_{x_{e}}$-orbits of the roots, we calculate the action of this group on the approximations calculated by the algorithm. Let $x'_{e}\in \mathcal{X}'$ be a point lying over $x_{e}$ corresponding to the orbit of a root $\alpha_{i}$. The length or thickness of the corresponding edge (see \cite[Chapter 10, Definition 3.23]{liu2})  is then given by the length of $e$ (which is $n$), divided by $[L^{\sh}_{x_{e}}(\alpha_{i}):L^{\sh}_{x_{e}}]$. The latter in turn is the order of the orbit of $\alpha_{i}$ by basic field theory. 

\subsection{Examples}\label{sec:Examples}

We now show how the algorithms work in three examples. In the first and third example, we show how to calculate the covering of dual intersection graphs for coverings of the projective line of degrees four and three respectively. In the second example, we review the covering of elliptic curves in \cref{exa:SemistableModelSection} to show %
how the transfer maps in the $2$-limit work in terms of power series. To recover coverings of more general dual intersection graphs (for which the Hasse diagram of the base poset is not necessarily a tree), one has to apply similar monodromy techniques to recover the poset structure.  %

\begin{example}\label{exa:MainExample2}
Consider the polynomial 
\begin{align*}
f(z)& = (t^2+1)z^4+(t^4x+t^4)z^3+(t^{10}x^2+(t^{12}-2)x+t^{10})z^2+\\
& (t^{14}x^3+t^{16}x^2+t^{16}x+t^{14})z+
 (t^{22}x^4+t^{8}x^3+(1+t^6)x^2+t^8x+t^{22})
\end{align*} 
in $K(x)[z]$, where $K=\overline{\mathbb{Q}}(t)$\footnote{The computations automatically give the skeleton over the field of Puiseux series $\overline{\mathbb{Q}}\{\{t\}\}$ as well. We use the field $K$ here for computational simplicity, keeping in mind that it carries the standard $t$-adic valuation with $v(t)=1$. }. The extension $K(x)\subset{K(x)[z]/(f(z))}$ gives rise to a morphism of curves $\phi: X\to\mathbb{P}^{1}$ of degree four. 
The branch locus $D$ of $\phi$ consists of $12$ points. Let $\mathcal{Y}$ be the minimal semistable model of %
$(\mathbb{P}^{1},D)$. Using our algorithms, we then find that the associated covering of dual intersection graphs is as in \cref{fig:BExample}. 

\begin{figure}[h]
\scalebox{0.7}{

\begin{tikzpicture}[line cap=round,line join=round,>=triangle 45,x=1cm,y=1cm]
\clip(-0.21694666022106102,0.3379031991666796) rectangle (14.82865533183592,9.277554607003223);
\draw [line width=1.2pt] (2,2)-- (3,2);
\draw [line width=1.2pt] (3,2)-- (5,2);
\draw [line width=1.2pt] (5,2)-- (7,2);
\draw [line width=1.2pt] (7,2)-- (9,2);
\draw [line width=1.2pt] (9,2)-- (11,2);
\draw [line width=1.2pt] (11,2)-- (12,2);
\draw [line width=1.2pt] (2,2)-- (1.5,2.5);
\draw [line width=1.2pt] (2,2)-- (1.5,1.5);
\draw [line width=1.2pt] (3,2)-- (3,3);
\draw [line width=1.2pt] (3,2)-- (3,1);
\draw [line width=1.2pt] (3,2)-- (3.8,2.4);
\draw [line width=1.2pt] (5,2)-- (5,3);
\draw [line width=1.2pt] (12,2)-- (12.5,2.5);
\draw [line width=1.2pt] (12,2)-- (12.5,1.5);
\draw [line width=1.2pt] (11,2)-- (11,3);
\draw [line width=1.2pt] (11,2)-- (11,1);
\draw [line width=1.2pt] (11,2)-- (10.2,2.4);
\draw [line width=1.2pt] (9,2)-- (9,3);
\draw [line width=1.2pt] (2,8)-- (3,7);
\draw [line width=1.2pt] (3,7)-- (5,7);
\draw [line width=1.2pt] (9,7)-- (11,7);
\draw [line width=1.2pt] (11,7)-- (12,8);
\draw [line width=1.2pt] (11,7)-- (12,6);
\draw [line width=1.2pt] (3,7)-- (2,6);
\draw [line width=1.2pt] (2,8)-- (1.5,8.5);
\draw [line width=1.2pt] (2,8)-- (1.5,7.5);
\draw [line width=1.2pt] (2,6)-- (1.5,6.5);
\draw [line width=1.2pt] (2,6)-- (1.5,5.5);
\draw [line width=1.2pt] (12,8)-- (12.5,8.5);
\draw [line width=1.2pt] (12,8)-- (12.5,7.5);
\draw [line width=1.2pt] (12,6)-- (12.5,6.5);
\draw [line width=1.2pt] (12,6)-- (12.5,5.5);
\draw [line width=1.2pt] (3,7)-- (3,8);
\draw [line width=1.2pt] (3,7)-- (3.8,7.4);
\draw [line width=1.2pt] (3,7)-- (3,6);
\draw [line width=1.2pt] (5,7)-- (5,8);
\draw [line width=1.2pt] (5,7)-- (4.7,8.2);
\draw [line width=1.2pt] (5,7)-- (5.3,8.2);
\draw [line width=1.2pt] (9,7)-- (9,8);
\draw [line width=1.2pt] (9,7)-- (9.3,8.2);
\draw [line width=1.2pt] (9,7)-- (8.7,8.2);
\draw [line width=1.2pt] (2,8)-- (1.6,8.8);
\draw [line width=1.2pt] (2,8)-- (1.6,7.2);
\draw [shift={(7,5)},line width=1.2pt]  plot[domain=0.7853981633974483:2.356194490192345,variable=\t]({1*2.8284271247461903*cos(\t r)+0*2.8284271247461903*sin(\t r)},{0*2.8284271247461903*cos(\t r)+1*2.8284271247461903*sin(\t r)});
\draw [shift={(7,9)},line width=1.2pt]  plot[domain=3.9269908169872414:5.497787143782138,variable=\t]({1*2.8284271247461903*cos(\t r)+0*2.8284271247461903*sin(\t r)},{0*2.8284271247461903*cos(\t r)+1*2.8284271247461903*sin(\t r)});
\draw [line width=1.2pt] (3,7)-- (3.3,8.2);
\draw [line width=1.2pt] (3,7)-- (2.7,8.2);
\draw [line width=1.2pt] (3,7)-- (2.7,5.8);
\draw [line width=1.2pt] (3,7)-- (3.3,5.8);
\draw [line width=1.2pt] (3,7)-- (4,7.2);
\draw [line width=1.2pt] (3,7)-- (3.9,7.7);
\draw [line width=1.2pt] (11,7)-- (11,8);
\draw [line width=1.2pt] (11,7)-- (11.3,8.2);
\draw [line width=1.2pt] (11,7)-- (10.7,8.2);
\draw [line width=1.2pt] (11,7)-- (10.2,7.4);
\draw [line width=1.2pt] (11,7)-- (10.1,7.7);
\draw [line width=1.2pt] (11,7)-- (10,7.2);
\draw [line width=1.2pt] (11,7)-- (11,6);
\draw [line width=1.2pt] (11,7)-- (10.7,5.8);
\draw [line width=1.2pt] (11,7)-- (11.3,5.8);
\draw [line width=1.2pt] (12,8)-- (12.4,8.8);
\draw [line width=1.2pt] (12,8)-- (12.4,7.2);
\draw [->,line width=1.2pt,loosely dashed] (7,5) -- (7,3);
\draw (3.25,6.8) node[anchor=north west] {$1$};
\draw (10,6.8) node[anchor=north west] {$1$};
\begin{scriptsize}
\draw [fill=black] (2,2) circle (3pt);
\draw [fill=black] (3,2) circle (3pt);
\draw [fill=black] (5,2) circle (3pt);
\draw [fill=black] (7,2) circle (3pt);
\draw [fill=black] (9,2) circle (3pt);
\draw [fill=black] (11,2) circle (3pt);
\draw [fill=black] (12,2) circle (3pt);
\draw [fill=ududff] (1.5,2.5) circle (3pt);
\draw [fill=ududff] (1.5,1.5) circle (3pt);
\draw [fill=ududff] (3,3) circle (3pt);
\draw [fill=ududff] (3,1) circle (3pt);
\draw [fill=ududff] (3.8,2.4) circle (3pt);
\draw [fill=ududff] (5,3) circle (3pt);
\draw [fill=ududff] (12.5,2.5) circle (3pt);
\draw [fill=ududff] (12.5,1.5) circle (3pt);
\draw [fill=ududff] (11,3) circle (3pt);
\draw [fill=ududff] (11,1) circle (3pt);
\draw [fill=ududff] (10.2,2.4) circle (3pt);
\draw [fill=ududff] (9,3) circle (3pt);
\draw [fill=black] (2,8) circle (3pt);
\draw [fill=black] (3,7) circle (3pt);
\draw [fill=black] (5,7) circle (3pt);
\draw [fill=black] (9,7) circle (3pt);
\draw [fill=black] (11,7) circle (3pt);
\draw [fill=black] (12,8) circle (3pt);
\draw [fill=black] (12,6) circle (3pt);
\draw [fill=black] (2,6) circle (3pt);
\draw [fill=ududff] (1.5,8.5) circle (3pt);
\draw [fill=ududff] (1.5,7.5) circle (3pt);
\draw [fill=ffffff] (1.5,6.5) circle (2pt);
\draw [fill=ffffff] (1.5,5.5) circle (2pt);
\draw [fill=ududff] (12.5,8.5) circle (3pt);
\draw [fill=ududff] (12.5,7.5) circle (3pt);
\draw [fill=ffffff] (12.5,6.5) circle (2pt);
\draw [fill=ffffff] (12.5,5.5) circle (2pt);
\draw [fill=ududff] (3,8) circle (3pt);
\draw [fill=ududff] (3.8005858112108353,7.395112663219037) circle (3pt);
\draw [fill=ududff] (3,6) circle (3pt);
\draw [fill=ududff] (5,8) circle (3pt);
\draw [fill=ffffff] (4.7,8.2) circle (2pt);
\draw [fill=ffffff] (5.3,8.2) circle (2pt);
\draw [fill=ududff] (9,8) circle (3pt);
\draw [fill=ffffff] (9.3,8.2) circle (2pt);
\draw [fill=ffffff] (8.7,8.2) circle (2pt);
\draw [fill=white] (1.6,8.8) circle (2pt);
\draw [fill=ffffff] (1.6,7.2) circle (2pt);
\draw [fill=ffffff] (3.3,8.2) circle (2pt);
\draw [fill=ffffff] (2.7,8.2) circle (2pt);
\draw [fill=ffffff] (2.7,5.8) circle (2pt);
\draw [fill=ffffff] (3.3,5.8) circle (2pt);
\draw [fill=ffffff] (4,7.2) circle (2pt);
\draw [fill=ffffff] (3.9,7.7) circle (2pt);
\draw [fill=ududff] (11,8) circle (3pt);
\draw [fill=ffffff] (11.3,8.2) circle (2pt);
\draw [fill=ffffff] (10.7,8.2) circle (2pt);
\draw [fill=ududff] (10.2,7.4) circle (3pt);
\draw [fill=ffffff] (10.1,7.7) circle (2pt);
\draw [fill=ffffff] (10,7.2) circle (2pt);
\draw [fill=ududff] (11,6) circle (3pt);
\draw [fill=ffffff] (10.7,5.8) circle (2pt);
\draw [fill=ffffff] (11.3,5.8) circle (2pt);
\draw [fill=ffffff] (12.4,8.8) circle (2pt);
\draw [fill=ffffff] (12.4,7.2) circle (2pt);
\draw [fill=black] (7,6.17641) circle (3pt);
\draw [fill=black] (7,7.82) circle (3pt);
\end{scriptsize}
\end{tikzpicture}
}
\caption{\label{fig:BExample}The covering of dual intersection graphs in \cref{exa:MainExample2}. The blue vertices correspond to ramification or branch points of the covering. %
The $1$'s are the genera of the corresponding irreducible components of the special fiber.   %
 }
\end{figure}

We illustrate the calculations for the edge between $0$ and $4$. The corresponding algebra is $A_{0}=\overline{\mathbb{Q}}[u,v,t]/(uv-t^4)$, which we embed into $K(x)$ using the maps 
\begin{align*}
u&\mapsto x,\\
v&\mapsto t^{4}/x,\\
t&\mapsto t.
\end{align*} 
We take $x_{1}^4=u$ and $x_{2}^4=v$ to obtain the regular algebra $A={\overline{\mathbb{Q}}}[x_{1},x_{2}]$, which serves as a local smooth coordinate chart for $f(z)$. 
The polynomial $f(z)$ now coincides with the one studied in \cref{exa:MainExample1}, so we already have the desired power series expansions. 

We calculate the Galois orbits to find the edges and vertices lying over $\mathfrak{m}_{0}=(u,v,t)$, $\mathfrak{p}_{1}=(u,t)$ and $\mathfrak{p}_{2}=(v,t)$. %
The residue field for $\mathfrak{p}_{2}$ is %
$k(\mathfrak{p}_{2})=\overline{\mathbb{Q}}(x_{1}^4)$. %
The coefficients are defined over the degree-two extension $k(\mathfrak{p}_{2})\subset \overline{\mathbb{Q}}(x_{1}^2)$. We then easily find that the orbits are given by $\{\gamma_{1},\gamma_{4}\}$ and $\{\gamma_{2},\gamma_{3}\}$. There are thus two vertices lying over the given vertex. 

For $\mathfrak{p}_{1}$, we have that the residue field is $k(\mathfrak{p}_{1})=\overline{\mathbb{Q}}(x_{2}^4)$. We rewrite our coefficients in terms of $t$ and find that the $t^{2}$-coefficients are defined over the extension $k(\mathfrak{p}_{1})\subset \overline{\mathbb{Q}}(x_{2}^{2})=k'$. %
We now consider the element $\alpha_{1}\in{k(\mathfrak{p}_{1})^{\sep}}$, whose minimal polynomial over $k'$ is $4w^2+(x_{2}^2)^2+x_{2}^2$. Applying the non-trivial automorphism of $k'/k(\mathfrak{p}_{1})$, this is mapped to the minimal polynomial of the $\beta_{i}$ over $k'$. This implies that there is only one orbit: 
$\{\delta_{1},...,\delta_{4}\}$. We conclude that there is one vertex lying over this vertex. Note that we did not need to know the full local Galois group of $f(z)$ over $\mathfrak{p}_{1}$ here, which turns out to be the dihedral group $D_{4}$. %

For the edge, we calculate the action of $D_{\mathfrak{m}}$ on the $\mathfrak{m}$-adic power series. Note that this action is simply the one induced from the Kummer extension $A_{0}\to A$.  %
The two vertices corresponding to $\{\gamma_{1},\gamma_{4}\}$ and $\{\gamma_{2},\gamma_{3}\}$ give the two edges lying over the given edge. %
By repeating this procedure for all points in $S_{\mathcal{Y}}$, %
we then obtain the covering from \cref{fig:BExample}. 

\end{example}

\begin{example}\label{exa:ExplicitTransferMap}

We review \cref{exa:SemistableModelSection} to show how the transfer maps in the $2$-limit work in terms of power series. Recall that a local affine chart for $\mathcal{X}$ is given by $\Spec(R[u,v,y]/(y^2-(u+1)(v+1),uv-\pi))$.   %
Consider the subset $S=\{\mathfrak{m}_{1},\mathfrak{m}_{-1},\mathfrak{p}_{u},\mathfrak{p}_{v}\}\subset\mathcal{X}$. The Hasse diagram of the corresponding poset is a cycle. For the spanning tree, we take the four points in $S$, but without the edge $e=\mathfrak{m}_{-1}\mathfrak{p}_{u}$ (the specific edge will not play a significant role). We apply the algorithm from \cref{sec:NPAlgorithm} to $f(z)=z^2-(1+v)$: over $\mathfrak{p}_{v}$ we have the approximations 
\begin{align*}
\delta_{1}&=1,\\
\delta_{2}&=-1.
\end{align*}  
Over $\mathfrak{p}_{u}$, we have the approximations 
\begin{align*}
\gamma_{1}&=y,\\
\gamma_{2}&=-y.
\end{align*}

As in the algorithm of \cref{sec:NPAlgorithm}, we can now calculate the $\mathfrak{m}_{1}$-adic expansions of the roots using the $\delta_{i}$ and $\gamma_{i}$, which boils down to plugging in $y=1$ into our approximations. This connects $\delta_{i}$ to $\gamma_{i}$. We can also calculate the $\mathfrak{m}_{-1}$-adic expansions using $\delta_{i}$ and $\gamma_{i}$ by plugging in $y=-1$, which connects $\delta_{1}$ to $\gamma_{2}$ and $\delta_{2}$ to $\gamma_{1}$. This explicitly gives the transfer map, which is non-trivial here. The corresponding covering can be found in \cref{fig:CircleCovering}. %

\begin{figure}[ht]
\scalebox{0.35}{
\begin{tikzpicture}[line cap=round,line join=round,>=triangle 45,x=1cm,y=1cm]
\clip(3,2.1) rectangle (12,13.14);
\draw [shift={(7,2)},line width=2.0pt]  plot[domain=0.7853981633974483:2.356194490192345,variable=\t]({1*2.8284271247461903*cos(\t r)+0*2.8284271247461903*sin(\t r)},{0*2.8284271247461903*cos(\t r)+1*2.8284271247461903*sin(\t r)});
\draw [shift={(7,6)},line width=2.0pt]  plot[domain=3.9269908169872414:5.497787143782138,variable=\t]({1*2.8284271247461903*cos(\t r)+0*2.8284271247461903*sin(\t r)},{0*2.8284271247461903*cos(\t r)+1*2.8284271247461903*sin(\t r)});
\draw [line width=2.2pt] (5,12)-- (9,12);
\draw [line width=2.2pt,loosely dashed] (5,8)-- (9,12);
\draw [line width=2.2pt] (5,8)-- (9,8);
\draw [line width=2.2pt] (9,8)-- (5,12);
\draw [->,line width=2.2pt] (7,7.5) -- (7,5.44);
\begin{scriptsize}
\draw [fill=ududff] (9,4) circle (5.5pt);
\draw [fill=ududff] (5,4) circle (5.5pt);
\draw [fill=ududff] (5,12) circle (5.5pt);
\draw [fill=ududff] (9,12) circle (5.5pt);
\draw [fill=ududff] (5,8) circle (5.5pt);
\draw [fill=ududff] (9,8) circle (5.5pt);
\end{scriptsize}
\end{tikzpicture}
}
\caption{\label{fig:CircleCovering}
The covering $\Sigma'\to\Sigma$ of dual intersection graphs in \cref{exa:ExplicitTransferMap}.  We can locally calculate power series expansions of the roots of $f(z)$ over $\Sigma$, but not %
globally. %
}
\end{figure}
\end{example}

\begin{example}\label{exa:PlaneQuartic}
Consider the plane quartic $X/\mathbb{Q}_{p}$ for $p\geq{5}$ given by the polynomial
\begin{align*}
f&=xy(x+y+1)(x-y-1)+p+py\\
&=-xy^3 - 2xy^2 + (x^3 - x + p)y + p.
\end{align*}
The extension of function fields $\mathbb{Q}_{p}(x)\subset \mathbb{Q}_{p}(x)[y]/(f)$ gives rise to a covering $\phi: X\to\mathbb{P}^{1}$ of degree $3$, which in local coordinates is given by the projection $(x,y)\mapsto x$. The branch locus $D$ of $\phi$ is defined over an unramified extension of $K=\mathbb{Q}_{p}(p^{1/2})$, with simple ramification over every $P\in{D}$. 
The covering of dual intersection graphs can be found in \cref{fig:Covering2}. We note here that the local structure of the covering is not enough to determine the global structure, as there are at least two more ways of connecting the edges and vertices, see \cref{fig:Covering}.  

To use our algorithms on this example, we will do the following. Consider the curve $V(f)$ over $K=\mathbb{Q}(t)$ given by 
\begin{align*}
f&=xy(x+y+1)(x-y-1)+t^2+t^2y\\
&=-xy^3 - 2xy^2 + (x^3 - x + t^2)y + t^2.
\end{align*}
We will then work out the dual intersection graph with respect to the $(t)$-adic valuation on $K$. By taking $t\sim p^{1/2}$ and $p\geq{5}$, one easily sees that this also gives the dual intersection graph over $\mathbb{Q}_{p}(p^{1/2})$. The calculations can be found in \texttt{Example4.12.jl}. Implementing a fully $p$-adic version of the algorithms in this paper is the subject of ongoing work.

\label{rem:WorkAround}

 \begin{figure}[h]
\scalebox{0.5}{
\begin{tikzpicture}[line cap=round,line join=round,>=triangle 45,x=1cm,y=1cm]
\clip(-0.6,-0.98) rectangle (17.24,10.94);
\draw [line width=1.5pt] (6,2)-- (4,4);
\draw [line width=1.5pt] (6,2)-- (3,1);
\draw [line width=1.5pt] (6,2)-- (9,2);
\draw [line width=1.5pt] (9,2)-- (12,2);
\draw [line width=1.5pt] (6,10)-- (9,8);
\draw [line width=1.5pt] (6,8)-- (9,8);
\draw [line width=1.5pt] (9,8)-- (12,8);
\draw [line width=1.5pt] (9,8)-- (12,6);
\draw [line width=1.5pt] (12,6)-- (9,6);
\draw [line width=1.5pt] (9,6)-- (6,6);
\draw [line width=1.5pt] (6,10)-- (3,9);
\draw [line width=1.5pt] (2.98,6.84)-- (6,8);
\draw [line width=1.5pt] (6,6)-- (2.98,6.84);
\draw [line width=1.5pt] (6,10)-- (4,10);
\draw [line width=1.5pt,loosely dashed] (6,8)-- (4,10);
\draw [line width=1.5pt,loosely dashed] (6,6)-- (4,8);
\draw (8.75,9.25) node[anchor=north west] {\Large{$1$}};
\draw [->,line width=1.5pt] (7.48,4.98) -- (7.48,3);
\begin{scriptsize}
\draw [fill=qqqqff] (6,2) circle (5.0pt); %
\draw [fill=qqccqq] (4,4) circle (5.0pt);
\draw [fill=ffqqtt] (3,1) circle (5.0pt);  %
\draw [fill=qqqqff] (9,2) circle (5.0pt);  %
\draw [fill=qqqqff] (12,2) circle (5.0pt); %
\draw [fill=qqqqff] (6,10) circle (5.0pt); %
\draw [fill=qqqqff] (9,8) circle (5.0pt); %
\draw [fill=qqqqff] (6,8) circle (5.0pt); %
\draw [fill=qqqqff] (12,8) circle (5.0pt); %
\draw [fill=qqqqff] (12,6) circle (5.0pt); %
\draw [fill=qqqqff] (9,6) circle (5.0pt);
\draw [fill=qqqqff] (6,6) circle (5.0pt); %
\draw [fill=ffqqtt] (3,9) circle (5.0pt);  %
\draw [fill=ffqqtt] (3,6.84) circle (5.0pt); %
\draw [fill=qqccqq] (4,10) circle (5.0pt); %
\draw [fill=qqccqq] (4,8) circle (5.0pt);
\end{scriptsize}
\end{tikzpicture}
}
\caption{\label{fig:Covering2}The covering of dual intersection graphs in \cref{exa:PlaneQuartic}. The $1$ corresponds to the genus of the corresponding irreducible component. }
\end{figure}

We illustrate the calculations over the closed annulus $1/2\leq{v(x)}\leq 1$, corresponding to the rightmost edge in \cref{fig:Covering2}. The corresponding coordinate ring is $A=R[u,v]/(uv-t)$, where 
$u=x/t$ and $v=t^{2}/x$. We write $\mathfrak{p}_{1}=(u,t)=(u)$, $\mathfrak{p}_{2}=(v,t)=(v)$ and $\mathfrak{m}=(u,v,t)=(u,v)$. We note that the polynomial reduces to 
\begin{equation*}
f=-u^2v(z^3 + 2z^2 - zu^4v^2 - zv + z - v), 
\end{equation*} 
so that it is essentially the one in \cref{eq:MainHypersurface}. 

Applying the algorithm with respect to $\mathfrak{p}_{1}=(u)$, we find 
\begin{lstlisting}[basicstyle=\ttfamily\scriptsize, escapeinside={(*@}{@*)}]
julia> C[1][7]
2-element Vector{Any}:
 Any[QQMPolyRingElem[w1, 1], 0]
 Any[QQMPolyRingElem[-w1*v^2 -w1 - 2*v - 1], 4]
\end{lstlisting}
and 
\begin{lstlisting}[basicstyle=\ttfamily\scriptsize, escapeinside={(*@}{@*)}]
julia> C[2][7]
2-element Vector{Any}:
 Any[QQMPolyRingElem[-1 1], 0]
 Any[QQMPolyRingElem[v 1], 4]
\end{lstlisting}
Here $w_{1}^2 + w_{1} - v=0$. This relation can be obtained from \texttt{C[1][3]}. We write $\rho_{i}$ for the roots of this equation. Note that it defines a projective line over the residue field.   
From this data we obtain the $\mathfrak{p}_{1}$-adic power series expansions 
 \begin{align*}
    \gamma_{1,\mathfrak{p}_{1}}&=\rho_{1}+(\rho_{1}v^2)/(\rho_{1}+2v+1)u^4,\\%
    \gamma_{2,\mathfrak{p}_{1}}&=\rho_{2}+(\rho_{2}v^2)/(\rho_{2}+2v+1)u^4,\\ %
    \gamma_{3,\mathfrak{p}_{1}}&=-1+vu^4.%
\end{align*}
We now apply our algorithm again to the $\rho_{i}$ to find
\begin{lstlisting}[basicstyle=\ttfamily\scriptsize, escapeinside={(*@}{@*)}]
julia> C[1][7]
4-element Vector{Any}:
 Any[QQMPolyRingElem[1 1], 1]
 Any[QQMPolyRingElem[-1 1], 2]
 Any[QQMPolyRingElem[2 1], 3]
 Any[QQMPolyRingElem[-5 1], 4]
\end{lstlisting}
and 
\begin{lstlisting}[basicstyle=\ttfamily\scriptsize, escapeinside={(*@}{@*)}]
julia> C[2][7]
5-element Vector{Any}:
 Any[QQMPolyRingElem[-1 1], 0]
 Any[QQMPolyRingElem[-1 1], 1]
 Any[QQMPolyRingElem[1 1], 2]
 Any[QQMPolyRingElem[-2 1], 3]
 Any[QQMPolyRingElem[5 1], 4]
\end{lstlisting}
The power series expansions are thus 
\begin{align*}
\rho_{1}&=v-v^2+2v^3-5v^4+...\\
\rho_{2}&=-1-v+v^2-2v^3+5v^4+...
\end{align*}
Note that we could have switched the order of the labeling for the $\rho_{i}$. This corresponds to two different embeddings $A^{\sh}_{\mathfrak{m}}\to A^{\sh}_{\mathfrak{p}_{1}}$, as discussed in \cref{sec:StrictHenselizations1}. By combining these with our previous results and truncating, we then find the $\mathfrak{m}$-adic approximations 
\begin{align*}
    \gamma_{1,\mathfrak{p}_{1},\mathfrak{m}}&=(v-v^2+2v^3-5v^4)+(v^3-4v^4)u^4,\\
    \gamma_{2,\mathfrak{p}_{1},\mathfrak{m}}&=(-1-v+v^2-2v^3)+(-v-v^3+4v^4)u^4,\\
    \gamma_{3,\mathfrak{p}_{1},\mathfrak{m}}&=-1 +vu^4.
\end{align*}

We now apply the algorithm with respect to $\mathfrak{p}_{2}=(v)$. This yields 
\begin{lstlisting}[basicstyle=\ttfamily\scriptsize, escapeinside={(*@}{@*)}]
julia> C[1][7]
5-element Vector{Any}:
 Any[QQMPolyRingElem[-1 1], 0]
 Any[QQMPolyRingElem[w1, 1], 1]
 Any[QQMPolyRingElem[w1 - u^4 2*w1 + 1], 2]
\end{lstlisting}
and 
\begin{lstlisting}[basicstyle=\ttfamily\scriptsize, escapeinside={(*@}{@*)}]
julia> C[2][7]
4-element Vector{Any}:
 Any[QQMPolyRingElem[1 1], 1]
 Any[QQMPolyRingElem[-1 1], 2]
 Any[QQMPolyRingElem[u^4 + 2 1], 3]
 Any[QQMPolyRingElem[-4*u^4 - 5 1], 4].
\end{lstlisting}
Here $w_{1}^2 + w_{1} - u^4=0$. This relation can be found in \texttt{C[1][3]}. Note that this defines an elliptic curve over the residue field. We write $\beta_{i}$ for the roots of this equation. We then find the approximations %
\begin{align*}
\gamma_{1,\mathfrak{p}_{2}}&=-1+(\beta_{1})v+((\beta_{1}-u^4)/(2\beta_{1}+1))v^2,\\
\gamma_{2,\mathfrak{p}_{2}}&=-1+(\beta_{2})v+((\beta_{2}-u^4)/(2\beta_{2}+1))v^2,\\
\gamma_{3,\mathfrak{p}_{2}}&=v-v^2+(u^4+2)v^3-(4u^4+5)v^4.
\end{align*}
To calculate the $\mathfrak{m}$-adic power series expansions, we apply our algorithm to the $\beta_{i}$ and find 
\begin{lstlisting}[basicstyle=\ttfamily\scriptsize, escapeinside={(*@}{@*)}]
julia> C[1][7]
2-element Vector{Any}:
 Any[QQMPolyRingElem[1 1], 4]
 Any[QQMPolyRingElem[-1 1], 8]
\end{lstlisting}
and 
\begin{lstlisting}[basicstyle=\ttfamily\scriptsize, escapeinside={(*@}{@*)}]
 Any[QQMPolyRingElem[-1 1], 0]
 Any[QQMPolyRingElem[-1 1], 4]
 Any[QQMPolyRingElem[1 1], 8]
\end{lstlisting}
In other words, we have 
\begin{align*}
\beta_{1}&=u^4-u^8+...\\
\beta_{2}&=-1-u^4+u^8+...
\end{align*}
By expanding the $\beta_{i}$ and truncating, we find the $\mathfrak{m}$-adic approximations 
\begin{align*}
   \gamma_{1,\mathfrak{p}_{2},\mathfrak{m}}&=-1+(u^4-u^8)v,\\
   \gamma_{2,\mathfrak{p}_{2},\mathfrak{m}}&=-1+(-1-u^4)v,\\
   \gamma_{3,\mathfrak{p}_{2},\mathfrak{m}}&=v-v^2+(u^4+2)v^3.%
\end{align*}

We now compare the $\mathfrak{m}$-adic approximations $\gamma_{i,\mathfrak{p}_{1},\mathfrak{m}}$ and $\gamma_{i,\mathfrak{p}_{2},\mathfrak{m}}$. We find the following correspondence:
\begin{align*}
\gamma_{1,\mathfrak{p}_{1},\mathfrak{m}} &\Leftrightarrow \gamma_{3,\mathfrak{p}_{2},\mathfrak{m}}\\
\gamma_{2,\mathfrak{p}_{1},\mathfrak{m}} &\Leftrightarrow \gamma_{2,\mathfrak{p}_{2},\mathfrak{m}}\\
\gamma_{3,\mathfrak{p}_{1},\mathfrak{m}}& \Leftrightarrow \gamma_{1,\mathfrak{p}_{2},\mathfrak{m}}
\end{align*}

Note that the $D_{\mathfrak{p}_{1}}$-orbits of the roots are given by %
$\{\gamma_{1,\mathfrak{p}_{1}},\gamma_{2,\mathfrak{p}_{1}}\}$ and $\{\gamma_{3,\mathfrak{p}_{1}}\}$, and the $D_{\mathfrak{p}_{2}}$-orbits are $\{\gamma_{1,\mathfrak{p}_{2}},\gamma_{2,\mathfrak{p}_{2}}\}$ and $\{\gamma_{3,\mathfrak{p}_{2}}\}$. %
We thus find two vertices over $\mathfrak{p}_{1}$ and two vertices over $\mathfrak{p}_{2}$. Moreover, the $D_{\mathfrak{m}}$-orbits are trivial, so there are three points lying over $\mathfrak{m}$.  %
The correspondence between the $\gamma_{i,\mathfrak{p}_{1},\mathfrak{m}}$ and $\gamma_{i,\mathfrak{p}_{2},\mathfrak{m}}$ allows us to connect the different edges, which gives us the twist in \cref{fig:Covering2}. Note that the correspondence could also easily have been $\gamma_{i,\mathfrak{p}_{1},\mathfrak{m}}\Leftrightarrow\gamma_{i,\mathfrak{p}_{2},\mathfrak{m}}$, which would give a different covering and a different stable reduction type for $X$, see \cref{fig:Covering} for instance. 

\end{example}

\subsection{Interpretation in terms of analytic spaces}\label{sec:InterpretationAnalyticSpaces}

We now give a sketch of how our algorithm works in terms of analytic spaces in the sense of either \cite{FresnelPut2003} or \cite{Berkovich1993}. We will leave some of the details %
to future work. The reader can find similar computations for hyperelliptic coverings $X\to \mathbb{P}^{1}$ in \cite{KK2022}. %

Let $\Sigma$ be the minimal skeleton of $(\mathbb{P}^{1},D)$ with semistable model $\mathcal{Y}$ as in \cref{sec:ModelsP1}. We write %
$Y=\mathbb{P}^{1,\an}$ for the corresponding rigid analytic or Berkovich space. Let $\mathrm{red}:Y\to \mathcal{Y}_{s}$ be the anti-continous reduction map and let ${U}\subset \mathcal{Y}_{s}$ be an open affine. We then obtain an affinoid $Z_{{U}}=\mathrm{red}^{-1}(U)$ with algebra $\mathcal{A}_{U}$.   
We similarly write $Z_{\Gamma}$ %
for the inverse image of a closed subset $\Gamma$ of $\mathcal{Y}_{s}$ under the reduction map. If $U\subset \mathcal{Y}$, then we also write $Z_{U}$ for the affinoid associated to the inverse image of $U$ under $\mathcal{Y}_{s}\to \mathcal{Y}$.

Let $x_{e}\in \mathcal{Y}$ be an ordinary double point with local affine chart $U_{e}$ isomorphic to the spectrum of
$A_{e}=R[u,v]/(uv-\pi^{n})$. 
The affinoid algebra $\mathcal{A}_{U_{e}}$  of the affinoid $Z_{U_{e}}$ 
is then isomorphic to $K\langle u,v\rangle/(uv-\pi^{n})$. 
 Let
$\Gamma_{1}$ and $\Gamma_{2}$ be the two irreducible components of $\mathcal{Y}_{s}$ containing $x_{e}$. We have a commutative diagram of inclusions 
\begin{equation*}
\begin{tikzcd}
Z_{x_{e}} \arrow[d] \arrow[r] & {Z_{\Gamma_{2}}} \arrow[d] \\
{Z_{\Gamma_{1}}} \arrow[r]           & {Z_{U_{e}}}          
\end{tikzcd}.
\end{equation*}
Here the notation is taken relative to $U_{e}$, so that all of the above spaces are {open} subsets with respect to the valuation topology of the affinoid space $Z_{U_{e}}$. By localizing $A_{e}$, we can shrink $U_{e}$ and avoid %
the other intersection points and the reduction of the branch locus. We will again write $U_{e}$ for this open. %

We represent the covering $X^{\an}\to \mathbb{P}^{1,\an}$ using a polynomial $f(z)$ as before. We will assume for simplicity here that $f(z)$ is integral and monic over $Z_{U_{e}}$. Write $x_{i}\in\mathcal{Y}$ for the generic points corresponding to the $\Gamma_{i}$. Let %
\begin{equation*}
f(z)=\prod_{j=1}^{r_{i}} F_{j,i}(z)
\end{equation*} 
be an irreducible factorization of $f(z)$
over the Henselization $\mathcal{O}^{h}_{\mathcal{Y},x_{i}}$. Note that every irreducible factor corresponds  to an extension of $x_{i}$ by \cref{lem:RootsDecomposition}. By taking the $\pi$-adic completion and tensor product with $K$, we can view the $F_{j,i}(z)$ as elements of $\Gamma(U_{e},Z_{\Gamma_{i}})[z]$. Here $\Gamma(U_{e},Z_{\Gamma_{i}})$ is the ring of analytic functions on $Z_{\Gamma_{i}}$ relative to $U_{e}$. Note that the morphisms $X\to \mathbb{P}^{1}$ and $\mathcal{X}\to \mathcal{Y}$ are locally given as the spectrum of the normalization of an algebra $A[z]/(f(z))$, where $A$ is a local coordinate chart on $\mathbb{P}^{1}$ or $\mathcal{Y}$. We can thus describe $\phi^{-1}(Z_{\Gamma_{i}})$ as the normalization of %
\begin{equation*}
V_{\Gamma_{i}}=\{(u,y)\in Z_{\Gamma_{i}}\times K:f(y)=0\}
\end{equation*}
or its counterpart in $Z_{\Gamma_{i}}\times \mathbb{A}^{1,\an}$ for Berkovich spaces. Outside finitely many singular $\overline{K}$-rational points, this will be the disjoint union of the components 
\begin{equation*}
V_{j,i}=\{(u,y)\in Z_{\Gamma_{i}}\times K: F_{j,i}(y)=0\}.
\end{equation*} 
\begin{remark}
For computations, we do not need to work with explicit equations of the normalization of $V_{\Gamma_{i}}$ or $V_{j,i}$. %
Indeed, if this space of dimension one has a singular point, then we can adopt our power series algorithms to work at this point as well.  %
More explicitly, we can consider the projection of %
the corresponding point in $\mathcal{Y}$, and then add this point to our poset $S_{\mathcal{X}}$ (together with its reduction in $\mathcal{Y}_{s}$). This will suffice for most applications.  %
\end{remark} %

We now similarly consider an irreducible factorization
\begin{equation*}
f(z)=\prod_{j=1}^{r_{e}}G_{j}(z)
\end{equation*}
of $f(z)$ over the Henselization $\mathcal{O}^{h}_{\mathcal{Y},x_{e}}$. We can then also view the $G_{j}(z)$ as elements of $\Gamma(U_{e},Z_{x_{e}})[z]$, and each irreducible factor gives an extension of $x_{e}$. As before, we can describe the components over $Z_{x_{e}}$ as the %
union of the following spaces outside finitely many $\overline{K}$-rational points: %
\begin{equation*}
V_{j,e}=\{(u,y)\in Z_{x_{e}}\times K: G_{j}(y)=0\}.
\end{equation*} 

\begin{remark}
Note that the normalizations $N(V_{j,i})$ and $N(V_{j,e})$ are wide open spaces in the sense of \cite{Coleman1982}. Indeed, assuming that $f(z)$ is integral, %
the normalization of the zero set of $f(z)$ is $X$, and our construction of the morphism of semistable models $\mathcal{X}\to \mathcal{Y}$ in \cref{pro:SimultaneousSemistable} shows that the spaces $V_{j,i}$ and $V_{j,e}$ give a semistable open covering of $X$. One can also the results in \cite{ABBR2015} and \cite{Helminck2023} to conclude that these give the star-shaped curves corresponding to the covering.  To work with $p$-adic integrals on these wide open spaces, one should then find a generalization of the methods in \cite{BBRK2010} for hyperelliptic curves. We also refer the reader to %
\cite{KK2022} and \cite{Kaya2022} for more on these techniques. %
\end{remark}

\begin{remark}\label{rem:FurtherFactorization}
Note that every factor $F_{j,i}(z)$ is in fact an element of $\mathcal{O}^{h}_{\mathcal{Y},x_{e}}$.   
Indeed, this follows from the inclusion of decomposition groups in \cref{lem:KummerOrbitRemark}. %
The $F_{j,i}(z)$ thus decompose into irreducible factors $G_{j}(z)$ over $\mathcal{O}^{h}_{\mathcal{Y},x_{e}}$. This gives the relation between the $V_{j,e}$ and $V_{j,i}$ in terms of irreducible factors.     
\end{remark}

We now return to the algorithm outlined in \cref{sec:OutlineAlgorithm}. In this algorithm, we calculate approximations $\alpha_{i,n}$ of the roots of $f(z)$ over (a finite extension of) the Henselizations of various local rings $\mathcal{O}_{\mathcal{Y},x}$. We can now combine these approximations $\alpha_{i,n}$ to obtain approximations of the irreducible factors   %
$F_{j,i}$ and $G_{j}$ as follows. Consider %
the product %
$\prod (z-\alpha_{i,n})$\footnote{Note here that $f(z)$ is assumed to be monic for the sake of simplicity. One can also modify this approach to work for non-monic polynomials. }. By modifying the $\alpha_{i,n}$ slightly so that the set of approximations becomes Galois invariant, this polynomial $\prod (z-\alpha_{i,n})$ becomes an element of $\mathcal{O}^{h}_{\mathcal{Y},x_{e}}[z]$ or $\mathcal{O}^{h}_{\mathcal{Y},x_{i}}[z]$, thus giving the desired approximations. Moreover, from the set-up of our algorithm, it is now clear that we obtain the correspondences between the different $F_{j,i}$ and $G_{j}$ described in \cref{rem:FurtherFactorization}, so that we can glue the wide open spaces $V_{j,i}$ and $V_{j,e}$.    
\begin{example}\label{exa:PlaneQuarticCont}
We use the power series data in \cref{exa:PlaneQuartic} to give the irreducible factors mentioned above. Over $\mathfrak{p}_{1}$, note that $\rho_{1}+\rho_{2}=-1$ and $\rho_{1}\rho_{2}=-v$. We then obtain the following approximations of the irreducible factors of $f(z)$: 
\begin{align*}
\tilde{F}_{1,1}&=(y-\gamma_{1,\mathfrak{p}_{1}})(y-\gamma_{2,\mathfrak{p}_{1}})=y^2+c_{1}y+c_{2},\\
\tilde{F}_{2,1}&=y-\gamma_{3,\mathfrak{p}_{1}}=y+1-vu^4,
\end{align*}
where
\begin{align*}
c_{1}&=1+v^5,\\
c_{2}&=-v-v^2u^4-(v^4u^8/(4v+1)).
\end{align*}
Similarly, over $\mathfrak{p}_{2}$ we have $\beta_{1}+\beta_{2}=-1$ and $\beta_{1}\beta_{2}=-u^{4}$. We then obtain %
the approximations 
\begin{align*}
\tilde{F}_{1,2}&=(y-\gamma_{1,\mathfrak{p}_{2}})(y-\gamma_{2,\mathfrak{p}_{1}})=y^2+d_{1}y+d_{2},\\
\tilde{F}_{2,2}&=z-\gamma_{3,\mathfrak{p}_{1}}=y-v+v^2-(u^4+2)v^3,%
\end{align*}
where \begin{align*}
d_{1}&=2+v,\\
d_{2}&=1+v-u^4v^2.
\end{align*}
The polynomials $\tilde{F}_{1,1}$ and $F_{1,2}$ are reducible over the maximal ideal $\mathfrak{m}=(u,v,p^{1/2})=(u,v)$, and the local gluing maps between the local analytic spaces are given by comparing the different irreducible factors. 
\end{example}

\begin{center}

\bibliographystyle{alpha}
\bibliography{main}{}

\end{center}

\newpage 
\section{Algorithms}\label{sec:Algorithms2} 

In this section, we collect some of the smaller algorithms introduced in the main body of the text. We will use the notation in \cref{sec:MapsSH} and %
\cref{sec:NotationAlgorithms}. 

\subsection{The \'{e}tale routine}

The following function applies the \'{e}tale routine, as described in \cref{sec:EtaleRoutine}. This function can be found in \texttt{RetrieveNewPolynomialsEtale()}. For more background regarding the individual functions (such as \texttt{ScaledTranslation()}), we refer the reader to the documentation surrounding the corresponding functions in \texttt{Code1.jl}.   
\begin{algorithm}[h]
\caption{The \'{e}tale routine}\label{alg:DiscreteNPIteration}\label{alg:EtaleRoutine}
\begin{algorithmic}[1]
\Require Array \texttt{C} with a specified state vector \texttt{C[i]}. Write $f(z)$ for the current polynomial \texttt{C[i][1]}. This is assumed to be defined over a finite subalgebra of %
the strict Henselization $A^{\sh}$ of $A=K[x_{1},...,x_{n}]$  %
at $\mathfrak{m}=(x_{1},...,x_{n})$. Moreover, we assume it splits completely over the fraction field $K(A^{\sh})$.  
\Ensure  Array \texttt{C} where \texttt{C[i]} has been replaced by new state vectors whose current polynomials are $b^{n}_{i}f(z+a_{i}/b_{i})$ for projective roots 
$[a_{i}:b_{i}]$ of the reduction of $\overline{f}$. The new algebras %
are extensions of \texttt{C[i][2]} that contain the new projective roots. The projective roots have been added to \texttt{C[i][7]}.  
\State Compute $f_{0}=\text{red}(f(z))$ using \texttt{ReductionMap}(). %
\State Compute the irreducible factors $g_{i}$ of $f_{0}$ for $i=1,...,r$ %
over the polynomial ring \texttt{C[i][2]} with relations \texttt{C[i][3]} using \texttt{FactorDomain()}. 
\For{$i=1,...,r$}
	\If{$\mathrm{deg}_{z}(g_{i})=1$} 
		\State $g_{i}(z)=b_{i}z+a_{i}$.
		 \State $f_{i}(z)=b_{i}^{n}f(z+a_{i}/b_{i})$. (\texttt{ScaledTranslation()}) 
		\State Create a new state vector \texttt{w} using this data. 
	\Else \Comment{This implies that $\mathrm{deg}(g_{i}(z))>1$}
		\State Construct a polynomial ring with new coefficient $w_{i}$ for the $g_{i}$. (\texttt{AlgebraFromPolynomial()}) 
		\State Construct a new set of relations: union of \texttt{C[i][3]} and $g_{i}(w_{i})$. %
		\State $f_{i}(z)=f(z+w_{i})$. (\texttt{ScaledTranslation()})
		\State Reduce $f_{i}(z)$ using the polynomials from \texttt{C[i][3]} and $g_{i}$. 
		\State Create a new state vector \texttt{w} using this data, with \texttt{w[8]=tropical} as the current state of the algorithm.     
	\EndIf
\EndFor
\State Replace \texttt{C[i]} with the collection of all these state vectors \texttt{w}. %
\State \Return \texttt{C}.   %
\end{algorithmic}
\end{algorithm}

\subsection{The tropical routine}
We now describe the tropical routine from \cref{sec:TropicalRoutine}. Rather than replacing the approximations \texttt{C[i][7]} as in the \'{e}tale routine, this replaces the approximation height \texttt{C[i][6]}. As before, it does provide a new polynomial however. The corresponding function can be found in \texttt{RetrieveNewPolynomialsTropical()}. A similar function for the first initialization is called \texttt{InitialTropicalization()}. This calculates all of the tropical zeros, and not just the positive ones. It is only used when starting the algorithm. We will refrain from describing it here, since it is more or less the same as the algorithm below.     %

\begin{algorithm}[h]
\caption{The tropical routine}\label{alg:NewtonPolytope}\label{alg:TropicalRoutine}
\begin{algorithmic}[1]
\Require An array \texttt{C} with a specific state vector \texttt{C[i]} whose current polynomial \texttt{C[i][1]} splits completely over the strict Henselization of $A=K[x_{1},...,x_{n}]$ 
at $\mathfrak{m}=(x_{1},...,x_{n})$. We write $f(z)$ for this polynomial.    %
\Ensure An array \texttt{C} where the old state vector \texttt{C[i]} has been replaced by a set of new state vectors, one for each positive tropical root $k_{j}>0$ of $f(z)$. The current approximation heights \texttt{C[i][6]} have been increased using the tropical root $k_{j}$. The current polynomials \texttt{C[i][1]}have been scaled so that roots of valuation $k_{j}$ are now of valuation zero in the new current polynomials. %

\State Calculate the tropical roots $k_{j}>0$ of $f(z)$ using \texttt{TropicalRoots()}. Here $j=1,...,r$.
\For{$j=1,...,r$}
	\State Define a new approximation height:  \texttt{C[i][6]}$+k_{j}$.
	\State $g_{j}(z)=f(x_{n}^{k_{j}}z)$. 
	\State $c_{j}=\mathrm{cont}_{x_{n}}(g_{j})$.
	\State $f_{j}(z)=g_{j}/c_{j}$. (\texttt{ValuationScaling})
	\State Create a new state vector \texttt{w} using this data, with \texttt{w[8]=etale}. 
\EndFor
\State Replace \texttt{C[i]} with the collection of all these state vectors \texttt{w}. %
\State \Return \texttt{C}. 
\end{algorithmic}
\end{algorithm}

\end{document}